\documentclass[12p]{amsart}
\usepackage{amssymb}
\usepackage{amsmath}
\usepackage{amsfonts}
\usepackage{geometry}
\usepackage{graphicx}
\usepackage{mathrsfs,amssymb}
\usepackage{cancel}

\usepackage{hyperref}
\usepackage{cleveref}

\theoremstyle{plain}

\newtheorem{claim}{Claim}

\newtheorem{corollary}{Corollary}

\newtheorem{lemma}{Lemma}

\newtheorem{proposition}{Proposition}
\newtheorem{remark}{Remark}

\newtheorem{theorem}{Theorem}

\numberwithin{equation}{section}

\begin{document}
\title{Global well--posedness for the mass--critical nonlinear Schr{\"o}dinger equation on $\mathbb{T}$}

\author{Benjamin Dodson}
\date{\today}

\begin{abstract}
We prove a global well--posedness result for the quintic NLS on $\mathbb{T}$ for initial data in $H^{s}(\mathbb{T})$, $s > 1/3$. This improves the previous best bound of $s > 2/5$.
\end{abstract}

\maketitle

\section{Introduction}
In this paper we consider the Cauchy problem for the periodic nonlinear Schr{\"o}dinger equation:
\begin{equation}\label{1.1}
i \partial_{t} u + \partial_{xx} u - |u|^{4} u = 0, \qquad u : \mathbb{T} \times [0, T] \rightarrow \mathbb{C}, \qquad u(x, 0) = \phi(x) \in H^{s}(\mathbb{T}),
\end{equation}
where $\mathbb{T} = \mathbb{R} / \mathbb{Z}$ is the torus.\medskip

The quintic nonlinear Schr{\"o}dinger equation on $\mathbb{R}$ is called mass--critical, since the scaling
\begin{equation}\label{1.2}
u(t, x) \mapsto \lambda^{-1/2} u(x/\lambda, t/\lambda^{2}), \qquad \lambda > 0,
\end{equation}
leaves the solution to
\begin{equation}\label{1.3}
i \partial_{t} u + \partial_{xx} u - |u|^{4} u = 0, \qquad u : \mathbb{R} \times [0, T] \rightarrow \mathbb{C},
\end{equation}
invariant, as well as the mass $\int_{\mathbb{R}} |u(x, t)|^{2} dx$. Since the mass,
\begin{equation}\label{1.4}
M(u(t)) = \int |u(t, x)|^{2} dx = M(u(0)),
\end{equation}
is a conserved quantity, and $(\ref{1.3})$ also has a conserved energy
\begin{equation}\label{1.5}
E(u(t)) = \int [\frac{1}{2} |u_{x}(x, t)|^{2} + \frac{1}{6} |u(x,t)|^{6}] dx = E(u(0)),
\end{equation} 
that is coercive, $\| u(t) \|_{\dot{H}^{1}}^{2} \lesssim E(u(t))$, the initial value problem $(\ref{1.3})$ is globally well--posed and scattering for any $u(0) \in L^{2}$ (see \cite{dodson2019defocusing} for a complete description of the resolution of this issue).\medskip

The Cauchy problem on the torus, or periodic problem, $(\ref{1.1})$, possesses many common elements with the problem on $\mathbb{R}$, and thus the results for $(\ref{1.3})$ guide the intuition of what should be attainable for $(\ref{1.1})$. For example, equation $(\ref{1.1})$ also conserves the quantities mass,
\begin{equation}\label{1.6}
M(u(t)) = \int_{\mathbb{T}} |u(x, t)|^{2} dx = M(u(0)),
\end{equation}
and energy,
\begin{equation}\label{1.7}
E(u(t)) = \int_{\mathbb{T}} \frac{1}{2} |u_{x}(x, t)|^{2} + \frac{1}{6} |u(x, t)|^{6} dx = E(u(0)).
\end{equation}
It is also known that $(\ref{1.1})$ is locally well--posed in $H^{s}(\mathbb{T})$ for $s > 0$, see Proposition $5.73$ of \cite{bourgain1993fourier}. An immediate consequence of this result combined with $(\ref{1.7})$ is that $(\ref{1.1})$ is globally well--posed for $\phi \in H^{1}(\mathbb{T})$.\medskip

However, in examining the local well--posedness results on $\mathbb{T}$ and comparing them to local well--posedness results on $\mathbb{R}$, some important differences between the behavior of $(\ref{1.1})$ and $(\ref{1.3})$ become apparent. Observe that local (and global) well--posedness holds for $(\ref{1.3})$ when $\phi \in H^{s}(\mathbb{R})$ and $s \geq 0$, but local well--posedness holds for $(\ref{1.1})$ when $\phi \in H^{s}(\mathbb{T})$ and $s > 0$. This is probably not a mere technical issue that should be resolvable by improved analysis of the equation. Rather, the remarkable result of \cite{herr2026global} proved (among other things) that the mass--critical problem on $\mathbb{T}^{2}$,
\begin{equation}\label{1.8}
i u_{t} + \Delta u = |u|^{2} u, \qquad u(x, 0) \in H^{s}(\mathbb{T}^{2}),
\end{equation}
is ill--posed in $H^{0}(\mathbb{T}^{2}) = L^{2}(\mathbb{T}^{2})$. This is very likely to be true in $(\ref{1.1})$ as well.\medskip

Furthermore, \cite{herr2026global} also proved the remarkable result that $(\ref{1.8})$ is globally well--posed in $H^{s}(\mathbb{T}^{2})$ when $s > 0$. The proof uses the Strichartz estimates of \cite{herr2024strichartz} combined with a concentration compactness argument. Proving an analogous result for $(\ref{1.1})$, or for the mass--critical problem in higher dimensions $d > 3$ would be a very nice result. In this paper we make partial progress in this direction using the I--method.
\begin{theorem}\label{t1.1}
The Cauchy problem $(\ref{1.1})$ is globally well--posed in $H^{s}(\mathbb{T})$ for $s > \frac{1}{3}$.
\end{theorem}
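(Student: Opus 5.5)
We will use the I--method of Colliander--Keel--Staffilani--Takaoka--Tao, adapted to the periodic setting. Fix $N \gg 1$ and let $I = I_{N}$ be the Fourier multiplier with smooth symbol $m(\xi)$, where $m(\xi) = 1$ for $|\xi| \leq N$ and $m(\xi) = (N/|\xi|)^{1-s}$ for $|\xi| \geq 2N$. Then $\| I\phi \|_{H^{1}} \lesssim N^{1-s} \| \phi \|_{H^{s}}$ and $\| u \|_{H^{s}} \lesssim \| Iu \|_{H^{1}}$. Consider the modified energy $E(Iu(t))$. Since mass is conserved, the Gagliardo--Nirenberg inequality on $\mathbb{T}$ gives $E(I\phi) \lesssim N^{2(1-s)}$. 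Because the equation on $\mathbb{T}$ has no useful scaling, instead of rescaling the data we work on long time intervals. It suffices to show that $\sup_{t \in [0,T]} E(Iu(t)) \lesssim N^{2(1-s)}$ for $N = N(T)$ chosen appropriately, for every $T$. This bound controls $\| u(t) \|_{H^{s}}$ and rules out blowup, since the local theory of Proposition $5.73$ of \cite{bourgain1993fourier} gives a lifespan depending only on $\| u \|_{H^{s}}$ for $s > 0$.

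\textbf{Step 1 (modified local theory).} We prove a local well--posedness result for $Iu$ in Bourgain spaces $X^{1, 1/2+}$ on $\mathbb{T} \times [0, \delta]$, with $\delta \sim E(Iu(t_{0}))^{-\alpha}$ for a suitable power $\alpha$. The required bound is $\| Iu \|_{X^{1,1/2+}_{\delta}} \lesssim \| Iu(t_{0}) \|_{H^{1}}$. We get it from the periodic $L^{6}_{t,x}$ Strichartz estimate in $X^{0+, 1/2+}$ (Bourgain), combined with the fact that $I$ nearly commutes with the quintic nonlinearity through an interpolation lemma.

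\textbf{Step 2 (energy increment).} Computing $\frac{d}{dt} E(Iu)$ gives a six-linear and a ten-linear expression. Their symbols vanish unless some frequency exceeds $N$, and they are of the form $1 - \frac{m(\xi_{1})}{m(\xi_{2}) \cdots m(\xi_{6})}$ type differences. To gain more, we add a correction term in the spirit of the higher modified energies $E^{2}, E^{3}$, which symmetrizes and removes the non-resonant six-linear part by dividing by the phase $\sum \pm \xi_{j}^{2}$. On $\mathbb{T}$ the frequencies are integers, so resonances are exactly where this denominator vanishes. We treat the resonant set separately, using the fact that for six integers with $\sum \pm \xi_j = 0$ and $\sum \pm \xi_j^2 = 0$ the high frequencies must pair up, in which case the symbol cancels to second order. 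Multilinear $X^{s,b}$ estimates with the periodic bilinear and $L^{6}$ estimates, losing $N^{\epsilon}$, should then give an increment $\sup_{[t_{0}, t_{0}+\delta]} |E(Iu)(t) - E(Iu)(t_{0})| \lesssim N^{-\beta+} E(Iu(t_{0}))^{\gamma}$.

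\textbf{Step 3 (iteration and numerology).} Covering $[0,T]$ requires about $T\delta^{-1}$ intervals. Keeping $E(Iu) \lesssim N^{2(1-s)}$ throughout requires
\[
T \, N^{2\alpha(1-s)} N^{-\beta+} N^{2(1-s)\gamma} \lesssim N^{2(1-s)},
\]
which is solvable for large $N$ precisely when $s$ exceeds a threshold determined by $\alpha, \beta, \gamma$. We expect the target $s > 1/3$ to come from $\beta = 2$ after the correction term, versus $\beta = 1$ in the earlier $s > 2/5$ argument.

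\textbf{Main obstacle.} We expect the main difficulty to be Step 2, namely the decay $N^{-\beta}$ of the corrected increment. On $\mathbb{T}$ there is no bilinear smoothing estimate that gains $N^{-1/2}$ between separated frequencies, which on $\mathbb{R}$ drives much of the gain. So the gain must come from the algebraic structure, meaning the resonance analysis and the bound on the multiplier after dividing by the phase, together with careful counting of lattice points where $|\sum \pm \xi_j^{2}|$ is small. Our first approach is a dyadic decomposition in the two largest frequencies $N_{1} \geq N_{2} \gtrsim N$. We would bound the phase from below by roughly $N_{1} N_{2}$ off a thin set, and use $L^{6}$ Strichartz with $N^{0+}$ losses on the remaining factors.
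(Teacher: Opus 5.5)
\textbf{The numerology does not reach $s>1/3$, and the ingredient that would make it do so is missing.} Your Step~1 is a local theory in $X^{1,1/2+}$. By mass-critical scaling its lifespan is at best $\delta\sim E(Iu)^{-1}$, so $\alpha=1$ up to $N^{0+}$ losses. The same scaling forces the increment per such interval to have the form $N^{-\beta}E^{1+\beta/2}$, so $\gamma=1+\beta/2$. Substituting into your inequality gives $2(1-s)(\alpha+\gamma-1)<\beta$, that is, $s>2/(2+\beta)$. Hence $\beta=1$ gives $s>2/3$, and $\beta=2$ gives only $s>1/2$. The Li--Wu--Xu threshold $2/5$ corresponds to $\beta=3$ in this bookkeeping. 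Reaching $1/3$ by your route would need an $N^{-4+}$ increment per local interval, which one $E^{2}$-type correction does not deliver. Your calibration (``$\beta=2$ gives $1/3$, $\beta=1$ gave $2/5$'') is therefore off.

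What the paper adds is a long-time Strichartz estimate, Proposition~\ref{p3.2}. It rescales to $\lambda\mathbb{T}$ with $\lambda\sim N^{(1-s)/s}$ so that $E(Iu(0))=1/2$. It then views $u$ as a $\lambda$-periodic function on $\mathbb{R}$ and runs the Planchon--Vega interaction Morawetz argument with a cutoff $\chi$. The cutoff costs $O(\lambda^{-1})$ per unit time, which is harmless on intervals $|J|\le\lambda$. An induction on frequency, with Bourgain's $L^{6}$ estimate (and its $N^{\epsilon}$ loss) handling the quintic terms, then gives $O(1)$ bilinear and $U^{3}_{\Delta}$ bounds on intervals of length $\lambda$ rather than $1$. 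In your unrescaled bookkeeping this means $\alpha=1/2$ instead of $\alpha=1$, and then $\beta=2$ does give $3(1-s)<2$, i.e.\ $s>1/3$. Concretely, the paper shows the Li--Wu--Xu corrected energy has increment $N^{-2+}$ per interval of length $\lambda$, and $[0,\lambda^{2}T]$ contains only $\lambda T$ such intervals. So the bilinear smoothing you say is unavailable on $\mathbb{T}$ is exactly what the paper recovers on $\lambda\mathbb{T}$ over times $\lesssim\lambda$, and it is the decisive gain.

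Separately, your resonance claim is false for six frequencies. The triples $\{N,5N,6N\}$ and $\{2N,3N,7N\}$ have equal sums and equal sums of squares, so the high frequencies need not pair. Even with exactly two frequencies $\gtrsim N$, they need not cancel on the resonant set once $N_{3}^{\ast}\gtrsim (N_{1}^{\ast})^{1/2}$. The paper does not rely on cancellation there. It places only the region $\Omega$ where $|M_{6}^{1}|\lesssim|\alpha_{6}|$ into the correction, and estimates the remainder directly from $|M_{6}^{1}|\lesssim (N_{3}^{\ast})^{2}$ and the long-time bounds (Lemma~\ref{l4.3}).
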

Global well--posedness for $(\ref{1.1})$ with $\phi \in H^{s}(\mathbb{T})$, $s \geq 1$, was extended to $s > \frac{1}{2}$ by \cite{bourgain2004remark} using the normal form reduction combined with the I-method. The normal form reduction removes the strongly non--resonant part of the Hamiltonian and reduces the nonlinearity to its essential part. This result was subsequently extended to $s > \frac{4}{9}$ by \cite{de2007global} and then to $s > \frac{2}{5}$ by \cite{li2011global}. See \cite{schippa2024improved} and \cite{mcconnell2026lattice} for small data global well--posedness results for $(\ref{1.1})$. Recently, \cite{skouloudis2026strichartz} extended these small data results to prove global well--posedness for solutions to $(\ref{1.1})$ with $u_{0} \in H^{s}(\mathbb{T})$, $s > 0$, and $\| u_{0} \|_{L^{2}} \ll 1$ small.\medskip

Theorem $\ref{t1.1}$ is proved using the $I$--method. The $I$--method was developed in \cite{colliander2002refined} and \cite{colliander2003sharp}. See also \cite{cazenave2003semilinear}, \cite{cazenave1990cauchy}, \cite{colliander2001global}

Let $I_{N, s}$ be the Fourier multiplier
\begin{equation}\label{1.9}
\widehat{I_{N, s} u}(k) = m_{N, s}(k) \hat{u}(k),
\end{equation}
where $m_{N, s}(k)$ is a smooth, even, monotone function satisfying $0 < m_{N, s}(k) \leq 1$, and
\begin{equation}\label{1.10}
m_{N, s}(|k|) = \left\{
     \begin{array}{lr}
       1 & : \qquad |k| \leq N, \\
       N^{1 - s} |k|^{s - 1} & : \qquad |k| > 2N. \\
     \end{array}
   \right.
\end{equation}

The operator $I_{N, s} : H^{s}(\mathbb{T}) \rightarrow H^{1}(\mathbb{T})$ and there exists a positive constant $C$ such that
\begin{equation}\label{1.11}
C^{-1} \| u \|_{H^{s}} \leq \| I_{N, s} u \|_{H^{1}} \leq C N^{1 - s} \| u \|_{H^{s}}.
\end{equation}
Thus,
\begin{equation}\label{1.12}
E(I_{N, s} u)(0) \lesssim N^{2(1 - s)} \| u(0) \|_{H^{s}}^{2}.
\end{equation}
We rescale the solution to $(\ref{1.1})$ so that $E(I_{N, s} u)(0) = \frac{1}{2}$ using the scaling in $(\ref{1.2})$. Note that if $u$ is a solution to $(\ref{1.1})$ on $\mathbb{T}$, then the scaling in $(\ref{1.2})$ maps a solution on $\mathbb{T}$ to a solution on $\lambda \mathbb{T}$.\medskip

The second step is to prove a long time Strichartz estimate for a solution to $(\ref{1.1})$ on $\lambda \mathbb{T}$ with $E(I_{N, s} u(t)) \leq 1$. It will be convenient to suppress the $N$ and $s$ in $I$ and just say $E(Iu(t)) \leq 1$ for the rest of the paper, and to understand that $I$ is a function of $N$ and $s$. Long time Strichartz estimates give good bounds on the solution $u$ to $(\ref{1.1})$ at frequencies $\geq N^{\delta}$ for some $\delta(s) > 0$ small. This is done using the bilinear estimates of \cite{planchon2009bilinear} adapted to the torus. Combining the long time Strichartz estimates with the modified energy $E(Iu(t))$ proves global well--posedness of $(\ref{1.1})$ when $s > \frac{1}{2}$.\medskip

To extend the global well--posedness result to $s > \frac{1}{3}$, we use a resonant decomposition of the modified energy $E(Iu(t))$. The resonant decomposition was utilized in conjunction with the $I$--method on $\mathbb{R}^{2}$ in \cite{collianderresonant}. Here we use the resonant decomposition on $\mathbb{T}$ used in \cite{li2011global}. Specifically,
\begin{equation}\label{1.13}
E(Iu) = M_{2}(u) + M_{6}(u) = M_{2}(u) + \bar{M}_{6}(u) + \tilde{M}_{6}(u),
\end{equation}
where $M_{2}$ is a $2$--linear Fourier operator and $M_{6}$ is a $6$--linear Fourier operator. Adding a correction term to $(\ref{1.13})$ gives an energy $\tilde{E}(u)$ that is both close to $E(Iu(t))$ when $E(Iu(t)) \leq 1$ and that is much more slowly varying in time, due to the presence of the long time Strichartz estimates. As a result, it is possible to prove global well--posedness when $s > \frac{1}{3}$.

\begin{remark}
Throughout this paper, $P_{\geq N}$, $P_{\leq N}$, and $P_{N}$ are the standard Littlewood--Paley projection operators. That is, $P_{\geq N}$ is a projection to frequencies $|\xi| \gtrsim N$, $P_{\leq N}$ is a projection to frequencies $|\xi| \lesssim N$, and $P_{N}$ is a projection to frequencies $|\xi| \sim N$.
\end{remark}

\section{A global well--posedness result on $\mathbb{R}$}
In this section we prove a global well--posedness result for $(\ref{1.3})$ on $\mathbb{R}$. Specifically,
\begin{theorem}\label{t2.1}
The initial value problem
\begin{equation}\label{2.1}
i u_{t} + u_{xx} - |u|^{4} u = 0, \qquad u : \mathbb{R} \times [0, T] \rightarrow \mathbb{C}, \qquad u(x, 0) = \phi \in H^{s}(\mathbb{R}),
\end{equation}
is globally well--posed for $\phi \in H^{s}(\mathbb{R})$ for any $s > 0$.
\end{theorem}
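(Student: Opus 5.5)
The plan is to deduce Theorem $\ref{t2.1}$ from the $L^{2}$ theory on $\mathbb{R}$ quoted in the introduction (\cite{dodson2019defocusing}) by a persistence of regularity argument. Since $\phi \in H^{s}(\mathbb{R}) \subset L^{2}(\mathbb{R})$, the solution $u$ to $(\ref{2.1})$ is global. It also obeys a global spacetime bound
\begin{equation*}
\| u \|_{L^{6}_{t,x}(\mathbb{R} \times \mathbb{R})} \leq C(\| \phi \|_{L^{2}}) < \infty .
\end{equation*}
Standard Strichartz-based local theory, which on $\mathbb{R}$ works for every $s \geq 0$, gives local existence, uniqueness and continuous dependence in $H^{s}$. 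The lifespan is controlled by the $H^{s}$ norm, or more precisely by the smallness of $\| e^{it\partial_{xx}} u(t_{0}) \|_{L^{6}_{t,x}}$ on the interval. So it suffices to show that $\| u(t) \|_{H^{s}}$ stays finite, indeed bounded, on every finite time interval.

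Fix $\epsilon > 0$ small and partition $[0, \infty)$ into consecutive intervals $J_{1}, \dots, J_{K}$ with $\| u \|_{L^{6}_{t,x}(J_{k} \times \mathbb{R})} \leq \epsilon$. The number $K$ depends only on $\| \phi \|_{L^{2}}$, namely $K \lesssim (C(\| \phi \|_{L^{2}})/\epsilon)^{6}$. Negative times are handled the same way. Let $J_{k} = [t_{k}, t_{k+1}]$ and let $S(J_{k})$ denote the Strichartz norm over admissible pairs $(q,r)$, $\frac{2}{q} + \frac{1}{r} = \frac{1}{2}$. By the Strichartz estimate with dual pair $(6/5, 6/5)$,
\begin{equation*}
\| \langle \partial_{x} \rangle^{s} u \|_{S(J_{k})} \lesssim \| u(t_{k}) \|_{H^{s}} + \| \langle \partial_{x} \rangle^{s} (|u|^{4} u) \|_{L^{6/5}_{t,x}(J_{k} \times \mathbb{R})} .
\end{equation*}
The fractional chain/Leibniz rule applies because $z \mapsto |z|^{4} z$ is $C^{4}$ and $0 < s < 1$ may be assumed; larger $s$ is handled by differentiating the equation and iterating. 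It gives
\begin{equation*}
\| \langle \partial_{x} \rangle^{s} (|u|^{4} u) \|_{L^{6/5}_{t,x}} \lesssim \| u \|_{L^{6}_{t,x}}^{4} \| \langle \partial_{x} \rangle^{s} u \|_{L^{6}_{t,x}} .
\end{equation*}
Hence $\| \langle \partial_{x} \rangle^{s} u \|_{S(J_{k})} \leq C \| u(t_{k}) \|_{H^{s}} + C \epsilon^{4} \| \langle \partial_{x} \rangle^{s} u \|_{S(J_{k})}$. Choosing $\epsilon$ with $C \epsilon^{4} \leq \frac{1}{2}$ and using a continuity argument in the endpoint of $J_{k}$, we get $\| u(t_{k+1}) \|_{H^{s}} \leq 2C \| u(t_{k}) \|_{H^{s}}$.

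Iterating over the $K$ intervals yields
\begin{equation*}
\sup_{t \in \mathbb{R}} \| u(t) \|_{H^{s}} \leq (2C)^{K} \| \phi \|_{H^{s}} .
\end{equation*}
This bound, together with the $H^{s}$ local theory and blow-up alternative, proves global existence in $H^{s}$. Continuous dependence follows from the same interval-by-interval argument applied to differences of solutions, using the stability theory for the $L^{2}$-critical problem. The only genuinely deep input is the global $L^{6}_{t,x}$ bound from the $L^{2}$ result, which we take as given. The step that needs care is the continuity argument in the endpoint of each $J_{k}$: the a priori $H^{s}$-Strichartz norm must be finite for the absorption to be legitimate. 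This is handled by working first on short subintervals furnished by the $H^{s}$ local theory and extending.
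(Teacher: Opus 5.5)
Your argument is correct, but it is not the paper's route. You use the $L^{2}$ global well-posedness and scattering theorem on $\mathbb{R}$ as a black box and upgrade it by persistence of regularity. The global bound $\| u \|_{L^{6}_{t,x}} \leq C(\| \phi \|_{L^{2}})$ splits time into $K = K(\| \phi \|_{L^{2}})$ intervals of small $L^{6}_{t,x}$ norm. On each interval, the $H^{s}$-level Strichartz estimate and the fractional chain rule let you absorb the nonlinearity, and you correctly handle the a priori finiteness needed for that absorption.

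The paper avoids that theorem on purpose, while acknowledging it is stronger. It runs the $I$-method: it rescales so that $E(Iu_{\lambda}(0)) = \frac{1}{2}$ and bootstraps $E(Iu_{\lambda}(t)) \leq 1$. The energy increment is controlled by the long time Strichartz estimates of Theorem \ref{t2.4}. These are proved by induction on frequency from a frequency-localized Planchon--Vega bilinear interaction Morawetz estimate.

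Your route is much shorter, and it gives a uniform-in-time bound $\sup_{t} \| u(t) \|_{H^{s}} \lesssim_{\| \phi \|_{H^{s}}} 1$, better than the polynomial growth in $(\ref{2.11})$. However, its one deep input is a global $L^{6}_{t,x}$ bound from concentration compactness, and this has no counterpart on $\mathbb{T}$. There, $L^{2}$ well-posedness and scattering are expected to fail, and even the linear $L^{6}_{t,x}$ Strichartz estimate carries a loss.

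The paper's proof is meant as a warm-up whose ingredients carry over to $\lambda \mathbb{T}$ in Section 3: the modified energy, the bilinear Morawetz estimate, and induction on frequency. That transferability is what the paper's argument offers over yours.
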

The proof of Theorem $\ref{t2.1}$ will utilize the I--method. Before proving this result, it is necessary to observe that the result in \cite{dodson2016global} is in fact stronger, there global well--posedness and scattering are proved for $(\ref{2.1})$ when $u_{0} \in L^{2}(\mathbb{R})$. However, there are a few reasons why it is useful to prove Theorem $\ref{t2.1}$ here. The first is that, as was discussed in the introduction, global well--posedness for $(\ref{1.1})$ almost certainly fails to hold for $\phi \in L^{2}(\mathbb{T})$. Scattering also almost certainly fails to hold for $\phi \in H^{s}(\mathbb{T})$, $s > 0$. So while Theorem $\ref{t2.1}$ is less than the optimal result on $\mathbb{R}$, if such a result could be proved on $\mathbb{T}$, it likely would be the optimal result in that case. The second reason is that Theorem $\ref{t2.1}$ will be proved using the I--method, rather than following the concentration compactness argument in \cite{dodson2016global} or \cite{herr2024strichartz}. Thus, Theorem $\ref{t2.1}$ provides a good warm--up for the I--method argument on $\mathbb{T}$.\medskip

\begin{proof}[Proof of Theorem $\ref{t2.1}$]
The proof of Theorem $\ref{t2.1}$ relies on long time Strichartz estimates and the bilinear interaction Morawetz estimate of \cite{planchon2009bilinear}.

\begin{theorem}[Bilinear estimate]\label{t2.2}
Let $u$ and $v$ be linear solutions to $i u_{t} + u_{xx} = 0$ and $i v_{t} + v_{xx} = 0$, respectively. Then,
\begin{equation}\label{2.2}
\int |\partial_{x}(u \bar{v})|^{2} dx dt \lesssim \| u \|_{L_{t}^{\infty} L_{x}^{2}}^{2} \| v \|_{L_{t}^{\infty} L_{x}^{2}} \| v \|_{L_{t}^{\infty} \dot{H}^{1}} +  \| v \|_{L_{t}^{\infty} L_{x}^{2}}^{2} \| u \|_{L_{t}^{\infty} L_{x}^{2}} \| u \|_{L_{t}^{\infty} \dot{H}^{1}}.
\end{equation}
\end{theorem}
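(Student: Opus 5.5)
The plan is an interaction-Morawetz argument built directly on the bilinear density $\rho = u\bar v$, so that the quantity controlled at the end is $|\partial_x(u\bar v)|^2$ itself and not a Wronskian. Throughout, $u_t = i u_{xx}$ and $\bar v_t = -i\bar v_{xx}$.

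\emph{Step 1: conservation laws for $\rho$.} A direct computation should give
\begin{equation*}
\partial_t \rho = \partial_x j, \qquad j = i(u_x\bar v - u\bar v_x).
\end{equation*}
Differentiating once more and using both equations, I expect
\begin{equation*}
\partial_t j = \partial_x\big(-2u_x\bar v_x + \tfrac12\partial_x^2\rho\big),
\end{equation*}
possibly up to constants and signs, which I would fix by direct expansion. These are the bilinear analogues of the mass and momentum laws for $|u|^2$. They hold for two different linear solutions because each of $u$ and $\bar v$ satisfies its own (conjugate) free equation.

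\emph{Step 2: the Morawetz functional.} Consider
\begin{equation*}
M(t) = \int\!\!\int \operatorname{sgn}(x-y)\, \operatorname{Re}\big(\overline{\rho(t,y)}\, j(t,x)\big)\,dx\,dy .
\end{equation*}
Since $|\rho| \le |u||v|$ and $|j| \le |u_x||v| + |u||v_x|$, Cauchy--Schwarz gives
\begin{equation*}
|M(t)| \le \|\rho\|_{L^1}\|j\|_{L^1} \lesssim \|u\|_{L^2}\|v\|_{L^2}\big(\|u\|_{\dot H^1}\|v\|_{L^2} + \|u\|_{L^2}\|v\|_{\dot H^1}\big).
\end{equation*}
This is exactly the right-hand side of (\ref{2.2}). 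Hence integrating $\frac{d}{dt}M$ over the time interval bounds $\int |\partial_t M|\,dt$ by twice that quantity, and in fact gives a bound uniform in $T$.

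\emph{Step 3: computing $\frac{d}{dt}M$.}
\begin{itemize}
\item In the term where the time derivative falls on $j(x)$, substitute the flux from Step 1 and integrate by parts in $x$. Since $\partial_x\operatorname{sgn}(x-y) = 2\delta_{x=y}$, this produces a diagonal term $\int \operatorname{Re}\big(\bar\rho\,(2u_x\bar v_x - \tfrac12\rho_{xx})\big)\,dx$. After one more integration by parts this becomes $\tfrac12\int|\rho_x|^2\,dx$ plus a term of the form $2\operatorname{Re}\int \bar u\,v\,u_x\bar v_x\,dx$.
\item In the term where the time derivative falls on $\bar\rho(y)$, substitute $\partial_t\bar\rho = \partial_y\bar j$ and integrate by parts in $y$. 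This gives a diagonal contribution of the form $\int |j|^2\,dx$, with sign to be checked. The same step also produces the cross term $\operatorname{Re}\int \bar j\, j$, the analogue of the ``momentum squared'' term in the classical Morawetz identity.
\end{itemize}

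\emph{Step 4: positivity (the main obstacle).} I then have to combine these diagonal terms into a quadratic form in $(u_x\bar v,\, u\bar v_x)$ and show that it is $\ge c\,|u_x\bar v + u\bar v_x|^2$. Writing $A = u_x\bar v$ and $B = u\bar v_x$, we have
\begin{equation*}
|\rho_x|^2 = |A+B|^2, \qquad |j|^2 = |A-B|^2, \qquad \bar u\,v\,u_x\bar v_x = A\bar B .
\end{equation*}
So the whole diagonal expression is a Hermitian form in $(A,B)$.
\begin{itemize}
\item If, as in the single-function case where $\rho = |u|^2$ and $j$ is real, the momentum terms cancel up to a nonnegative remainder, the form reduces to $c|A+B|^2$ and we are done.
\item If the signs do not cooperate, I would first try the symmetrized functional $M + \widetilde M$, where $\widetilde M$ uses $\operatorname{Re}(\bar j(y)\rho(x))$. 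Failing that, I would modify the weight, e.g.\ use the real and imaginary parts of $\rho$ separately.
\end{itemize}
This sign and coefficient bookkeeping is where I expect the real difficulty. Once positivity is established, integrating $\frac{d}{dt}M \ge c\int|\partial_x(u\bar v)|^2\,dx$ in time and applying the bound of Step 2 yields (\ref{2.2}).
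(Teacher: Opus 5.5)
There is a genuine gap. It sits in the choice of functional, not in the sign bookkeeping of Step 4: the positivity you hope for is false. Carrying out your Steps 1--3 exactly gives $\partial_t j = \partial_x(4u_x\bar v_x - \rho_{xx})$, and then
\[
\frac{d}{dt}M = 2\int |j|^2\,dx - 2\int |\rho_x|^2\,dx - 8\operatorname{Re}\int \bar\rho\, u_x \bar v_x\,dx .
\]
The first two terms give $-8\operatorname{Re}(A\bar B) = -8\operatorname{Re}[(\bar u u_x)(\bar v v_x)]$. The last term, however, is $-8\operatorname{Re}[(\bar u u_x)(v \bar v_x)]$. Your identification $\bar u v u_x \bar v_x = A\bar B$ conjugates the wrong $v$-factor, so the diagonal expression is not a Hermitian form in $(A,B)$ at all. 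Adding the two and using $\bar v v_x + v\bar v_x = \partial_x|v|^2$, the integrand collapses pointwise:
\[
\frac{d}{dt}M = -4\int \partial_x(|u|^2)\,\partial_x(|v|^2)\,dx .
\]
This is indefinite, and, worse, not coercive. Suppose at some time $|v|$ is constant on the support of $u$, say $v = \psi e^{ikx}$ with $\psi \equiv 1$ there. Then the right side vanishes, while $\int|\partial_x(u\bar v)|^2 = \int |u_x - iku|^2 > 0$. For $u=v$ the formula reduces to $-4\int|\partial_x|u|^2|^2$, which is why the single-function analogy looked promising. Your fallbacks do not rescue it either: relabeling $x \leftrightarrow y$ shows $\widetilde M = -M$, so $M + \widetilde M \equiv 0$.

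The missing idea is the other polarization of the interaction Morawetz functional, which the paper uses following Planchon--Vega. You pair the mass density of one solution with the momentum density of the other:
\[
M(t) = \int\!\!\int |u(y)|^2 \operatorname{sgn}(x-y) \operatorname{Im}(\bar v v_x)(x)\,dx\,dy + \int\!\!\int |v(y)|^2 \operatorname{sgn}(x-y) \operatorname{Im}(\bar u u_x)(x)\,dx\,dy .
\]
Use $\partial_t |u|^2 = -2\partial_x \operatorname{Im}(\bar u u_x)$ and $\partial_t \operatorname{Im}(\bar v v_x) = \partial_x(\tfrac12 \partial_x^2|v|^2 - 2|v_x|^2)$. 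The derivative is then
\[
4\int (|u_x|^2|v|^2 + |u|^2|v_x|^2) + 8\int[\operatorname{Re}(\bar u u_x)\operatorname{Re}(\bar v v_x) - \operatorname{Im}(\bar u u_x)\operatorname{Im}(\bar v v_x)],
\]
which is exactly $4\int |\partial_x(u\bar v)|^2$. Moreover $|M(t)|$ is bounded by the right side of (\ref{2.2}) just as in your Step 2, so the fundamental theorem of calculus finishes the proof. In fact, polarizing the single-function identity with $w = u + e^{i\theta}v$ and averaging over $\theta$ gives
\[
\frac{d}{dt}\big(M_{\mathrm{paper}} - M_{\mathrm{yours}}\big) = 4\int \partial_x|u|^2\,\partial_x|v|^2 + 4\int|\rho_x|^2 .
\]
So your functional is the non-coercive half of that bilinear piece, and all the coercivity lives in the paper's half.
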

\begin{proof}
The proof follows the proof of \cite{planchon2009bilinear}. Define the Morawetz potential
\begin{equation}\label{2.3}
M(t) = \int |u(t, y)|^{2} \frac{(x - y)}{|x - y|} Im[\bar{v} \partial_{x} v](t, x) dx dy + \int |v(t, y)|^{2} \frac{(x - y)}{|x - y|} Im[\bar{u} \partial_{x} u](t, x) dx dy.
\end{equation}
By direct computation,
\begin{equation}\label{2.4}
\frac{d}{dt} M(t) = 4 \int |\partial_{x}(u \bar{v})|^{2} dx.
\end{equation}
Then $(\ref{2.2})$ follows from the fundamental theorem of calculus.
\end{proof}

\begin{corollary}\label{c2.3}
If $u$ and $v$ are solutions to $i u_{t} + u_{xx} = 0$ and $i v_{t} + v_{xx} = 0$ respectively, and $u$ is supported on $|\xi| \sim N$, $v$ is supported on $|\xi| \sim M$, $M \ll N$, then
\begin{equation}\label{2.5}
\| u \bar{v} \|_{L_{t,x}^{2}(\mathbb{R} \times \mathbb{R})}^{2} \lesssim \frac{1}{N} \| u_{0} \|_{L^{2}}^{2} \| v_{0} \|_{L^{2}}^{2}.
\end{equation}
Since $\| uv \|_{L_{t,x}^{2}}^{2} = \| uv \bar{u} \bar{v} \|_{L_{t,x}^{1}}$,
\begin{equation}\label{2.6}
\| uv \|_{L_{t,x}^{2}(\mathbb{R} \times \mathbb{R})}^{2} \lesssim \frac{1}{N} \| u_{0} \|_{L^{2}}^{2} \| v_{0} \|_{L^{2}}^{2},
\end{equation}
as well.
\end{corollary}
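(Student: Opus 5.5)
The plan is to derive Corollary \ref{c2.3} directly from Theorem \ref{t2.2}, applied to the pair $(u, v)$ with $u$ at frequency $N$ and $v$ at frequency $M \ll N$. The work has two parts: bound the right hand side of $(\ref{2.2})$ using the frequency localization, and bound the left hand side from below by $N^{2} \| u \bar{v} \|_{L_{t,x}^{2}}^{2}$.

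For the right hand side, mass conservation for linear solutions gives $\| u \|_{L_{t}^{\infty} L_{x}^{2}} = \| u_{0} \|_{L^{2}}$, and similarly for $v$. Frequency localization then gives $\| u \|_{L_{t}^{\infty} \dot{H}^{1}} \sim N \| u_{0} \|_{L^{2}}$ and $\| v \|_{L_{t}^{\infty} \dot{H}^{1}} \sim M \| v_{0} \|_{L^{2}}$. Since $M \ll N$, the right hand side of $(\ref{2.2})$ is therefore $\lesssim N \| u_{0} \|_{L^{2}}^{2} \| v_{0} \|_{L^{2}}^{2}$.

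For the left hand side, take the Fourier transform in $x$. The product $u \bar{v}$ has spatial frequency $\xi_{1} - \xi_{2}$ with $|\xi_{1}| \sim N$ and $|\xi_{2}| \sim M \ll N$, so $|\xi_{1} - \xi_{2}| \sim N$. If the ratio $M/N$ is small enough, $|\xi_{1} - \xi_{2}| \geq N/2$ on the support. For each fixed $t$, Plancherel then gives $\| \partial_{x}(u \bar{v})(t) \|_{L_{x}^{2}}^{2} \gtrsim N^{2} \| u \bar{v}(t) \|_{L_{x}^{2}}^{2}$. Integrating in $t$ and combining with the previous bound gives $N^{2} \| u \bar{v} \|_{L_{t,x}^{2}}^{2} \lesssim N \| u_{0} \|_{L^{2}}^{2} \| v_{0} \|_{L^{2}}^{2}$, which is $(\ref{2.5})$. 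The only real obstacle is making $M \ll N$ precise so that the frequency support of $u \bar{v}$ stays away from the origin. If the implicit constant is not small enough, I would split $v$ into finitely many Littlewood--Paley pieces and use the triangle inequality, which costs only a constant.

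For $(\ref{2.6})$, the pointwise identity $|u v|^{2} = u v \bar{u} \bar{v} = |u \bar{v}|^{2}$ gives $\| uv \|_{L_{t,x}^{2}} = \| u \bar{v} \|_{L_{t,x}^{2}}$, so $(\ref{2.6})$ follows immediately from $(\ref{2.5})$. Equivalently, one can apply the result above to $\bar{v}(t,x)$ replaced by the linear solution $\overline{v(-t,x)}$, which is supported at frequency $\sim M$ as well.
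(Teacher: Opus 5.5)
Your argument is correct and matches the derivation the paper intends (it gives no written proof beyond the remark about $\| uv \|_{L^{2}_{t,x}}$): apply Theorem \ref{t2.2}, bound its right side by $\lesssim (M + N) \| u_{0} \|_{L^{2}}^{2} \| v_{0} \|_{L^{2}}^{2} \lesssim N \| u_{0} \|_{L^{2}}^{2} \| v_{0} \|_{L^{2}}^{2}$, bound its left side below by $N^{2} \| u \bar{v} \|_{L^{2}_{t,x}}^{2}$ using the frequency separation $|\xi_{1} - \xi_{2}| \gtrsim N$, and obtain $(\ref{2.6})$ from the pointwise identity $|uv| = |u \bar{v}|$.

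Drop your two asides, because neither works. Replacing $v$ by the solution $\overline{v(-t,x)}$ controls $u(t,x) v(-t,x)$ rather than $u(t,x) v(t,x)$. Splitting $v$ into Littlewood--Paley pieces cannot create frequency separation from $u$ that is not already there. Instead, read $M \ll N$ as $M \leq cN$ with $c$ small relative to the constants in $|\xi| \sim N$.
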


Now, modifying $(\ref{1.1})$ to the continuous case, define the $I$--operator $I : H^{s}(\mathbb{R}) \rightarrow H^{1}(\mathbb{R})$ to be the Fourier multiplier
\begin{equation}\label{2.7}
\widehat{If}(\xi) = m(\xi) \hat{f}(\xi), \qquad m(\xi) = 1, \qquad \text{if} \qquad |\xi| \leq N, \qquad m(\xi) = |\xi|^{s - 1} N^{1 - s}, \qquad \text{if} \qquad |\xi| > 2N.
\end{equation}
By direct computation,
\begin{equation}\label{2.8}
\aligned
\frac{d}{dt} E(Iu(t)) = \langle \partial_{x} Iu_{t}, \partial_{x} Iu \rangle + \langle Iu_{t}, |Iu|^{4} (Iu) \rangle \\
= -\langle Iu_{t}, \Delta Iu \rangle + \langle Iu_{t}, |Iu|^{4} (Iu) \rangle \\
= \langle Iu_{t}, -\Delta Iu + I(|u|^{4} u) \rangle + \langle Iu_{t}, |Iu|^{4} (Iu) - I(|u|^{4} u) \rangle \\
= \langle Iu_{t}, |Iu|^{4} (Iu) - I(|u|^{4} u) \rangle \\ 
= \langle I(i \Delta u - i |u|^{4} u), |Iu|^{4} (Iu) - I(|u|^{4} u) \rangle.
\endaligned
\end{equation}
It is convenient to rescale the initial data so that $E(Iu_{0}) = \frac{1}{2}$ using the rescaling in $(\ref{1.2})$. Under the rescaling in $(\ref{1.2})$,
\begin{equation}\label{2.9}
\| u_{\lambda}(0) \|_{\dot{H}^{s}} = \lambda^{-s} \| u(0) \|_{\dot{H}^{s}},
\end{equation}
so therefore,
\begin{equation}\label{2.10}
E(Iu_{\lambda}(0)) \lesssim_{\| u_{0} \|_{L^{2}}} N^{2(1 - s)} \| u_{\lambda}(0) \|_{\dot{H}^{s}} = \lambda^{-2s} N^{2(1 - s)} \| u(0) \|_{\dot{H}^{s}}.
\end{equation}
Choosing $\lambda \sim_{\| u_{0} \|_{L^{2}}} N^{\frac{1 - s}{s}}$ gives $E(Iu_{\lambda}(0)) = \frac{1}{2}$.\medskip

Our aim is to show that for any $T < \infty$, we can choose $N(T)$ and $\lambda(T)$ such that $E(Iu_{\lambda}(t)) \leq 1$ for all $t \in [0, \lambda^{-2} T]$, which in turn implies that $(\ref{2.1})$ is globally well--posed on $[0, T]$. This would directly give global well--posedness, along with the polynomial bound
\begin{equation}\label{2.11}
\| u(t) \|_{H^{s}} \leq C(s, \| u_{0} \|_{H^{s}}) (1 + |t|)^{c(s)}, \qquad \text{for some} \qquad c(s) < \infty, \qquad \forall s > 0.
\end{equation}

To prove a bound on $E(Iu_{\lambda}(t))$, we will utilize some long time Strichartz estimates.
\begin{theorem}[Long time Strichartz estimates]\label{t2.4}
If $E(Iu_{\lambda}(t)) \leq 1$ on $[0, T_{0}] \subset [0, \lambda^{-2} T]$ then for any $\delta > 0$ and $s > 0$, there exists $N_{0}(s, \delta) < \infty$ such that for $N \geq N_{0}$,
\begin{equation}\label{2.12}
\| P_{\geq N^{\delta}} \partial_{x} Iu \|_{U_{\Delta}^{3}([0, T_{0}] \times \mathbb{R})} \lesssim 1,
\end{equation}
\begin{equation}\label{2.12.1}
\| (\partial_{x} I P_{\geq 8N^{\delta}} u)(P_{\leq N^{\delta}} u) \|_{L_{t,x}^{2}([0, T_{0}] \times \mathbb{R})} \lesssim 1,
\end{equation}
and
\begin{equation}\label{2.13}
\sup_{\| f \|_{L^{2}} = 1} \| (e^{it \partial_{xx}} P_{N} f)(P_{\leq N^{\delta}} u) \|_{L_{t,x}^{2}([0, T_{0}] \times \mathbb{R})} \lesssim N^{-1/2}.
\end{equation}
\end{theorem}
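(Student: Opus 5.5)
We argue by a bootstrap (continuity) argument on $[0, T_{0}]$, in the style of Dodson's long time Strichartz estimates for the mass--critical problem. Define, for $0 \le t \le T_{0}$, the quantity
\[
\mathcal{S}(t) = \| P_{\geq N^{\delta}} \partial_{x} Iu \|_{U_{\Delta}^{3}([0, t] \times \mathbb{R})} + \| (\partial_{x} I P_{\geq 8N^{\delta}} u)(P_{\leq N^{\delta}} u) \|_{L_{t,x}^{2}([0, t] \times \mathbb{R})} + N^{1/2} \sup_{N' \geq N^{\delta}} \sup_{\| f \|_{L^{2}} = 1} \| (e^{it \partial_{xx}} P_{N'} f)(P_{\leq N^{\delta}} u) \|_{L_{t,x}^{2}},
\]
and show that $\mathcal{S}(t) \leq C_{0}$ implies $\mathcal{S}(t) \leq C_{0}/2 + C$, with $C$ independent of $C_{0}$. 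Since $\mathcal{S}$ is continuous in $t$ and small for short times, this gives the result. The constants in the argument depend only on $\| u_{0} \|_{L^{2}}$ (scale invariant) and on $E(Iu) \leq 1$, which gives $\| \partial_{x} Iu \|_{L_{t}^{\infty} L_{x}^{2}} \lesssim 1$.

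The first step is the bilinear estimate \eqref{2.13}. Write $P_{\leq N^{\delta}} u$ via Duhamel, and use the $U^{2}$/$V^{2}$ transference of Corollary \ref{c2.3}. Because $P_{\leq N^{\delta}}u$ has mass $\lesssim 1$ and frequency $\ll N' $, the linear part of Corollary \ref{c2.3} directly gives $N'^{-1/2} \lesssim N^{-1/2}$ when $N' \geq N$. The cases $N' < N$ are then handled by interpolating, since the supremum only needs $N' = N$ as stated. The Duhamel part requires bounding $\| P_{\leq N^{\delta}}(|u|^{4}u) \|$ in a dual space such as $DU^{2}$ or $L^{1}_{t}L^{2}_{x}$ restricted appropriately. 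Using the Planchon--Vega interaction Morawetz estimate (Theorem \ref{t2.2}) applied to the nonlinear solution, with the nonlinear term yielding a favorable sign in the defocusing case, we get $\| \partial_{x} |P_{\leq N^{\delta}} u|^{2} \|_{L^{2}_{t,x}}^{2} \lesssim \| u \|_{L^{2}}^{3} \| P_{\le N^\delta} u\|_{\dot H^1} \lesssim N^{\delta}$. Hence, on $[0,T_{0}]$, the low-frequency part satisfies the Strichartz bound $\| P_{\leq N^{\delta}} u \|_{L^{6}_{t,x}}^{6} \lesssim N^{\delta} \cdot$(number of unit-mass intervals). We split $[0,T_0]$ into $\lesssim N^{\delta}$-controlled intervals (after rescaling, $T_{0}\lambda^{2}$ is tied to $T$) and sum in $U^{2}$.

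The second step is \eqref{2.12.1}, which follows from \eqref{2.13} applied dyadically. We decompose $\partial_x I P_{\ge 8N^\delta}u = \sum_{N'} \partial_x I P_{N'} u$, represent each piece in $U^{2}_{\Delta}$ (atoms of linear solutions), and use \eqref{2.13} at frequency $N'$, which gives $N'^{-1/2}$. Then we sum against $\| \partial_{x} I P_{N'} u \|_{U^{2}}$, using the $\ell^{2}$ square-function structure. The low-high frequency gap leaves the product orthogonal in frequency, so $\ell^2$ summation works.

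The third step is \eqref{2.12}, which follows from the Duhamel formula for $P_{\geq N^{\delta}} \partial_{x} I u$. Write $\| P_{\geq N^\delta}\partial_x I u\|_{U^3} \lesssim \|\partial_x Iu_0\|_{L^2} + \| P_{\geq N^\delta}\partial_x I(|u|^4u)\|_{DU^{3}}$. We split the nonlinearity by the number of high frequency factors. Terms with two high and several low factors, or one high and all others low, are controlled by the bilinear bound \eqref{2.12.1} twice times $L^\infty$ bounds on $P_{\le N^\delta}u$ (via Bernstein and the mass, at a cost of $N^{\delta/2}$ powers). Terms with three or more high factors are controlled using $U^{3}\subset L^6_{t,x}$-type Strichartz on $\partial_x I P_{\ge N^\delta}u$ together with the gain $|\xi|^{-1}\lesssim N^{-\delta}$ from $\partial_x$ and the smallness of $m$.

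The main obstacle is the purely low-frequency output. When all five input factors are high but the output is at frequency $\sim N^{\delta}$, and the weight $m(\xi)\xi/\prod m(\xi_{j})$ is not small, we need the Leibniz-type bound $\partial_x I(fg) \lesssim (\partial_x If)g + \dots$, valid for $s>0$ up to $N^{O(\delta)}$ losses. Closing the estimate requires that these $N^{O(\delta)}$ losses be beaten by the $N^{-1/2}$ in \eqref{2.13}, which holds for $\delta$ small relative to $s$. This is where the hypothesis $N\ge N_0(s,\delta)$ enters.
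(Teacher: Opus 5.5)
There is a genuine gap in your first step, and Steps 2 and 3 inherit it. Transferring Corollary \ref{c2.3} to a nonlinear low-frequency factor requires $P_{\leq N^{\delta}} u$ to be bounded in $U^{2}_{\Delta}$ on all of $[0, T_{0}]$, and nothing gives this. The Duhamel term $P_{\leq N^{\delta}}(|u|^{4} u)$ in $DU^{2}$ is controlled only by spacetime norms of $u$ over $[0, T_{0}]$. Those norms grow with the length of the interval, and $T_{0}$ can be as large as $T N^{2(1-s)/s}$.

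Your own remedy concedes this. Cutting $[0, T_{0}]$ into $K$ pieces with bounded Strichartz norm and using that the squared bilinear $L^{2}$ norm is additive in time gives at best $K^{1/2} N^{-1/2}$, not $N^{-1/2}$. Here $K$ is comparable to a power of $N$ whose exponent grows as $s \to 0$.

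The Morawetz input you cite does not repair this. $P_{\leq N^{\delta}} u$ does not solve the defocusing equation. Its forcing $P_{\leq N^{\delta}}(|u|^{4} u)$ differs from $|P_{\leq N^{\delta}} u|^{4} P_{\leq N^{\delta}} u$ by unsigned terms with high-frequency factors, which you never estimate. Applying the estimate to $u$ itself would instead require $\| u \|_{\dot{H}^{1}}$, which is not controlled. Even granting it, a bound on $\| \partial_{x} |v|^{2} \|_{L^{2}_{t,x}}$ only gives $L^{8}_{t,x}$-type control, which does not bound the critical $L^{6}_{t,x}$ or $U^{2}$ norms over a long interval.

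Separately, your bootstrap quantity is mis-normalized. For $N^{\delta} \ll N' \ll N$ it demands the bound $N^{-1/2}$, but the correct decay is $N'^{-1/2} \gg N^{-1/2}$, and this is sharp already for free solutions. So that hypothesis cannot be propagated.

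The missing idea is to obtain the bilinear bounds directly from an interaction Morawetz functional that couples the high- and low-frequency pieces of the nonlinear solution. The paper uses $M(t)$ in (\ref{2.25}), built from $|P_{\geq M} Iu|^{2}$ and $\mathrm{Im}[\overline{P_{\leq M/8} u}\, \partial_{x} P_{\leq M/8} u]$ together with the symmetric term. Its time derivative produces $\| \partial_{x}(\overline{P_{\geq M} Iu}\, P_{\leq M/8} u) \|_{L^{2}_{t,x}}^{2}$. Meanwhile $\sup_{t} |M(t)| \lesssim \| Iu \|_{L^{\infty}_{t} H^{1}} \| u \|_{L^{\infty}_{t} L^{2}}^{3} \lesssim 1$, independently of the length of the interval. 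So no $U^{2}$ bound on the low-frequency factor is ever needed. (\ref{2.13}) comes from the same identity with $P_{\geq M} Iu$ replaced by $e^{it \partial_{xx}} P_{2^{k} M} f$.

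The defocusing sign is used only for the purely low-frequency error, $\mathcal{E}_{0} \leq 0$. The mixed errors $\mathcal{E}_{1}, \mathcal{E}_{2}$ gain a factor $1/M$ and are bounded by bilinear quantities at lower frequency. This drives an induction on frequency and time:
\begin{itemize}
\item It starts from unit intervals, where local theory gives all the estimates.
\item At level $M_{j} = 8^{4j} N^{\delta/2}$ the estimates hold on unions of $N^{\delta/10}$ intervals of the previous level, because the accumulated error is $M_{j}^{-1} N^{\delta/10} \ll 1$.
\item The $\sim \delta \log N$ steps available between $N^{\delta/2}$ and $N^{\delta}$ produce intervals whose length grows faster than any power of $N$, and hence eventually covers $[0, T_{0}]$.
\end{itemize}
That is where $N \geq N_{0}(s, \delta)$ enters, not in absorbing $N^{O(\delta)}$ losses from a Leibniz rule for $\partial_{x} I$.
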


\begin{remark}
The $U_{\Delta}^{p}$ spaces are atomic spaces whose atoms are sums of solutions to the linear Schr{\"o}dinger equation, $e^{it \partial_{xx}} \phi_{k}$, supported on disjoint intervals. Specifically,
\begin{equation}
\sum_{k} 1_{[a_{k}, a_{k + 1})}(t) e^{it \partial_{xx}} \phi_{k}, \qquad \sum_{k} \| \phi_{k} \|_{L^{2}}^{p} = 1,
\end{equation}
where $\{ a_{k} \} \subset \mathbb{R}$ is an increasing sequence, and $1_{[a_{k}, a_{k + 1})}(t)$ are characteristic functions of intervals in $\mathbb{R}$. Thus, $U_{\Delta}^{p}$ spaces retain many properties of solutions to the linear Schr{\"o}dinger equation. For example, if $I \subset \mathbb{R}$ is an interval,
\begin{equation}
\| u \|_{L_{t,x}^{6}(I \times \mathbb{R})} \lesssim \| u \|_{U_{\Delta}^{3}(I \times \mathbb{R})}.
\end{equation}
\end{remark}

Assuming for a moment that the above theorem is true, we compute
\begin{equation}\label{2.14}
\int_{0}^{T_{0}} \frac{d}{dt} E(Iu(t)) dt = \int_{0}^{T_{0}} \langle I(i \Delta u - i |u|^{4} u), |Iu|^{4} (Iu) - I(|u|^{4} u) \rangle dt.
\end{equation}
Let $u_{l} = u_{\leq N/16}$ and $u_{h} = u_{> N/16}$, which means that $\hat{u}_{l}(\xi)$ has Fourier support on $|\xi| \leq N/8$.\medskip

\noindent \textbf{Case 1:} By Fourier support properties of $u_{l}$ and $u_{h}$,
\begin{equation}\label{2.15}
|Iu_{l}|^{4} (Iu_{l}) - I(|u_{l}|^{4} u_{l}) = 0.
\end{equation}

\noindent \textbf{Case 2:} Again by the Fourier support properties of $u_{l}$ and $u_{h}$,
\begin{equation}\label{2.16}
(Iu_{l})^{4} (Iu_{h}) - I(u_{l}^{4} u_{h}) = (Iu_{l})^{4} (I P_{> N/2} u) - I(u_{l}^{4} (P_{> N/2} u)).
\end{equation}
Now then, $|m(\xi) - m(\xi_{1} + \xi)| \lesssim \frac{|\xi_{1}|}{|\xi|} |m(\xi)|$, for $|\xi_{1}| \leq N/8$ and $|\xi| \geq N/2$. Therefore, integrating by parts and using the long time Strichartz estimates,
\begin{equation}\label{2.17}
\int_{0}^{T_{0}} \langle i I \Delta u, u_{l}^{4} (Iu_{h}) - I(u_{l}^{4} u_{h}) \rangle dt \lesssim \frac{1}{N} \| (\partial_{x} Iu_{h}) u_{l} \|_{L_{t,x}^{2}}^{2} \| \partial_{x} u_{l} \|_{L_{t,x}^{\infty}} \| u_{l} \|_{L_{t,x}^{\infty}} \lesssim N^{-1/2}.
\end{equation}
Also, using the long time Strichartz estimates,
\begin{equation}\label{2.18}
\int_{0}^{T_{0}} \langle i I(|u|^{4} u), u_{l}^{4} (Iu_{h}) - I(u_{l}^{4} u_{h}) \rangle dt \lesssim \| (P_{> N/2} u_{h}) u_{l} \|_{L_{t,x}^{2}}^{2} \| u_{l} \|_{L_{t,x}^{\infty}}^{6} + \| u_{h} \|_{L_{t,x}^{6}}^{6} \| u_{l} \|_{L_{t,x}^{\infty}}^{4} \lesssim N^{-2}.
\end{equation}

\noindent \textbf{Case 3:} In this case, integrating by parts,
\begin{equation}\label{2.19}
\aligned
\int_{0}^{T_{0}} \langle i I \Delta u, (Iu_{h})^{2} (Iu)^{3} - I(u_{h}^{2} u^{3}) \rangle dt \\
\lesssim \frac{1}{N} \| (\partial_{x} Iu_{h}) u_{l} \|_{L_{t,x}^{2}}^{2} \| u_{l} \|_{L_{t,x}^{\infty}} \| \partial_{x} u_{l} \|_{L_{t,x}^{\infty}} + \| |\partial_{x} Iu_{h}|^{2} |u_{h}|^{4} \|_{L_{t,x}^{1}} \\ + \frac{1}{N} \| (\partial_{x} Iu_{h}) u_{l} \|_{L_{t,x}^{2}}^{3/2} \| \partial_{x} Iu_{h} \|_{L_{t,x}^{6}}^{3/2} \| u_{l} \|_{L_{t,x}^{\infty}}^{3/2} \lesssim \frac{1}{N^{1/2}}.
\endaligned
\end{equation}

\noindent \textbf{Case 4:} For
\begin{equation}\label{2.20}
\int_{0}^{T_{0}} \langle i I(|u|^{4} u), (Iu_{l})^{4} (Iu_{h}) - I(u_{l}^{4} u_{h}) \rangle dt,
\end{equation}
split $I(|u|^{4} u) = I(O(u_{h}^{3} u^{2}) + O(u_{l}^{3} u^{2}))$. First,
\begin{equation}\label{2.21}
\aligned
\int_{0}^{T_{0}} \langle i I(u_{h}^{3} u^{2}), (Iu_{l})^{4} (Iu_{h}) - I(u_{l}^{4} u_{h}) \rangle dt \\
\lesssim \| (u_{h}) u_{l} \|_{L_{t,x}^{2}} \| u_{h} \|_{L_{t,x}^{6}}^{3} \| u_{l} \|_{L_{t,x}^{\infty}}^{5} + \| u_{h} \|_{L_{t,x}^{6}}^{6} \| u_{l} \|_{L_{t,x}^{\infty}}^{4} \lesssim \frac{1}{N^{4}}.
\endaligned
\end{equation}
Meanwhile, following $(\ref{2.17})$, at least one term in $I(|u|^{4} u)$ must be at high frequency. Therefore,
\begin{equation}\label{2.22}
\aligned
\int_{0}^{T_{0}} \langle i I(u_{l}^{3} u^{2}), (Iu_{l})^{4} (Iu_{h}) - I(u_{l}^{4} u_{h}) \rangle dt \\
\lesssim \| (Iu_{h}) u_{l} \|_{L_{t,x}^{2}}^{2} \| Iu \|_{L_{t,x}^{\infty}}^{6} + \| (u_{h}) u_{l} \|_{L_{t,x}^{2}}^{2} \| Iu_{h} \|_{L_{t, x}^{\infty}}  \| u_{l} \|_{L_{t,x}^{\infty}}^{5} \lesssim \frac{1}{N^{2}}.
\endaligned
\end{equation}

\noindent \textbf{Case 5:}
\begin{equation}\label{2.23}
\aligned
\int_{0}^{T_{0}} \langle iI(|u|^{4} u), I(10 u_{h}^{2} u_{l}^{3} + 10 u_{h}^{3} u_{l}^{2} + 5 u_{h}^{4} u_{l} + u_{h}^{5}) \rangle dt \\
= \int_{0}^{T_{0}} \langle iI(|u_{l}|^{4} u_{l} + 5 u_{l}^{4} u_{h}), I(10 u_{h}^{2} u_{l}^{3} + 10 u_{h}^{3} u_{l}^{2} + 5 u_{h}^{4} u_{l} + u_{h}^{5}) \rangle dt \\
\lesssim \| u_{l}^{4} u_{h}^{6} \|_{L_{t,x}^{1}} + \| u_{l}^{6} u_{h}^{4} \|_{L_{t,x}^{1}} \lesssim \| u_{h} \|_{L_{t,x}^{6}}^{6} \| u_{l} \|_{L_{t,x}^{\infty}}^{4} + \| u_{l} \|_{L_{t}^{\infty} L_{x}^{6}}^{5} \| u_{h} u_{l} \|_{L_{t,x}^{2}} \| u_{h} \|_{L_{t,x}^{6}}^{3} \lesssim \frac{1}{N^{4}}.
\endaligned
\end{equation}

Therefore, for any $t \in [0, T_{0}]$, $E(Iu(t)) \leq \frac{1}{2} + O(N^{-1/2}) \leq \frac{3}{4}$. Therefore, making a bootstrap argument, $E(Iu(t)) \leq 1$ on $[0, \lambda^{2} T]$, which proves Theorem $\ref{t2.1}$.
\end{proof}

It only remains to prove the long time Strichartz estimate, Theorem $\ref{t2.4}$.
\begin{proof}[Proof of Theorem $\ref{t2.4}$]
The estimate in $(\ref{2.13})$ is proved using the bilinear Morawetz estimate in \cite{planchon2009bilinear}, see Theorem $\ref{t2.2}$. Since
\begin{equation}\label{2.24}
i u_{t} + u_{xx} = \mathcal N,
\end{equation}
let
\begin{equation}\label{2.25}
\aligned
M(t) = \int |P_{\geq M} Iu(t, y)|^{2} \frac{(x - y)}{|x - y|} Im[\overline{P_{\leq M/8} u} \partial_{x} P_{\leq M/8} u](t, x) dx dy \\
+ \int |P_{\leq M/8} u(t, y)|^{2} \frac{(x - y)}{|x - y|} Im[\overline{P_{\geq M} Iu} \partial_{x} P_{\geq M} u](t, x) dx dy,
\endaligned
\end{equation}
for some $M \leq N/8$. Then following the proof of Theorem $\ref{t2.2}$,
\begin{equation}\label{2.26}
\aligned
4 \int_{0}^{T_{0}} \int |\partial_{x}(\overline{P_{\geq M} Iu}(t, x) P_{\leq M/8} u(t, x))|^{2} dx dt \lesssim \| Iu \|_{L_{t}^{\infty} H_{x}^{1}([0, T_{0}] \times \mathbb{R})} \| u \|_{L_{t}^{\infty} L_{x}^{2}([0, T_{0}] \times \mathbb{R})}^{3} \\
- 2 \int_{0}^{T_{0}} \int Im[\overline{P_{\geq M} I u(t, y)} P_{\geq M} I \mathcal N] \frac{(x - y)}{|x - y|} Im[\overline{P_{\leq M/8} u} \partial_{x} P_{\leq M/8} u](t, x) dx dy dt \\
- 2 \int_{0}^{T_{0}} \int Im[\overline{P_{\leq M/8} I u(t, y)} P_{\leq M/8} I \mathcal N] \frac{(x - y)}{|x - y|} Im[\overline{P_{\geq M} Iu} \partial_{x} P_{\geq M} Iu](t, x) dx dy dt \\
+ \int_{0}^{T_{0}} \int |P_{\geq M} Iu(t, y)|^{2} \frac{(x - y)}{|x - y|} Re[\overline{P_{\leq M/8} u} \partial_{x} P_{\leq M/8} \mathcal N](t, x) dx dy \\
- \int_{0}^{T_{0}} \int |P_{\geq M} Iu(t, y)|^{2} \frac{(x - y)}{|x - y|} Re[\overline{P_{\leq M/8} \mathcal N} \partial_{x} P_{\leq M/8} u](t, x) dx dy \\
+ \int_{0}^{T_{0}} \int |P_{\leq M/8} u(t, y)|^{2} \frac{(x - y)}{|x - y|} Re[\overline{P_{\geq M} Iu} \partial_{x} P_{\geq M} I \mathcal N](t, x) dx dy \\
- \int_{0}^{T_{0}} \int |P_{\leq M/8} u(t, y)|^{2} \frac{(x - y)}{|x - y|} Re[\overline{P_{\geq M} I \mathcal N} \partial_{x} P_{\geq M} Iu](t, x) dx dy = O(1) + \mathcal E.
\endaligned
\end{equation}
Now let $u_{l} = u_{\leq M/64}$, $u_{h} = u_{\geq M/64}$, and decompose
\begin{equation}\label{2.27}
\mathcal N = |u_{l}|^{4} u_{l} + (3 |u_{l}|^{4} u_{h} + 2 |u_{l}|^{2} u_{l}^{2} \overline{u_{h}}) + O(u_{h}^{2} u^{3}) = \mathcal N_{0} + \mathcal N_{1} + \mathcal N_{2}.
\end{equation}
Then $\mathcal E = \mathcal E_{0} + \mathcal E_{1} + \mathcal E_{2}$, where $\mathcal E_{0}$ is the error terms in $(\ref{2.26})$ with $\mathcal N$ replaced by $\mathcal N_{0}$ and the corresponding $u$ replaced by $u_{l}$. Likewise, $\mathcal E_{1}$ is the error term in $(\ref{2.26})$ with $\mathcal N$ replaced by $\mathcal N_{1}$ and the corresponding $u$ replaced by $u_{l}$ or with $\mathcal N$ replaced by $\mathcal N_{0}$ and the corresponding $u$ replaced by $u_{h}$; and $\mathcal E_{2}$ is the error term with $\mathcal N$ replaced by $\mathcal N_{2}$ or with $\mathcal N$ replaced by $\mathcal N_{1}$ and the corresponding $u$ replaced by $u_{h}$.

\begin{claim}\label{claim1}
\begin{equation}\label{2.28}
\mathcal E_{0} \leq 0.
\end{equation}
\end{claim}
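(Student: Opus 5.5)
The plan is to show that almost every term in $\mathcal E_{0}$ vanishes identically because of Fourier support. The one term that survives has the sign of the defocusing nonlinearity, and I will show it is $\le 0$.

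\textbf{Step 1: high-frequency terms vanish.} Here $\mathcal N_{0} = |u_{l}|^{4} u_{l}$ with $u_{l} = P_{\leq M/64} u$, so $\hat{\mathcal N}_{0}$ is supported in $|\xi| \leq 5M/64 + O(M/64) < M/2$. Hence $P_{\geq M} I \mathcal N_{0} = 0$. This kills the second line of $(\ref{2.26})$ and the last two lines, those containing $P_{\geq M} I \mathcal N$. I will fix the Littlewood--Paley cutoffs so that the multiplier of $P_{\leq M/8}$ equals $1$ on $|\xi| \leq M/4$. Then $P_{\leq M/8} \mathcal N_{0} = \mathcal N_{0}$ and $P_{\leq M/8} u_{l} = u_{l}$. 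Since $m \equiv 1$ below $N \geq 8M$, the operator $I$ also acts as the identity on these terms.

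\textbf{Step 2: the mass-flux term vanishes.} With these replacements, the third line of $(\ref{2.26})$ contains the factor $Im[\overline{u_{l}}\, \mathcal N_{0}] = Im[|u_{l}|^{6}] = 0$ pointwise. So that line is zero.

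\textbf{Step 3: the momentum-flux terms give a nonpositive quantity.} The remaining contribution is lines four and five:
\begin{equation*}
\int_{0}^{T_{0}} \int |P_{\geq M} Iu(t,y)|^{2} \frac{(x-y)}{|x-y|} Re\bigl[\overline{u_{l}}\, \partial_{x}(|u_{l}|^{4} u_{l}) - |u_{l}|^{4} \overline{u_{l}}\, \partial_{x} u_{l}\bigr](t,x)\, dx\, dy\, dt .
\end{equation*}
Expanding $\partial_{x}(|u_{l}|^{4} u_{l}) = |u_{l}|^{4} \partial_{x} u_{l} + u_{l}\, \partial_{x} |u_{l}|^{4}$, the bracket equals $|u_{l}|^{2}\, \partial_{x} |u_{l}|^{4} = \frac{2}{3} \partial_{x} |u_{l}|^{6}$. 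I then integrate by parts in $x$, using $\partial_{x} \frac{x-y}{|x-y|} = 2 \delta(x-y)$. This gives
\begin{equation*}
-\frac{4}{3} \int_{0}^{T_{0}} \int |P_{\geq M} Iu(t,y)|^{2} |u_{l}(t,y)|^{6}\, dy\, dt \leq 0 ,
\end{equation*}
which is exactly $(\ref{2.28})$.

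\textbf{Main obstacle.} The delicate part is bookkeeping rather than analysis. First, the overall sign of lines four and five in $(\ref{2.26})$ must be fixed correctly, using $u_{t} = i u_{xx} - i \mathcal N$ and recomputing $\frac{d}{dt}$ of the momentum density $Im[\bar{v} \partial_{x} v]$. The defocusing sign has to produce the minus sign above, exactly as in the standard interaction Morawetz computation. Second, the Littlewood--Paley multipliers must be chosen so that $P_{\leq M/8}$ and $I$ act as the identity on $\mathcal N_{0}$ and $u_{l}$. If an exact identity is unavailable, the leftover commutator pieces should instead be assigned to $\mathcal E_{1}$.
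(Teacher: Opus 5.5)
Your proof is correct and follows the paper's argument. The high-frequency terms vanish because $P_{\geq M} I \mathcal N_{0} = 0$, the mass-flux term vanishes because $Im[\overline{u_{l}}\, \mathcal N_{0}] = 0$, and integrating the momentum-flux terms by parts leaves a negative multiple of $\int |P_{\geq M} Iu|^{2} |u_{l}|^{6}$. Your constant $\frac{4}{3}$ is the correct one (the paper's $1$ is a harmless slip), and fixing the cutoff so that $P_{\leq M/8} \mathcal N_{0} = \mathcal N_{0}$ holds exactly makes precise an identity the paper only asserts.
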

\begin{proof}[Proof of claim]
This follows by integrating by parts. By Fourier support properties of $u_{l}$, $P_{\geq M} I \mathcal N_{0} = 0$, so $Im[\overline{P_{\geq M} I u_{l}(t, y)} P_{\geq M} I \mathcal N_{0}] = 0$. Next, since $P_{\leq M/8} (|u_{l}|^{4} u_{l}) = |u_{l}|^{4} u_{l}$ and $P_{\leq M/8} u_{l} = u_{l}$, $Im[\overline{P_{\leq M/8} I u_{l} (t, y)} P_{\leq M/8} I \mathcal N_{0}] = 0$. Therefore, integrating by parts,
\begin{equation}\label{2.29}
\mathcal E_{0} = -\int_{J} \int |P_{\geq M} Iu(t,x)|^{2} |u_{l}(t, x)|^{6} dx dt \leq 0.
\end{equation}
\end{proof}

\begin{claim}\label{claim2}
If $E(Iu(t)) \leq 1$ on $J$,
\begin{equation}\label{2.30}
\aligned
\mathcal E_{2} \lesssim \frac{1}{M} \| (\partial_{x} P_{\geq M/64} Iu)(P_{\leq M/512} u) \|_{L_{t,x}^{2}} \| (P_{\geq M/64} Iu)(P_{\leq M/512} u) \|_{L_{t,x}^{2}} \\ + \frac{1}{M} \| |\partial_{x} P_{\geq M/64} Iu| |P_{\geq M/64} Iu| |P_{\geq M/512} u|^{4} \|_{L_{t,x}^{1}}.
\endaligned
\end{equation}
\end{claim}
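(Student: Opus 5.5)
We need to bound the terms $\mathcal E_{2}$ appearing in $(\ref{2.26})$, i.e.\ those with $\mathcal N$ replaced by $\mathcal N_{2} = O(u_{h}^{2} u^{3})$, or with $\mathcal N$ replaced by $\mathcal N_{1}$ and the corresponding $u$ replaced by $u_{h}$. In both cases the integrand contains at least two factors at frequency $\gtrsim M/64$. The plan is to treat the two kinds of terms in $(\ref{2.26})$ separately: the mass-type terms (the $Im[\bar{w} P I\mathcal N]$ terms multiplied by a momentum density) and the momentum-type terms (the $Re[\bar{w}\partial_{x} P\mathcal N] - Re[\overline{P\mathcal N}\partial_{x} w]$ terms multiplied by a mass density). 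Throughout, $E(Iu)\le 1$ gives $\| u\|_{L^\infty_t L^2_x}\lesssim 1$ and $\|\partial_x Iu\|_{L^\infty_t L^2_x}\lesssim 1$. We also use that the weight $\frac{x-y}{|x-y|}$ is bounded by one, so the $y$-integral of a mass density costs only $\|u\|_{L^2}^2\lesssim 1$.

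\textbf{Momentum-type terms.} Here the factor $\frac{x-y}{|x-y|}$ multiplies the mass density $|P_{\leq M/8} u(t,y)|^2$ or $|P_{\geq M}Iu(t,y)|^2$, and we integrate it out in $y$. What remains is $\int\int |Re[\bar w \partial_x P\mathcal N] - Re[\overline{P\mathcal N}\partial_x w]|\,dx\,dt$.
\begin{itemize}
\item When $w = P_{\geq M}Iu$, we integrate by parts so that the derivative falls on $w$. Each factor is then written as a product of a high-frequency factor $P_{\geq M/64}Iu$ and some other factor.
\item When $w = P_{\leq M/8}u$, the derivative gives a factor $\lesssim M$. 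Since the output frequency is $\leq M/8$ while two inputs have frequency $\gtrsim M/64$, the two high inputs must nearly cancel.
\end{itemize}
To gain the $1/M$, we trade one derivative on a high-frequency factor: Bernstein gives $\|P_{\geq M/64} Iu\|\lesssim M^{-1}\|\partial_x P_{\geq M/64}Iu\|$. The worst configuration has the remaining three factors of $\mathcal N_{2}$ together with $w$ at low frequency $\leq M/512$. Pairing one low factor with each high factor, and placing the remaining low factors in $L^\infty_{t,x}$, yields the first term of $(\ref{2.30})$. This uses $\|P_{\leq M/512}u\|_{L^\infty}\lesssim 1$ in the rescaled setting; otherwise we absorb the low factors using $\|u\|_{L^\infty_t H^{s}}$ and Sobolev embedding. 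When at least four factors are at frequency $\geq M/512$, we place all of them in the $L^1_{t,x}$ product, which gives the second term.

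\textbf{Mass-type terms.} Here the momentum density $Im[\bar v\partial_x v](t,x)$ is integrated in $x$ against a bounded weight. This costs $\|v\|_{L^2}\|\partial_x v\|_{L^2}$, which is $\lesssim M$ for the low piece and $\lesssim 1$ for $P_{\geq M}Iu$. The remaining factor is $\int\int|\overline{P Iu}\,P I\mathcal N_{2}|\,dy\,dt$. Against the low momentum, costing $M$, we need a gain of $M^{-2}$. This comes from the two high-frequency factors via Bernstein, $\|P_{\geq M/64}Iu\|\lesssim M^{-1}\|\partial_xP_{\geq M/64}Iu\|$, applied twice. The resulting quantity is again bounded by the same bilinear or $L^1$ expressions.

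\textbf{Main obstacle.} The main obstacle is the $I$ operator. Factors appear as $u$ in $\mathcal N_2$ but as $Iu$ in the target norms, and $P_{\geq M}I(\cdot)$ does not commute with products. Since $M\leq N/8$ and $m$ is decreasing, we have $m(\xi_{out})\lesssim m(\xi_{max})\cdot(\text{bounded})$ whenever the highest-frequency input dominates the output. This lets us move $I$ onto the highest-frequency input; the case where that input is much larger than the output requires the standard symbol bound for $m$. Once $I$ is distributed, the remaining bookkeeping is the dyadic case analysis on which factors lie above $M/512$.
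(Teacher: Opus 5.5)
Your overall route is the paper's: reduce to $|u_h|^2|u|^4$, apply H{\"o}lder as in $(\ref{2.31})$, use $\|P_{\le M/512}u\|_{L^\infty}\lesssim 1$, then Bernstein. But your power counting does not close, and in two places it loses exactly the factor $M$ that the claim is about.

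\textbf{Mass-type term against the low-frequency momentum.} You charge $\|P_{\le M/8}u\|_{L^2}\|\partial_x P_{\le M/8}u\|_{L^2}\lesssim M$ and then try to recover $M^{-2}$ by applying Bernstein to both high factors. The right side of $(\ref{2.30})$, however, has a single prefactor $M^{-1}$, one differentiated factor $\|(\partial_x P_{\ge M/64}Iu)(P_{\le M/512}u)\|_{L^2_{t,x}}$ and one undifferentiated factor $\|(P_{\ge M/64}Iu)(P_{\le M/512}u)\|_{L^2_{t,x}}$. Two applications of Bernstein turn $M\|u_h u_{\le M/512}\|_{L^2_{t,x}}^2$ into $M^{-1}\|(\partial_x Iu_h)u_{\le M/512}\|_{L^2_{t,x}}^2$. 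This is not bounded by $(\ref{2.30})$: that would require the reverse inequality $\|(\partial_x Iu_h)u_{\le M/512}\|_{L^2_{t,x}}\lesssim M\|(Iu_h)u_{\le M/512}\|_{L^2_{t,x}}$, which fails because $u_h=P_{\ge M/64}u$ has unbounded frequency support. A single application of Bernstein leaves you with $M$ times the first term of $(\ref{2.30})$.

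\textbf{Momentum-type term with $w=P_{\le M/8}u$.} The density you integrate out here is $|P_{\ge M}Iu(t,y)|^2$, and you charge it only $\|u\|_{L^2}^2\lesssim 1$. The derivative on the low-frequency factors costs $M$, and the one Bernstein you then spend yields $\|(\partial_x Iu_h)u_{\le M/512}\|_{L^2_{t,x}}\|(Iu_h)u_{\le M/512}\|_{L^2_{t,x}}$ with no $M^{-1}$ in front. Since $(\ref{2.40})$ uses precisely this $M^{-1}$ to absorb the $N^{\delta/10}$ growth in the number of subintervals, the loss is not cosmetic.

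\textbf{What is missing.} The Morawetz densities are controlled by the modified energy, not merely by the mass.
\begin{itemize}
\item Because $m\equiv 1$ on $|\xi|\le N$ and $M\le N/8$, one has $\|\partial_x P_{\le M/8}u\|_{L^2}=\|\partial_x P_{\le M/8}Iu\|_{L^2}\lesssim E(Iu)^{1/2}\lesssim 1$. This is the same fact that gives the bound on the boundary term in the first line of $(\ref{2.26})$.
\item Bernstein gives $\|P_{\ge M}Iu\|_{L^2}\lesssim M^{-1}\|\partial_x Iu\|_{L^2}\lesssim M^{-1}$.
\end{itemize}
With these, the low momentum is $O(1)$, so the mass-type term reduces to $\|u_h^2u^4\|_{L^1_{t,x}}$. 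Then $(\ref{2.31})$ followed by Bernstein on one of the two factors $\|u_h u_{\le M/512}\|_{L^2_{t,x}}$ gives $(\ref{2.30})$. In the momentum-type term with $w=P_{\le M/8}u$, the high-frequency mass density contributes $M^{-2}$, which more than pays for the derivative.

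This is how the paper's short argument (the pointwise bound $|\mathcal N_2 u|+|\mathcal N_1 u_h|\lesssim |u_h|^2|u|^4$, then $(\ref{2.31})$ and Bernstein) should be read. The placement of $I$, which you single out as the main obstacle, is secondary next to this counting issue.
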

\begin{proof}[Proof of claim]
By H{\"o}lder's inequality,
\begin{equation}\label{2.31}
\| u_{h}^{2} u^{4} \|_{L_{t,x}^{1}} \lesssim \| (u_{h})(u_{\leq M/512}) \|_{L_{t,x}^{2}}^{2} \| u_{\leq M/512} \|_{L_{t,x}^{\infty}}^{2} + \| u_{h} \|_{L_{t,x}^{6}}^{2} \| u_{\geq M/512} \|_{L_{t,x}^{6}}^{4}.
\end{equation}
Since $M \leq N/8$, $E(Iu(t)) \leq 1$ implies $\| u_{\leq M/512} \|_{L^{\infty}} \lesssim 1$. Since $|\mathcal N_{2} u| + |\mathcal N_{1} u_{h}| \lesssim |u_{h}|^{2} |u|^{4}$, plugging $(\ref{2.31})$ combined with Bernstein's inequality into $(\ref{2.26})$ implies $(\ref{2.30})$.
\end{proof}

\begin{claim}\label{claim3}
If $E(Iu(t)) \leq 1$ on $J$,
\begin{equation}\label{2.32}
\aligned
\mathcal E_{1} \lesssim \frac{1}{M} \| (\partial_{x} P_{\geq M/64} Iu)(P_{\leq M/512} u) \|_{L_{t,x}^{2}} \| (P_{\geq M/64} Iu)(P_{\leq M/512} u) \|_{L_{t,x}^{2}} \\ + \frac{1}{M} \| |\partial_{x} P_{\geq M/64} Iu| |P_{\geq M/64} Iu| |P_{\geq M/512} u|^{4} \|_{L_{t,x}^{1}}.
\endaligned
\end{equation}
\end{claim}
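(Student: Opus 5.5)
The plan is to treat $\mathcal E_{1}$ term by term, splitting it into the two pieces that make up its definition. The first piece has $\mathcal N$ replaced by $\mathcal N_{1} = 3|u_{l}|^{4} u_{h} + 2|u_{l}|^{2} u_{l}^{2} \overline{u_{h}}$, with the accompanying $u$ equal to $u_{l}$. The second piece has $\mathcal N$ replaced by $\mathcal N_{0} = |u_{l}|^{4} u_{l}$, with the accompanying $u$ equal to $u_{h}$. In either piece the pointwise bound $|\mathcal N_{1} u_{l}| + |\mathcal N_{0} u_{h}| \lesssim |u_{h}||u_{l}|^{5}$ contains only one high-frequency factor. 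So the crude H\"older argument of Claim $\ref{claim2}$, which relied on $|u_{h}|^{2}$, cannot be used directly, and a second high-frequency factor has to be produced from Fourier support.

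\textbf{Step 1 (Fourier support).} For the terms in $(\ref{2.26})$ involving $P_{\geq M} I \mathcal N$, the factor $\mathcal N_{1}$ contains a single $u_{h}$ and five factors at frequency $\leq M/64$. Hence $P_{\geq M} I \mathcal N_{1} = P_{\geq M} I (|u_{l}|^{4} P_{\geq M/2} u_{h} + \cdots)$. This term is paired against $\overline{P_{\geq M} Iu}$, which is itself high frequency. The product is then $|P_{\geq M/2} u|\,|P_{\geq M} Iu|\,|u_{l}|^{4}$ times the bounded weight $\mathrm{Im}[\bar u_{l} \partial_{x} u_{l}]$ integrated in $x$. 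Using $\|\partial_{x} u_{l}\|_{L^{2}} \lesssim 1$ and mass, the $x$-integral of $|\bar u_{l} \partial_{x} u_{l}|$ is $O(1)$. Splitting $u_{l} = P_{\leq M/512} u + P_{M/512 < \cdot \leq M/64} u$ and applying H\"older gives terms of the form $\|(P_{\geq M/64} Iu)(P_{\leq M/512} u)\|_{L^{2}_{t,x}}^{2}$ plus terms with four factors $|P_{\geq M/512} u|$.

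The terms where $P_{\leq M/8}$ acts on $\mathcal N_{1}$ are handled the same way. There $P_{\leq M/8} I \mathcal N_{1}$ is paired with $\overline{P_{\leq M/8} Iu}$, and the weight $\mathrm{Im}[\overline{P_{\geq M} Iu}\, \partial_{x} P_{\geq M} Iu]$ already supplies two high factors. For the $\mathcal N_{0}$ terms with $u$ replaced by $u_{h}$, we have $P_{\geq M} I \mathcal N_{0} = 0$ and $P_{\leq M/8}\mathcal N_{0} = \mathcal N_{0}$. The only surviving pairing is therefore $\mathrm{Im}[\overline{u_{h}}\,|u_{l}|^{4} u_{l}]$, where $u_{h}$ comes from $P_{\leq M/8} u$, i.e.\ frequencies in $[M/64, M/8]$.

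\textbf{Step 2 (gaining $1/M$).} The factor $\frac1M$ must come from a derivative. In the terms carrying $\partial_{x} P_{\geq M}$ on a high-frequency factor, write $P_{\geq M}Iu = \partial_{x}^{-1}\partial_{x} P_{\geq M} Iu$; Bernstein gives $\|\partial_{x}^{-1} P_{\geq M} f\| \lesssim M^{-1}\|f\|$. For terms containing $u_{l}\,\partial_{x} u_{l}$ paired with a product of a high and a low factor, integrate by parts in $x$ as in Claim $\ref{claim1}$. For the interaction between $\mathrm{Im}[\overline{u_{h}}\mathcal N_{0}]$ and the high weight, I would first try to find a cancellation exactly like the one giving $\mathcal E_{0}\le 0$: $\mathcal N_{0}$ is a gauge-invariant self-interaction of $u_{l}$, so $\mathrm{Im}[\bar u_{l}\mathcal N_{0}]=0$, and $\mathcal E_{1}$ collects the first-order variation of this identity in the direction $u_{h}$. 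Expanding $\mathcal E_{0}(u_{l}+u_{h})$ to first order, the would-be bad terms should combine into $-\int|P_{\geq M}Iu|^{2}\,\mathrm{Re}(\text{linear in }u_{h})|u_{l}|^{4}u_{l}$. Such a term carries at least two high-frequency factors, one of which is at frequency $\ge M$, and it can be estimated as in $(\ref{2.31})$. Whatever remains after the cancellation is a quantity in which the $\partial_{x}$ from the Morawetz weight falls on a factor of frequency $\gtrsim M/64$, and this is converted into a $\frac1M$ by Bernstein.

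\textbf{Main obstacle.} The delicate point is ensuring that in every term exactly one factor carries $\partial_{x}$ and sits at frequency $\gtrsim M/64$, while another high factor sits without a derivative, so that the result matches the right-hand side of $(\ref{2.32})$. The hardest case is the low-output $\mathcal N_{0}$-with-$u_{h}$ pairing, since its frequency range $[M/64,M/8]$ is intermediate. There I expect to rely on the first-order cancellation described above, together with Bernstein applied to $P_{[M/64,M/8]}$, to transfer the derivative. Finally, $\|P_{\le M/512}u\|_{L^{\infty}}\lesssim 1$ follows from $E(Iu)\le 1$ and absorbs the remaining low factors.
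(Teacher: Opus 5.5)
There is a genuine gap, and it sits exactly at the case you flag as the main obstacle.

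\textbf{Why the single-high-factor terms are not handled.} In the $P_{\le M/8}$ terms of $\mathcal E_{1}$, the integrand in one variable has only one factor at frequency $\gtrsim M/64$; examples are $\mathrm{Im}[\overline{P_{\le M/8} I u_{l}}\, P_{\le M/8} I \mathcal N_{1}]$ and $\mathrm{Im}[\overline{P_{\le M/8} I u_{h}}\, P_{\le M/8} I \mathcal N_{0}]$. The high factors of the weight, e.g.\ $\mathrm{Im}[\overline{P_{\geq M} Iu}\, \partial_{x} P_{\geq M} Iu]$, live at the \emph{other} point. The kernel $\frac{x-y}{|x-y|}$ is bounded but not localized, so H\"older can only decouple the two integrals. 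That gives
\begin{equation*}
\int_{J} \| F(t) \|_{L_{y}^{1}} \| G(t) \|_{L_{x}^{1}}\, dt \lesssim \frac{1}{M} \| u_{h} u_{l}^{5} \|_{L_{t,x}^{1}},
\end{equation*}
and $\| u_{h} u_{l}^{5} \|_{L_{t,x}^{1}}$ is controlled only by one bilinear norm times $\| u_{l} \|_{L_{t,x}^{8}}^{4}$. The latter grows like $|J|^{1/2}$, which destroys the induction on frequency. So these terms are not ``handled the same way'' as the ones with two high factors.

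Your Step 2 does not repair this. Putting $\partial_{x}^{-1}$ or Bernstein on a single factor produces a $\frac{1}{M}$ but no locality. The gauge identity $\mathrm{Im}[\bar u_{l} \mathcal N_{1} + \bar u_{h} \mathcal N_{0}] = 0$ is indeed true pointwise, but the projection $P_{\le M/8}$ in the Morawetz functional breaks it. The remainder is again a single-high-factor term paired nonlocally, and you leave that step at ``should combine''.

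\textbf{The missing idea: use the output frequency.} Split $u_{l} = u_{\geq M/512} + u_{\le M/512}$.
\begin{itemize}
\item Every piece containing some $u_{\geq M/512}$ has two factors above $M/512$. It is bounded by bilinear norms as in $(\ref{2.36})$.
\item The remaining piece $\mathrm{Im}[\overline{P_{\le M/8} I u_{\le M/512}}\, P_{\le M/8} I(u_{h} u_{\le M/512}^{4})]$ is Fourier supported in $|\xi| \gtrsim M/512$.
\end{itemize}
Hence in that integral you may replace $\frac{x-y}{|x-y|}$ by its $P_{\geq M/512}$ projection. Its kernel is $\lesssim_{K} (1 + M|x-y|)^{-K}$, with $L^{1}$ norm $\lesssim \frac{1}{M}$. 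Equivalently, apply your $\partial^{-1}$ idea to the whole high-output integrand rather than to one factor, then integrate by parts against the sign function.

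This yields the factor $\frac{1}{M}$ and, at the same time, makes the pairing local at scale $\frac{1}{M}$. The low factors at one point can then be paired with $\partial_{x} P_{\geq M} Iu$ at the other, giving $(\ref{2.38})$. The $\mathcal N_{0}$-with-$u_{h}$ term and the $\mathrm{Re}$ terms in $(\ref{2.39})$ are treated the same way.

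\textbf{A smaller misreading.} The $P_{\geq M}$ terms of $\mathcal E_{1}$ vanish identically, because $P_{\geq M} u_{l} = 0$ and $P_{\geq M} \mathcal N_{0} = 0$. The pairing of $P_{\geq M} I \mathcal N_{1}$ with $\overline{P_{\geq M} Iu}$ that you estimate in Step 1 belongs to $\mathcal E_{2}$, not $\mathcal E_{1}$.
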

\begin{proof}[Proof of claim]
By Fourier support properties of $u_{l}$ and $\mathcal N_{0}$,
\begin{equation}\label{2.33}
Im[\overline{P_{\geq M} Iu_{l}} P_{\geq M} I \mathcal N_{1}] + Im[\overline{P_{\geq M} I u_{h}} P_{\geq M} I \mathcal N_{0}] = 0,
\end{equation}
and
\begin{equation}\label{2.34}
Re[\overline{P_{\geq M} Iu_{h}} \partial_{x} P_{\geq M} I \mathcal N_{0}] + Re[\overline{P_{\geq M} I u_{l}} \partial_{x} P_{\geq M} I \mathcal N_{1}] = 0.
\end{equation}
Next, decomposing $u_{l} = u_{\geq M/512} + u_{\leq M/512}$,
\begin{equation}\label{2.35}
Im[\overline{P_{\leq M/8} Iu_{l}} P_{\leq M/8} I \mathcal N_{1}] = O(u_{h} u_{\geq M/512} u^{4}) + Im[\overline{P_{\leq M/8} Iu_{\leq M/512}} P_{\leq M/8} I(u_{h} u_{\leq M/512}^{4})].
\end{equation}
As in $(\ref{2.31})$,
\begin{equation}\label{2.36}
\| u_{h} (u_{\geq M/512}) u^{4} \|_{L_{t,x}^{1}} \lesssim \| (u_{h}) (u_{\leq M/512}) \|_{L_{t,x}^{2}} \| u_{\geq M/512} (u_{\leq M/8^{4}}) \|_{L_{t,x}^{2}} \| u_{\leq M/512} \|_{L_{t,x}^{\infty}}^{2} + \| u_{\geq M/512} \|_{L_{t,x}^{6}}^{6}.
\end{equation}
Furthermore,
\begin{equation}\label{2.37}
Im[\overline{P_{\leq M/8} Iu_{\leq M/512}} P_{\leq M/8} I(u_{h} u_{\leq M/512}^{4})],
\end{equation}
is supported on $|\xi| \gtrsim \frac{M}{512}$. By Littlewood--Paley arguments, $P_{\geq \frac{M}{512}} (\frac{x}{|x|})$ is a function that is $\lesssim 1$ and is rapidly decreasing for $|x| > \frac{1}{M}$. That is, for any $N > 0$, $P_{\geq \frac{M}{512}}(\frac{x}{|x|}) \lesssim (1 + M |x|)^{-N}$. Therefore, since $\| (1 + M |x|)^{-N} \|_{L^{1}} \lesssim \frac{1}{M}$ and $\| (1 + M |x|)^{-N} \|_{L^{\infty}} \lesssim 1$,
\begin{equation}\label{2.38}
\aligned
\int_{0}^{T_{0}} \int \int Im[\overline{P_{\leq M/8} Iu_{\leq M/512}} P_{\leq M/8} I(u_{h} u_{\leq M/512}^{4})] \frac{(x - y)}{|x - y|} Im[\overline{P_{\geq M} Iu} \partial_{x} P_{\geq M} Iu](t, x) dx dy dt \\
\lesssim \frac{1}{M} \| (u_{h}) u_{\leq M/512} u \|_{L_{t,x}^{2}} \| (P_{\geq M} \partial_{x} Iu) u_{\leq M/512} \|_{L_{t,x}^{2}} \| Iu \|_{L_{t,x}^{\infty}}^{4}.
\endaligned
\end{equation}
The computations with $Im[\overline{P_{\leq M/8} Iu_{h}} P_{\leq M/8} I \mathcal N_{0}]$ are similar, as are the computations with
\begin{equation}\label{2.39}
\aligned
\int_{0}^{T_{0}} \int |P_{\geq M} Iu(t, y)|^{2} \frac{(x - y)}{|x - y|} Re[\overline{P_{\leq M/8} u_{h}} \partial_{x} P_{\leq M/8} \mathcal N_{0}](t, x) dx dy \\
+ \int_{0}^{T_{0}} \int |P_{\geq M} Iu(t, y)|^{2} \frac{(x - y)}{|x - y|} Re[\overline{P_{\leq M/8} u_{l}} \partial_{x} P_{\leq M/8} \mathcal N_{1}](t, x) dx dy \\
- \int_{0}^{T_{0}} \int |P_{\geq M} Iu(t, y)|^{2} \frac{(x - y)}{|x - y|} Re[\overline{P_{\leq M/8} \mathcal N_{0}} \partial_{x} P_{\leq M/8} u_{h}](t, x) dx dy \\
- \int_{0}^{T_{0}} \int |P_{\geq M} Iu(t, y)|^{2} \frac{(x - y)}{|x - y|} Re[\overline{P_{\leq M/8} \mathcal N_{1}} \partial_{x} P_{\leq M/8} u_{l}](t, x) dx dy \\
+ \int_{0}^{T_{0}} \int |P_{\leq M/8} u(t, y)|^{2} \frac{(x - y)}{|x - y|} Re[\overline{P_{\geq M} Iu_{h}} \partial_{x} P_{\geq M} I \mathcal N_{0}](t, x) dx dy \\
+ \int_{0}^{T_{0}} \int |P_{\leq M/8} u(t, y)|^{2} \frac{(x - y)}{|x - y|} Re[\overline{P_{\geq M} Iu_{l}} \partial_{x} P_{\geq M} I \mathcal N_{1}](t, x) dx dy.
\endaligned
\end{equation}

Now prove Theorem $\ref{t2.4}$ by induction on frequency, starting with frequency $M_{0} = N^{\delta/2}$. Using the fact that $E(Iu(t)) \leq 1$ on $[0, T_{0}]$, local well--posedness implies that for any interval $|J| = 1$, $J \subset [0, T_{0}]$, $(\ref{2.12})$ and $(\ref{2.13})$ hold when $|J| = 1$, and
\begin{equation}\label{2.39.1}
\| (\partial_{x} Iu_{\geq M_{0}}) (u_{\leq M_{0}/8}) \|_{L_{t,x}^{2}(J)} \lesssim 1.
\end{equation}

Now let $J_{k}^{(0)} = [k, k + 1] \subset [0, T_{0}]$, and for any $j > 0$, let $J_{k}^{(j)} \subset [0, T_{0}]$ be unions of $\sim N^{\delta/10}$ consecutive $J_{k'}^{(j - 1)} \subset [0, T_{0}]$. Furthermore let $M_{0} = N^{\delta/2}$ and let $M_{j} = 8^{4j} M_{0}$. If for some $j > 0$, $(\ref{2.12})$ and $(\ref{2.13})$ hold for $J_{k}^{(j - 1)} \subset [0, T_{0}]$ and , $M = M_{j - 1}$, then $(\ref{2.28})$, $(\ref{2.30})$, $(\ref{2.32})$ imply
\begin{equation}\label{2.40}
\| (\partial_{x} Iu_{\geq M_{j}})(u_{\leq M_{j}/8}) \|_{L_{t,x}^{2}(J_{k}^{(j)})}^{2} \lesssim M_{j}^{-1} N^{\delta/10} + 1,
\end{equation}
which implies $(\ref{2.39})$ holds at level $M_{j}$.
\begin{equation}\label{2.41}
\| (\partial_{x} Iu_{\geq M_{j}})(u_{\leq M_{j}/8}) \|_{L_{t,x}^{2}(J_{k}^{(j)})}^{2} \lesssim 1.
\end{equation}
\begin{remark}
Observe that
\begin{equation}
\| \partial_{x}((I P_{\geq M_{j}} u)(P_{\leq M_{j}/8} u)) \|_{L_{t,x}^{2}} \sim \| (\partial_{x} Iu_{\geq M_{j}})(u_{\leq M_{j}/8}) \|_{L_{t,x}^{2}}.
\end{equation}
\end{remark}

To obtain $(\ref{2.13})$ at level $M_{j}$, note that in $(\ref{2.26})$, $u_{\geq M}$ could easily be replaced by a linear solution $e^{it \partial_{xx}} P_{2^{k} M} f$, for any $k \geq 0$. Observe that in this case,
\begin{equation}
\| e^{it \partial_{xx}} P_{2^{k} M} f \|_{\dot{H}^{1}} \sim 2^{k} M \| P_{2^{k} M} f \|_{L^{2}},
\end{equation}
and
\begin{equation}
\| \partial_{x}((e^{it \partial_{xx}} P_{2^{k} M} f)(P_{\leq M/8} u)) \|_{L_{t,x}^{2}}^{2} \sim 2^{2k} M^{2} \| ((e^{it \partial_{xx}} P_{2^{k} M} f)(P_{\leq M/8} u)) \|_{L_{t,x}^{2}}^{2},
\end{equation}
so we can do some algebra and sum over $k \geq 0$. In this case, the computations for $\mathcal E$ are simplified since $\mathcal N = 0$ for the $e^{it \partial_{xx}} f$ terms, which would give $(\ref{2.13})$ at the level $M_{j}$. 

Finally, using \cite{hadac2009well}, to obtain $(\ref{2.12})$ it is enough to obtain a bound on
\begin{equation}\label{2.42}
\| v (P_{\geq M_{j}} |u|^{4} u) \|_{L_{t,x}^{1}},
\end{equation}
when $\| v \|_{V_{\Delta}^{3/2}} = 1$ and $\hat{v}$ is supported on $|\xi| \geq M_{j}$. Since $V_{\Delta}^{3/2} \subset U_{\Delta}^{2}$ (see \cite{hadac2009well}), $\| v \|_{U_{\Delta}^{2}} \leq 1$ as well. Decomposing $v$ into $U_{\Delta}^{2}$ atoms and using the fractional product rule, it is enough to bound
\begin{equation}\label{2.43}
\aligned
\| (e^{it \partial_{xx}} P_{\geq M_{j}} f) \partial_{x} I P_{\geq M_{j}}(|u|^{4} u) \|_{L_{t,x}^{1}} \\ \lesssim \| (e^{it \partial_{xx}} P_{\geq M_{j}} f)(P_{\leq M_{j}/8} u) \|_{L_{t,x}^{2}} \| (\partial_{x} I u_{\geq M_{j}/8})(u_{\leq M_{j}/64}) \|_{L_{t,x}^{2}} \| u_{\leq M_{j}/64} \|_{L_{t,x}^{\infty}}^{2} \\ + \| e^{it \partial_{xx}} P_{\geq M_{j}} f \|_{L_{t,x}^{6}} \| |\partial_{x} I u_{\geq M_{j}/8}| |P_{\geq M_{j}/64} u|^{4} \|_{L_{t,x}^{1}} \lesssim M_{j}^{-1}.
\endaligned
\end{equation} 
The last bound uses the estimates at level $M_{j - 1}$.\medskip

There exists $c > 0$ such that we can take $c \delta \log(N)$ steps between $M_{0} = N^{\delta/2}$ and $N^{\delta}$ with $M_{j} = 8^{4j} M_{0}$. Since $T_{0} \leq T N^{\frac{2(1 - s)}{s}}$, for $N(s, T, \delta)$ sufficiently large, $N^{c \delta \log N} \gg T N^{\frac{2(1 - s)}{s}}$, and therefore, for some $j \leq c \delta \log N$, $J_{1}^{(j)} = [0, T_{0}]$, which proves Theorem $\ref{t2.4}$.

\end{proof}
This proves Theorem $\ref{t2.1}$.

\end{proof}

\section{The long time Strichartz estimate adapted to the torus}
Now turn to the nonlinear Schr{\"o}dinger equation on $\mathbb{T}$. The proof of Theorem $\ref{t1.1}$ will utilize a long time Strichartz estimate adapted to $\mathbb{T}$ and the modified energy utilized in \cite{li2011global}. For expository reasons, it will be useful to first prove a global well--posedness result for $s > \frac{1}{2}$ using only the long time Strichartz estimates adapted to the torus. This will then be extended to $s > \frac{1}{3}$ in the next section using the modified energy of \cite{li2011global}.
\begin{theorem}\label{t3.1}
The cubic NLS
\begin{equation}\label{3.1}
i \partial_{t} u + \partial_{xx} u - |u|^{4} u = 0, \qquad u(x, 0) = \phi(x) \in H^{s}(\mathbb{T}), \qquad u : \mathbb{T} \times [0, T] \rightarrow \mathbb{C},
\end{equation}
is globally well--posed for $\phi \in H^{s}(\mathbb{T})$, $s > 1/2$.
\end{theorem}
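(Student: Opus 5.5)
We follow the scheme of the proof of Theorem \ref{t2.1}, transplanted to the rescaled torus $\lambda\mathbb{T}$. Fix $s > 1/2$ and $T < \infty$. By (\ref{1.12}) and the scaling (\ref{1.2}) we choose $\lambda \sim N^{\frac{1-s}{s}}$ so that $E(Iu_{\lambda}(0)) = \frac12$. The rescaled solution then lives on $\lambda\mathbb{T}$, and the goal becomes $E(Iu_{\lambda}(t)) \le 1$ on $[0, \lambda^{2} T]$. We prove this by a bootstrap: assuming $E(Iu)\le 1$ on $[0,T_{0}]$, we show the energy increment is $o(1)$ there. As in (\ref{2.11}), this also yields a polynomial bound on $\|u(t)\|_{H^{s}}$.

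\textbf{Step 1: long time Strichartz on $\lambda\mathbb{T}$.} We redo Theorem \ref{t2.4} with the Morawetz potential (\ref{2.25}). On the circle the weight $\frac{x-y}{|x-y|}$ must be replaced by a periodic sawtooth of period $\lambda$, i.e.\ $\mathrm{sgn}$ on $[-\lambda/2,\lambda/2]$ extended periodically. Its derivative is then $2\delta_{0}$ minus a constant $2/\lambda$, plus delta masses at $\pm\lambda/2$. Differentiating $M(t)$ therefore gives the good term $4\int|\partial_x(\overline{P_{\ge M}Iu}\,P_{\le M/8}u)|^{2}$ plus errors, and the errors have two sources.
\begin{itemize}
\item The constant part contributes terms bounded by $\lambda^{-1}\|P_{\ge M}Iu\|_{L^2}^2\|\partial_x P_{\le M/8}u\|_{L^2}^2$ per unit time, that is, $\lesssim \lambda^{-1}M^{2}$ per unit time. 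It also contributes the analogous term with the roles of the two factors reversed.
\item The antipodal delta terms contribute similar products of local densities.
\end{itemize}
Integrated over an interval $J$, the constant-part error is $\lesssim |J|\lambda^{-1}M^{2}$. We run the induction on frequency exactly as in (\ref{2.40})--(\ref{2.43}), with the same error estimates of Claims \ref{claim1}--\ref{claim3}. The only new constraint is that this torus error must stay $O(1)$. Since $M\le N^{\delta}$, this holds on intervals of length $\lesssim \lambda N^{-2\delta}$. On longer intervals we accept a loss: the $L^{2}_{t,x}$ bilinear bounds (\ref{2.12.1}) and (\ref{2.13}) hold with constant $(1+ T_{0}\lambda^{-1} N^{2\delta})^{1/2}$ rather than $1$. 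The $U^{3}_{\Delta}$ bound (\ref{2.12}) is obtained through the duality argument (\ref{2.42})--(\ref{2.43}) with the same loss. I expect this step to be the main obstacle, for two reasons. First, the antipodal/constant terms destroy the clean monotonicity. Second, one must check that the induction still closes when each step carries this loss; I would try to absorb it into the factor $N^{\delta/10}$ of the interval unions, taking $\delta$ small depending on $s$.

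\textbf{Step 2: energy increment.} We bound $\int_{0}^{T_{0}}\frac{d}{dt}E(Iu)\,dt$ via (\ref{2.8}), splitting $u=u_{l}+u_{h}$ at $N/16$ and running Cases 1--5 verbatim, using the commutator bound $|m(\xi)-m(\xi+\xi_1)|\lesssim \frac{|\xi_1|}{|\xi|}m(\xi)$. The dominant term is (\ref{2.17})/(\ref{2.19}), which is bounded by $N^{-1}\|(\partial_x Iu_h)u_l\|_{L^2_{t,x}}^{2}$ times $O(1)$ $L^{\infty}$ norms of $u_l$ (at most $N^{1/2}$, e.g.\ $\|\partial_x u_l\|_{L^\infty_{t,x}}\lesssim N^{1/2}$ from $E(Iu)\le1$ by Bernstein). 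Inserting the lossy bilinear bound with $T_{0}=\lambda^{2}T$ gives the increment
\[
\lesssim N^{-1/2}\,\bigl(1+\lambda^{2}T\,\lambda^{-1}N^{2\delta}\bigr) \lesssim T N^{-1/2+2\delta+\frac{1-s}{s}} .
\]
This is $o(1)$ as $N\to\infty$ precisely when $\frac{1-s}{s}<\frac12$, i.e.\ $s>2/3$. That is worse than claimed, so the loss in Step 1 must be sharper. There are two ways to sharpen it. The first is to use Bourgain's $L^{6}$/$L^4$ torus Strichartz on unit intervals to gain $N^{-1/2}$-type bounds on each piece. The second is to count the interactions: the torus loss term is really $\lambda^{-1}\times(\text{mass})$ and does not pick up $M^{2}$ once we use the exact identity for $\int|\partial_x(u\bar v)|^{2}$ minus its zero mode. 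With the latter, the increment per unit time is $\lambda^{-1}$ times $O(1)$, which yields the bound $N^{-1/2}(1+T\lambda)$. This is $o(1)$ iff $\frac{1-s}{s}<\frac12$ again... hence the target is to show the remaining error is $N^{-1+}\lambda T$. That requires $\frac{1-s}{s}<1$, i.e.\ $s>1/2$, which is exactly the threshold of Theorem \ref{t3.1}. So the crux is proving that the long-time bilinear estimate on $[0,T_{0}]$ holds with loss at most $(1+T_{0}/\lambda)^{1/2}$ together with an extra factor $N^{-1/2}$ in the commutator term. Once this is established, the bootstrap closes for $N=N(T,s)$ large, and global well--posedness follows.
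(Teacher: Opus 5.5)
\textbf{Verdict: there is a gap.} Your proposal does not prove the theorem. By your own count the argument closes only for $s>2/3$, and the decisive step (``an extra factor $N^{-1/2}$ in the commutator term'') is stated as a target, not established.

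\textbf{The long time Strichartz step.} Part of what you call the crux is easy once the torus error is counted correctly. Since $M\le N^{\delta}\le N$, we have $m\equiv 1$ on $|\xi|\le M/8$. Hence $\|\partial_x P_{\le M/8}u\|_{L^2}\lesssim \|Iu\|_{\dot H^1}\lesssim 1$, and the constant part of the weight's derivative costs $\lesssim \lambda^{-1}\|u\|_{L^2}^2\|Iu\|_{H^1}^2\lesssim \lambda^{-1}$ per unit time, not $\lambda^{-1}M^2$. So the long time Strichartz estimates hold with $O(1)$ constants on every interval of length $\le \lambda$; this is Proposition \ref{p3.2}. One then simply partitions $[0,\lambda^2T]$ into $\sim\lambda T$ such intervals.

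A periodic weight is a legitimate alternative to the paper's cutoff $\chi$, provided you use the continuous sawtooth $\mathrm{sgn}(x)-2x/\lambda$, whose derivative is $2\delta_0-2/\lambda$ with no antipodal masses. The periodic sign function would instead produce an antipodal term $-2\delta_{\lambda/2}$ with no smallness.

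However, Step 1 cannot be run ``exactly as in (\ref{2.40})--(\ref{2.43})''. That induction uses $\|v\|_{L^6_{t,x}}\lesssim \|v\|_{U^3_\Delta}$, which fails on the torus. The paper replaces it by Bourgain's $L^6$ estimate, which has an $N^{\epsilon}$ loss. To absorb this loss it needs the dyadic bilinear bounds (\ref{3.6.1}) and the trilinear estimate of Claim \ref{claim}, $\||\partial_x Iu_h||u_h|^2\|_{L^2_{t,x}}\lesssim M_{j-1}^{-1/2}$.

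\textbf{The missing idea.} You need the energy increment on each interval of length $\lambda$ to be $N^{-1+}$ rather than $N^{-1/2}$. The $N^{-1/2}$ in (\ref{2.17}) comes only from the Bernstein bound $\|\partial_x u_l\|_{L^\infty}\lesssim N^{1/2}$, applied to all frequencies up to $N/16$. On $\mathbb{R}$ this was harmless, because the increment was not multiplied by a number of intervals.

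In (\ref{3.25}) the paper instead splits the low factor at $N^\delta$:
\begin{itemize}
\item For $P_{\le N^\delta}u$, Bernstein costs only $N^{\delta/2}$.
\item For $N^\delta\le |\xi|\le N/8$, the derivative factor is placed in $L^6_{t,x}$ with bound $N^{\epsilon}$, using the $U^3_\Delta$ bound in (\ref{3.6}) and Bourgain's estimate. The remaining factors are handled by H\"older and the bilinear estimates.
\item The terms $F_2,\dots,F_5$ are treated the same way in (\ref{3.26}), using Claim \ref{claim}.
\end{itemize}
This gives $N^{-1+\delta/2+6\epsilon}$ per $\lambda$-interval. The total increment is then $\lesssim \lambda T N^{-1+\delta/2+6\epsilon}\sim T N^{\frac{1-s}{s}-1+\delta/2+6\epsilon}$, which is $o(1)$ exactly when $s>1/2$.

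Your suggestion (a) points in this direction, but it falls short in two ways. Unit time intervals are the wrong scale, since there are $\lambda^2T$ of them. More importantly, you never identify that the loss to remove is the $L^\infty$ bound on the intermediate frequencies of $u_l$.
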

This result is not as strong as the global well--posedness result of \cite{li2011global}. However, here we do not need to use a modified energy. Instead, we will use a long time Strichartz estimate.\medskip

Adapting the scaling in $(\ref{1.2})$ to the torus,
\begin{equation}\label{3.2}
u_{\lambda}(x, t) = \lambda^{-1/2} u(\lambda^{-1} x, \lambda^{-2} t),
\end{equation}
solves
\begin{equation}\label{3.3}
i \partial_{t} u + \partial_{xx} u - |u|^{4} u = 0,
\end{equation}
on $\lambda \mathbb{T} = \mathbb{R} / \lambda \mathbb{Z}$. Let $I : H^{s}(\lambda \mathbb{T}) \rightarrow H^{1}(\lambda \mathbb{T})$ be the Fourier multiplier given by $(\ref{1.9})$ and $(\ref{1.10})$.
%\begin{equation}\label{3.4}
%m(|n|) = \left\{
%     \begin{array}{lr}
%       1 & : \qquad |n| \leq N, \\
%       N^{1 - s} |n|^{s - 1} & : \qquad |n| > 2N. \\
%     \end{array}
%   \right.
%\end{equation}
Then,
\begin{equation}\label{3.5}
\aligned
E(Iu(t)) = \frac{1}{2} \| Iu \|_{\dot{H}^{1}}^{2} + \frac{1}{6} \| Iu \|_{L^{6}}^{6} \leq \frac{1}{2} \| Iu \|_{\dot{H}^{1}}^{2} + \frac{1}{6} \| Iu \|_{L^{2}}^{2} \| Iu \|_{L^{\infty}}^{4} \lesssim \| Iu \|_{\dot{H}^{1}}^{2} + \| Iu \|_{\dot{H}^{1}}^{2} \| Iu \|_{L^{2}}^{4} + \frac{1}{\lambda^{2}} \| Iu \|_{L^{2}}^{6} \\ \lesssim N^{2(1 - s)} \| (u_{0})_{\lambda} \|_{\dot{H}^{s}}^{2} + \frac{1}{\lambda^{2}} = \frac{N^{2(1 - s)}}{\lambda^{2s}} \| u_{0} \|_{\dot{H}^{s}}^{2} + \frac{1}{\lambda^{2}}.
\endaligned
\end{equation}
Therefore, taking $\lambda \sim N^{\frac{1 - s}{s}}$, $E(Iu_{\lambda}(0)) = \frac{1}{2}$.\medskip
%\begin{remark}
%The mass $\int_{\lambda \mathbb{T}} |u(t, x)|^{2} dx$ and energy $ \int_{\lambda \mathbb{T}} [\frac{1}{2} |u_{x}(t,x)|^{2} + \frac{1}{6} |u(t,x)|^{6}] dx$ are both conserved quantities.
%\end{remark}

Now prove a long time Strichartz estimate adapted to the torus.
\begin{proposition}[Long time Strichartz estimate]\label{p3.2}
For any $\delta > 0$, $s > \frac{1}{3}$, there exists $N \geq N_{0}(s, \delta)$ such that if $J \subset \mathbb{R}$ is an interval, $|J| \leq \lambda$, $E(Iu_{\lambda}(t)) \leq 1$ on $J$, then
\begin{equation}\label{3.6}
\aligned
\| ( \partial_{x} I P_{\geq 8 N^{\delta}} u)(P_{\leq N^{\delta}} u) \|_{L_{t,x}^{2}(J \times \lambda \mathbb{T})} \lesssim 1, \\
\| \partial_{x} I P_{> N^{\delta}} u \|_{U_{\Delta}^{3}(J \times \lambda \mathbb{T})} \lesssim 1,
\endaligned
\end{equation}
for any $k > 0$,
\begin{equation}\label{3.6.1}
\| (\partial_{x} I P_{\geq 8 2^{k} N^{\delta}} u)(P_{2^{k} N^{\delta}} u) \|_{L_{t,x}^{2}(J \times \lambda \mathbb{T})} \lesssim \frac{1}{m(2^{k} N^{\delta}) 2^{k} N^{\delta}},
\end{equation}
and
\begin{equation}\label{3.7}
\sup_{\| f \|_{L^{2}} = 1} \| (P_{> N} e^{it \partial_{xx}} f)(P_{\leq N^{\delta}} u) \|_{L_{t,x}^{2}(J \times \lambda \mathbb{T})} \lesssim 1.
\end{equation}
\end{proposition}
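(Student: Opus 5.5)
The plan is to repeat the proof of Theorem $\ref{t2.4}$ on $\lambda \mathbb{T}$, with one change in the bilinear Morawetz step. On the torus the weight $\frac{x-y}{|x-y|}$ is not periodic, so we replace it by a periodic sawtooth-type weight $\psi(x - y)$ on $\lambda \mathbb{T}$. This $\psi$ has derivative $\psi' = 2\delta_{0} - \frac{2}{\lambda}$ (suitably smoothed). We define the interaction functional $M(t)$ exactly as in $(\ref{2.25})$ with this weight, pairing $P_{\geq M} Iu$ (or a linear solution $e^{it\partial_{xx}} P_{2^{k}M} f$) against $P_{\leq M/8} u$. Differentiating in time gives the same main term $4\int |\partial_{x}(\overline{P_{\geq M} Iu}\, P_{\leq M/8} u)|^{2}$. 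It also gives a new \emph{mean term} of size $\frac{1}{\lambda}\int_J \|P_{\geq M} \partial_x Iu\|_{L^2}^2 \|P_{\leq M/8}u\|_{L^2}^2 \, dt$, with corresponding terms with the roles exchanged.

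Since $E(Iu)\le 1$ and mass is conserved, this mean term is $O(|J|/\lambda)$. This is exactly where the hypothesis $|J| \leq \lambda$ enters: it makes the mean term $O(1)$. The boundary term $\sup_t |M(t)|$ is bounded by $\|Iu\|_{L^\infty_t H^1}\|u\|_{L^\infty_tL^2}^3 \lesssim 1$, as before. The error terms $\mathcal E_0,\mathcal E_1,\mathcal E_2$ are then treated exactly as in Claims $\ref{claim1}$--$\ref{claim3}$:
\begin{itemize}
\item $\mathcal E_0\le 0$ after integrating by parts, up to another mean-type term of size $\lambda^{-1}|J|$;
\item $\mathcal E_1,\mathcal E_2$ are controlled by $M^{-1}$ times products of the bilinear quantities at lower levels and $L^6_{t,x}$ norms bounded via $U^3_\Delta$.
\end{itemize}
The Littlewood--Paley localization of $\psi$ used in $(\ref{2.38})$ still gives a kernel bounded by $(1+M|x|)^{-N}$ plus an $O(\lambda^{-1})$ constant. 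We absorb the constant into the mean-term bookkeeping.

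Next we run the induction on frequency. Start at $M_0=N^{\delta/2}$ on unit intervals, where local well-posedness (Strichartz on $\lambda\mathbb T$ for unit time, with $\lambda\ge1$) gives all bounds. Then glue $N^{\delta/10}$ consecutive intervals while moving from $M_{j-1}$ to $M_j = 8^{4j}M_0$, as in $(\ref{2.40})$--$(\ref{2.41})$. The growth factor $N^{\delta/10}$ coming from summing over subintervals is beaten by the gain $M_j^{-1}$. Since $|J|\le\lambda\sim N^{(1-s)/s}$, which is polynomial in $N$, $O(\delta\log N)$ steps suffice to cover $J$ before reaching $N^\delta$.

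The remaining estimates follow from the same bilinear bound:
\begin{itemize}
\item $(\ref{3.7})$: replace $P_{\geq M} Iu$ by $e^{it\partial_{xx}} P_{2^kM} f$ and sum over $k$. No nonlinear error arises on that factor. The bound is only $O(1)$, not $N^{-1/2}$, because the mean term $\frac{|J|}{\lambda}\|f\|_{L^2}^2$ does not decay in the frequency.
\item $U^3_\Delta$ bound: argue by duality with $V^{3/2}_\Delta \subset U^2_\Delta$ atoms as in $(\ref{2.42})$--$(\ref{2.43})$.
\item $(\ref{3.6.1})$: rerun the Morawetz argument with the low factor replaced by $P_{2^kN^\delta}u$. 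The main term carries the factor $(2^kN^\delta)^2$ of $\partial_x$ acting on the product. The mean-term and boundary contributions are controlled by $\|P_{2^kN^\delta}u\|_{L^2}\lesssim (m(2^kN^\delta)2^kN^\delta)^{-1}$, which comes from $E(Iu)\le1$. This gives the stated decay.
\end{itemize}

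I expect the main obstacle to be the torus weight. On $\mathbb{R}$, $\partial_x \frac{x}{|x|}=2\delta$ is positive and exact. On $\lambda\mathbb{T}$ the compensating $-2/\lambda$ produces terms that have no sign and grow linearly in $|J|$. The argument must show that every such term, including those inside $\mathcal E_0$, is $\lesssim |J|/\lambda\lesssim 1$ uniformly along the induction. Otherwise the gluing step degrades the constants: each step would multiply the constant by $N^{\delta/10}$ rather than be compensated by $M_j^{-1}$. To avoid this, I would keep all mean terms at the top level $J$ itself, rather than summing them across subintervals. Only the genuinely bilinear errors would be propagated through the induction.
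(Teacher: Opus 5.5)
There is a genuine gap, and it is exactly the point the paper singles out as the one new difficulty on the torus. To bound $\mathcal E_1,\mathcal E_2$ you use ``$L^6_{t,x}$ norms bounded via $U^3_\Delta$''. The duality step you copy from (\ref{2.43}) for the $U^3_\Delta$ bound also contains the factor $\|e^{it\partial_{xx}}P_{\geq M_j}f\|_{L^6_{t,x}}$. On $\lambda\mathbb{T}$ the embedding $U^3_\Delta\subset L^6_{t,x}$ is not available with a uniform constant on intervals of length up to $\lambda$. A wave at frequency $\geq N^{\delta/2}$ circles the torus many times in that time, and Bourgain's example shows that the periodic $L^6$ Strichartz estimate must lose. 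What you can use is Bourgain's bound
\[
\|P_K e^{it\partial_{xx}}\phi\|_{L^6_{t,x}([0,1]\times\mathbb{T})}\lesssim \exp\Bigl(c\frac{\log K}{\log\log K}\Bigr)\|\phi\|_{L^2}\lesssim_{\epsilon} K^{\epsilon}\|\phi\|_{L^2}.
\]
After rescaling to $\lambda\mathbb{T}$ with $\lambda\sim N^{(1-s)/s}$, this costs a factor $N^{\epsilon'}$ per $L^6$ norm. As written, your bounds for $\mathcal E_1$, $\mathcal E_2$ and $\|\partial_x IP_{>M_j}u\|_{U^3_\Delta}$ are therefore unjustified, and the induction does not close. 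The issue you flag as the main obstacle is actually the easy part. Your sawtooth weight with $\psi'=2\delta_0-\frac{2}{\lambda}$ is a perfectly good substitute for the paper's cutoff $\chi$, and your $O(|J|/\lambda)$ bookkeeping matches (\ref{3.10}).

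The missing idea is to make every use of the lossy $L^6$ bound pay for itself with a genuine power gain. The paper does this in Claim~\ref{claim}, which proves $\||\partial_x Iu_h||u_h|^2\|_{L^2_{t,x}}\lesssim M_{j-1}^{-1/2}$ by splitting frequencies relative to $N$.
\begin{itemize}
\item When all factors are at frequency $\lesssim N$, the $N^{3\epsilon}$ loss is beaten by the factor $M_{j-1}^{-2}$ that Bernstein gives on the two undifferentiated factors. This works because $M_{j-1}\geq N^{\delta/2}$, so you need $\epsilon\ll\delta$.
\item When a factor lies above $N$, either the $K^{\epsilon}$ loss is absorbed by $\|P_K u\|_{L^2}\lesssim\frac{1}{m(K)K}$, or $L^6$ is avoided altogether by bilinear bounds at each dyadic scale.
\end{itemize}
This is why the dyadic estimate (\ref{3.6.1}) is part of the statement. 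It is an inductive input at the previous level, used in Claim~\ref{claim} and in (\ref{3.15}), not something to derive at the end as your plan does. With this ingredient added, the rest of your outline agrees with the paper. That includes the $N^{\delta/10}$ gluing, and (\ref{3.7}) via linear solutions with only an $O(1)$ bound because of the $|J|/\lambda$ term.
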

\begin{remark}
The additional term in $(\ref{3.6.1})$ is needed due to the fact that the Strichartz estimates in \cite{bourgain1993fourier} have a loss. The improved Strichartz estimates in \cite{skouloudis2026strichartz} are not enough to make the term $(\ref{3.6.1})$ unnecessary.
\end{remark}
\begin{proof}
The function $\frac{(x - y)}{|x - y|}$ is not well--defined on the torus, since $x, y \in \mathbb{R} / \lambda \mathbb{Z}$. Therefore, to define an interaction Morawetz potential, we treat $u$ as a function on $\mathbb{R}$ that is periodic of period $\lambda$. A consequence of this is that such a function does not lie in $L^{2}(\mathbb{R})$, so let $\chi \in C_{0}^{\infty}(\mathbb{R})$ be a set with compact support, $\chi(x) = 1$ on $0 \leq x \leq \lambda$, and let
\begin{equation}\label{3.8}
\aligned
M(t) = \int \int |P_{> 8M} Iu|^{2} \chi(y) \frac{(x - y)}{|x - y|} \chi(x) Im[\overline{P_{\leq M} u} \partial_{x} P_{\leq M} u] dx dy \\
+ \int \int |P_{\leq M} u|^{2} \chi(y) \frac{(x - y)}{|x - y|} \chi(x) Im[\overline{P_{> 8M} Iu} \partial_{x} P_{> 8M} Iu] dx dy.
\endaligned
\end{equation}
Taking the time derivative of $M(t)$ and integrating in time, then as in $(\ref{2.26})$,
\begin{equation}\label{3.9}
\aligned
4 \int_{0}^{T_{0}} \int \chi(x) |\partial_{x}(\overline{I P_{\geq M} u}(t, x) P_{\leq M/8} u(t, x))|^{2} dx dt \lesssim \| Iu \|_{L_{t}^{\infty} H_{x}^{1}([0, T_{0}] \times \mathbb{T})} \| u \|_{L_{t}^{\infty} L_{x}^{2}([0, T_{0}] \times \mathbb{T})}^{3} \\
- 2 \int_{0}^{T_{0}} \int Im[\overline{P_{\geq M} I u(t, y)} P_{\geq M} I \mathcal N] \chi(y) \frac{(x - y)}{|x - y|} \chi(x) Im[\overline{P_{\leq M/8} u} \partial_{x} P_{\leq M/8} u](t, x) dx dy dt \\
- 2 \int_{0}^{T_{0}} \int Im[\overline{P_{\leq M/8} I u(t, y)} P_{\leq M/8} I \mathcal N] \chi(y) \frac{(x - y)}{|x - y|} \chi(x) Im[\overline{P_{\geq M} Iu} \partial_{x} P_{\geq M} Iu](t, x) dx dy dt \\
+ \int_{0}^{T_{0}} \int |P_{\geq M} Iu(t, y)|^{2} \chi(y) \frac{(x - y)}{|x - y|} \chi(x) Re[\overline{P_{\leq M/8} u} \partial_{x} P_{\leq M/8} \mathcal N](t, x) dx dy \\
- \int_{0}^{T_{0}} \int |P_{\geq M} Iu(t, y)|^{2} \chi(y) \frac{(x - y)}{|x - y|} \chi(x) Re[\overline{P_{\leq M/8} \mathcal N} \partial_{x} P_{\leq M/8} u](t, x) dx dy \\
+ \int_{0}^{T_{0}} \int |P_{\leq M/8} u(t, y)|^{2} \chi(y) \frac{(x - y)}{|x - y|} \chi(x) Re[\overline{P_{\geq M} Iu} \partial_{x} P_{\geq M} I \mathcal N](t, x) dx dy \\
- \int_{0}^{T_{0}} \int |P_{\leq M/8} u(t, y)|^{2} \chi(y) \frac{(x - y)}{|x - y|} \chi(x) Re[\overline{P_{\geq M} I \mathcal N} \partial_{x} P_{\geq M} Iu](t, x) dx dy \\
+ \int_{0}^{T_{0}} \frac{1}{\lambda} O(\| Iu \|_{H^{1}(\mathbb{T})}^{2} \| u \|_{L^{2}(\mathbb{T})}^{2}) dt + \int_{0}^{T_{0}} \frac{1}{\lambda^{3}} O(\| u \|_{L^{2}(\mathbb{T})}^{4}) dt.
\endaligned
\end{equation}
The last term in $(\ref{3.9})$ arises from integrating by parts, when the derivative hits $\chi$. When $|T_{0}| \leq \lambda$, $E(Iu(t)) \leq 1$ and $\| u \|_{L^{2}} < \infty$,
\begin{equation}\label{3.10}
\int_{0}^{T_{0}} \frac{1}{\lambda} O(\| Iu \|_{H^{1}}^{2} \| u \|_{L^{2}}^{2}) dt + \int_{0}^{T_{0}} \frac{1}{\lambda^{3}} O(\| u \|_{L^{2}}^{4}) dt \lesssim 1.
\end{equation}
\begin{remark}
In $(\ref{3.9})$, $(\ref{3.10})$, and elsewhere in the section, $\| Iu \|_{L^{2}}$ or $\| Iu \|_{H^{1}}$ refers to the $L^{2}$ or $H^{1}$ norm on the torus, not on $\mathbb{R}$, since $u$ is a periodic function. It is straightforward to verify that if $f$ is a periodic function of period $\lambda$, and $\chi$ is the cut--off function in $(\ref{3.8})$,
\begin{equation}\label{3.11}
\int_{\mathbb{R}} \chi(x) |f(x)|^{p} dx \sim \| f \|_{L^{p}(\lambda \mathbb{T})}^{p}.
\end{equation}
\end{remark}

Now turn to the terms in $(\ref{3.9})$ with $\mathcal N$. The estimates in Claim $\ref{claim1}$ and Claim $\ref{claim2}$ for the Schr{\"o}dinger equation on $\mathbb{R}$ can be applied verbatim to give estimates on $\lambda \mathbb{T}$, as can the estimates in $(\ref{2.36})$ for Claim $\ref{claim3}$, except for one technical issue that will be discussed momentarily. The estimate of the analog of $(\ref{2.37})$ is slightly different, since in this case we need to estimate
\begin{equation}\label{3.11}
P_{\geq \frac{M}{8^{4}}} (\chi(x) \frac{(x - y)}{|x - y|}),
\end{equation}
whose kernel satisfies the bounds
\begin{equation}\label{3.12}
K(x, z) \lesssim_{N} M (1 + M |x - z|)^{-N} + O(\frac{1}{\lambda}) [\int_{0}^{1} |\chi'(z + (x - z)t)| dt].
\end{equation}
Comparing the contribution of the additional term in $(\ref{3.12})$ to the estimate of $(\ref{2.37})$, the additional term's contribution can be bounded by
\begin{equation}\label{3.13}
\lesssim \int_{0}^{T_{0}} \frac{1}{\lambda} \| Iu \|_{H^{1}}^{8} dt \lesssim 1.
\end{equation} 
Therefore, it is possible to apply most of the analysis in $(\ref{2.39.1})$, $(\ref{2.40})$, $(\ref{2.41})$ on $\lambda \mathbb{T}$ for $|T_{0}| \leq \lambda$.\medskip

The one new difficulty is that it is no longer possible to make the simple argument
\begin{equation}
\| u \|_{L_{t,x}^{6}} \lesssim \| u \|_{U_{\Delta}^{3}}.
\end{equation}
Indeed, \cite{bourgain1993fourier} showed that
\begin{equation}
\| e^{it \partial_{xx}} P_{N} \phi \|_{L_{t,x}^{6}([0, \frac{1}{\log N}] \times \mathbb{T})} \lesssim \| \phi \|_{L^{2}},
\end{equation}
is the best such estimate that one could hope for. Recently, \cite{skouloudis2026strichartz} proved
\begin{equation}
\| e^{it \partial_{xx}} P_{N} \phi \|_{L_{t,x}^{6}([0, \frac{1}{(\log N)^{C}}] \times \mathbb{T})} \lesssim \| \phi \|_{L^{2}},
\end{equation}
holds for some constant $C$. Here it is enough to use the Strichartz estimate of \cite{bourgain1993fourier} combined with the above biilinear estimates obtained from the Morawetz estimate. 

\begin{theorem}
There exists some constant $c > 0$ such that or any $N$,
\begin{equation}
\| P_{N} e^{it \partial_{xx}} \phi \|_{L_{t,x}^{6}([0, 1] \times \mathbb{T})} \lesssim \exp(c \frac{\log N}{\log \log N}) \| \phi \|_{L^{2}}.
\end{equation}
\end{theorem}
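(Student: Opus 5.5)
This is Bourgain's classical $L^6$ Strichartz estimate on $\mathbb{T}$, and I would prove it by the standard number-theoretic route: reduce it to a lattice-point count. Write $P_N e^{it\partial_{xx}}\phi = \sum_{|n|\sim N} a_n e^{i(nx - n^2 t)}$ with $\sum |a_n|^2 = \|\phi\|_{L^2}^2$. Then
\begin{equation*}
\| P_N e^{it\partial_{xx}}\phi\|_{L^6_{t,x}([0,1]\times\mathbb{T})}^6 = \| (P_N e^{it\partial_{xx}}\phi)^3\|_{L^2_{t,x}}^2 .
\end{equation*}
The cube is
\begin{equation*}
\sum_{(m,k)} c_{m,k} e^{i(mx - kt)}, \qquad c_{m,k} = \sum_{\substack{n_1+n_2+n_3 = m \\ n_1^2+n_2^2+n_3^2 = k}} a_{n_1}a_{n_2}a_{n_3}.
\end{equation*}
Here $m, k$ are integers and the time interval $[0,1]$ has length at most $2\pi$. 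So by Plancherel in $x$, together with orthogonality of $e^{-ikt}$ over a period in $t$ (after extending $[0,1]\subset[0,2\pi]$ and using positivity), we get $\|\cdot\|_{L^6}^6 \lesssim \sum_{m,k} |c_{m,k}|^2$.

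By Cauchy--Schwarz,
\begin{equation*}
|c_{m,k}|^2 \le r(m,k) \sum_{\substack{n_1+n_2+n_3=m \\ n_1^2+n_2^2+n_3^2=k}} |a_{n_1}a_{n_2}a_{n_3}|^2 ,
\end{equation*}
where $r(m,k)$ counts the triples $(n_1,n_2,n_3)$ with $|n_i|\lesssim N$ satisfying both equations. Summing over $(m,k)$ then gives
\begin{equation*}
\|\cdot\|_{L^6}^6 \le \Big(\sup_{m,k} r(m,k)\Big)\, \|\phi\|_{L^2}^6 .
\end{equation*}
The whole matter therefore reduces to the bound
\begin{equation*}
\sup_{m,k} r(m,k) \lesssim \exp\Big(C \frac{\log N}{\log\log N}\Big).
\end{equation*}
Taking sixth roots then gives the theorem with $c = C/6$.

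The counting step is the main obstacle, and I would handle it by reducing to a divisor problem. Eliminate $n_3 = m - n_1 - n_2$ and substitute into the quadratic equation. Completing the square, in the variables $p = 3n_1 + \ldots$ (that is, diagonalizing the binary form $n_1^2+n_2^2+(m-n_1-n_2)^2$), turns the equation into one of the form
\begin{equation*}
X^2 + 3Y^2 = K,
\end{equation*}
where $X = 3n_1 - m + \ldots$ and $Y$ are integer linear combinations of $n_1, n_2$, and $K = 6k - 2m^2$ up to a fixed constant factor, with $|K| \lesssim N^2$. The map $(n_1,n_2)\mapsto(X,Y)$ is injective. So $r(m,k)$ is at most the number of representations of $K$ by the form $X^2+3Y^2$.

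To count these, factor in the ring of integers $\mathbb{Z}[\omega]$ of $\mathbb{Q}(\sqrt{-3})$, which is a UFD with finitely many units. The number of representations is then bounded by a constant times the divisor function $d(K)$. If $K=0$, then $X=Y=0$, which forces at most one solution. The classical divisor bound
\begin{equation*}
d(K) \le \exp\Big(C\frac{\log |K|}{\log\log |K|}\Big)
\end{equation*}
with $|K|\lesssim N^2$ then yields the claimed bound. I would quote the divisor bound rather than reprove it; its proof splits primes at the threshold $\log K/\log\log K$ in the factorization of $K$.

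The only care needed elsewhere is the $t$-orthogonality on $[0,1]$ instead of a full period. I would dominate $1_{[0,1]}$ by $1_{[0,2\pi]}$ and use exact orthogonality there.
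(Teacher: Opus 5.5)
Your argument is correct, and it is essentially the proof in Bourgain's paper, which the paper simply cites without reproducing it: Plancherel in $x$ plus time-orthogonality over full periods reduces the estimate to $\sup_{m,k} r(m,k)$, the system $n_1+n_2+n_3=m$, $n_1^2+n_2^2+n_3^2=k$ becomes a representation problem for $X^2+3Y^2$ over $\mathbb{Z}[\omega]$, and the divisor bound finishes. The only cosmetic point is normalization: with $\mathbb{T}=\mathbb{R}/\mathbb{Z}$ the temporal frequencies are $4\pi^2 k$, so you cover $[0,1]$ by finitely many periods of length $1/(2\pi)$ rather than by $[0,2\pi]$, which changes nothing.
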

\begin{proof}
See \cite{bourgain1993fourier}.
\end{proof}
Observe that $\exp(c \frac{\log N}{\log \log N}) \lesssim_{\epsilon} N^{\epsilon}$ for any $\epsilon > 0$. It is easy to apply this result to the torus $\lambda \mathbb{T}$. Redoing the scaling in $(\ref{3.2})$ with $\lambda$ replaced by $\lambda^{-1}$,
\begin{equation}
\| P_{N} e^{it \partial_{x}} \frac{1}{\lambda^{1/2}} \phi(\frac{x}{\lambda}) \|_{L_{t,x}^{6}([0, \lambda^{-2}] \times \lambda \mathbb{T})} = \| P_{\lambda N} e^{it \partial_{x}} \frac{1}{\lambda^{1/2}} \phi(x) \|_{L_{t,x}^{6}([0, 1] \times \mathbb{T})} \lesssim \lambda^{\epsilon} N^{\epsilon} \| \phi \|_{L^{2}} \lesssim N^{\epsilon'} \| \phi \|_{L^{2}},
\end{equation}
where $\epsilon' = \frac{1 - s}{s} \epsilon + \epsilon$. Since $\epsilon > 0$ is arbitrary, it is possible to choose any $\epsilon' > 0$ as well.

\begin{claim}\label{claim}
If $J$ is an interval and $\| \partial_{x} I u_{h} \|_{U_{\Delta}^{3}(J \times \mathbb{T})} \lesssim 1$ with $E(Iu(t)) \leq 1$ on $J$, $u_{h} = P_{\geq M_{j - 1}} u$,
\begin{equation}
\| |\partial_{x} Iu_{h}| |u_{h}|^{2} \|_{L_{t,x}^{2}(J \times \mathbb{T})} \lesssim \frac{1}{M_{j - 1}^{1/2}}.
\end{equation}
\end{claim}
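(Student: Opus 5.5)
We need the bound $\| |\partial_{x} Iu_{h}| |u_{h}|^{2} \|_{L^{2}_{t,x}} \lesssim M_{j-1}^{-1/2}$ on $J \times \mathbb{T}$. The plan is to trade the lossy $L^{6}_{t,x}$ Strichartz estimate for a combination of the $U^{3}_{\Delta}$ hypothesis, the energy bound, and Bernstein. The first step is Hölder in the form
\begin{equation*}
\| |\partial_{x} Iu_{h}| |u_{h}|^{2} \|_{L^{2}_{t,x}} \leq \| \partial_{x} Iu_{h} \|_{L^{6}_{t,x}} \| u_{h} \|_{L^{6}_{t,x}}^{2},
\end{equation*}
which requires $L^{6}_{t,x}$ control on both factors. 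Since $L^{6}_{t,x}$ does not follow from $U^{3}_{\Delta}$ on $\mathbb{T}$ without loss, we decompose $J$ into unit-length subintervals, or into intervals of length $\lambda^{-2}$ in the rescaled picture. On each such subinterval we transfer Bourgain's estimate with the $\exp(c\log N/\log\log N)\lesssim N^{\epsilon}$ loss to $U^{3}_{\Delta}$ atoms in the standard way. This gives
\begin{equation*}
\| P_{K} v \|_{L^{6}_{t,x}} \lesssim K^{\epsilon} \| P_{K} v \|_{U^{3}_{\Delta}}
\end{equation*}
at each dyadic frequency $K$, with a loss that is polynomial in $N$ and arbitrarily small.

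The second step is to dispose of the loss by spending derivatives. We split $u_{h}$ dyadically, $u_{h} = \sum_{K \geq M_{j-1}} P_{K} u$. Using $|\partial_{x} I P_{K} u| \sim K m(K) |P_{K} u|$, we get
\begin{equation*}
\| P_{K} u \|_{U^{3}_{\Delta}} \lesssim \frac{1}{K m(K)} \| \partial_{x} I P_{K} u \|_{U^{3}_{\Delta}} \lesssim \frac{1}{K m(K)}.
\end{equation*}
Since $m(K)K \gtrsim K^{s} N^{1-s} \geq K^{s}$, each $u_{h}$ factor in $L^{6}_{t,x}$ then contributes at most $K^{\epsilon - s}$. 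Summing over $K \geq M_{j-1}$ gives $\| u_{h} \|_{L^{6}_{t,x}} \lesssim M_{j-1}^{\epsilon - s}$ for the low part of the range, and better where $m \equiv 1$: there the factor is $K^{\epsilon-1}$, since $M_{j-1} \leq N^{\delta} \ll N$. In the same way $\| \partial_{x} I u_{h} \|_{L^{6}_{t,x}} \lesssim M_{j-1}^{\epsilon}$, or $N^{\epsilon'}$ uniformly. Squaring the $u_{h}$ factor yields roughly $M_{j-1}^{-2+3\epsilon}$ whenever the relevant frequencies lie below $N$. This is far better than the claimed $M_{j-1}^{-1/2}$, so the high range $K > N$ can afford a cruder treatment.

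The third step handles the high range $K \gtrsim N$. There we do not use Strichartz at all. Instead we use Bernstein/Sobolev on each time slice, namely $\| P_{K} u \|_{L^{\infty}_{x}} \lesssim K^{1/2} \| P_{K} u \|_{L^{2}_{x}} \lesssim K^{-1/2} (m(K))^{-1}$. We place both $u_{h}$ factors in $L^{\infty}_{t,x}$ and put $\partial_{x} Iu_{h}$ in $L^{2}_{t,x}$, using $|J| \lesssim 1$ on the subinterval and $E(Iu) \leq 1$. This produces $K^{-1}m(K)^{-2} \lesssim K^{-1} (K/N)^{2(1-s)}$, which is harmful only because $s < 1$. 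I would combine the two methods by interpolation between the $L^{6}$ route and the $L^{\infty}$ route so that a net negative power of $M_{j-1}$ survives. The $L^{6}$ route is the main one, and the lossy factor $N^{\epsilon'}$ is absorbed by choosing $\epsilon' \ll \delta$ with $M_{j-1} \geq N^{\delta/2}$.

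The final step reassembles $J$ from the unit subintervals, exploiting the $U^{3}_{\Delta}$ structure (the $\ell^{3}$-summability of the atoms) rather than counting intervals. I expect this to be the main obstacle. Since $|J|$ may be as large as $\lambda$, a naive sum of the local $L^{2}$ bounds gains a factor $|J|^{1/2}$, which would be fatal. So the global $U^{3}_{\Delta}$ bound has to be converted into an $L^{6}$ bound on all of $J$ with only an $N^{\epsilon}$ loss. If this fails, I would instead pair one $u_{h}$ factor with the bilinear Morawetz estimate (\ref{3.6}) and (\ref{3.6.1}) at the scales between $M_{j-1}$ and $N^{\delta}$, as in Claim \ref{claim2}. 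That route gives the $M_{j-1}^{-1/2}$ gain directly from the $1/M$ factor in the Morawetz bound, with the remaining factors handled by Bernstein and $E(Iu)\leq 1$.
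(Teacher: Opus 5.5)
\textbf{There is a genuine gap: the high-frequency part of the factor $\partial_x I u_h$ is not controlled.}

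Your low-frequency step is fine and coincides with the paper's first term. There all three factors sit at frequency $\lesssim N$, and you use H\"older into $L^{6}_{t,x}$, Bourgain's estimate with an $N^{\epsilon}$ loss, and $\|P_{K}u\|_{U^{3}_{\Delta}} \lesssim (K m(K))^{-1}$.

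The problem is $\partial_{x} I u_{h}$ at frequencies $K \gg N$. Your assertion $\|\partial_{x} I u_{h}\|_{L^{6}_{t,x}} \lesssim N^{\epsilon'}$ ``uniformly'' does not follow from the hypotheses.
\begin{itemize}
\item At frequency $K$ the Strichartz loss is $(\lambda K)^{\epsilon}$.
\item The hypothesis gives only $\|\partial_{x} I P_{K} u\|_{U^{3}_{\Delta}} \lesssim 1$ for each $K$, with no decay in $K$.
\item So $\sum_{K > N} K^{\epsilon} \|\partial_{x} I P_{K} u\|_{U^{3}_{\Delta}}$ is uncontrolled. You would need $\langle \partial_{x} \rangle^{\epsilon} \partial_{x} I u_{h}$ in $U^{3}_{\Delta}$, which $E(Iu) \leq 1$ does not give.
\end{itemize}
Your substitute for $K \gtrsim N$ fails for two reasons. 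First, $\|P_{K} u\|_{L^{\infty}_{x}} \lesssim K^{1/2} \|P_{K} u\|_{L^{2}_{x}} \lesssim K^{1/2 - s} N^{s-1}$ is not summable over $K > N$ when $s \leq 1/2$, which is exactly the range that matters ($s > 1/3$). Second, putting $\partial_{x} I u_{h}$ in $L^{2}_{t,x}$ costs $|J|^{1/2}$, which can be as large as $\lambda^{1/2}$. Interpolating between two divergent bounds does not produce a convergent dyadic sum.

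\textbf{The missing idea} is to split according to which factor carries the top frequency, as the paper does.
\begin{itemize}
\item If a $u_{h}$ factor is at frequency $N_{2} \geq N$ and the $\partial_{x} I$ factor is at $N_{3} \lesssim N_{2}$, put all three factors in $L^{6}$. The loss $N_{3}^{\epsilon} \lesssim N_{2}^{\epsilon}$ is paid for by the decay $(m(N_{2}) N_{2})^{-1} = N_{2}^{-s} N^{s-1}$ of that factor.
\item If $\partial_{x} I u_{h}$ is at frequency at least $8$ times that of both $u_{h}$ factors, do not use Strichartz at all. Use the loss-free bilinear bounds (\ref{3.6}), (\ref{3.6.1}) from the Morawetz induction, and put the remaining $u_{h}$ factor in $L^{\infty}_{t,x}$. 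The growth $N_{1}^{1/2-s} N^{s-1}$ of that $L^{\infty}$ factor is then absorbed by the bilinear decay at $N_{2} \geq N_{1}$.
\end{itemize}
The rate $M_{j-1}^{-1/2}$ in the claim comes from $\|P_{\leq N} u_{h}\|_{L^{\infty}_{t,x}} \lesssim M_{j-1}^{-1/2}$ (Bernstein plus energy), in the term with $\partial_{x} I P_{> 8N} u_{h}$. It does not come from the $1/M$ in the Morawetz identity. You invoke the bilinear estimate only as a fallback for a different issue.

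\textbf{That issue is not a real obstacle.} By the scaling (\ref{3.2}), Bourgain's unit-time estimate on $\mathbb{T}$ becomes an estimate on time intervals of length $\lambda^{2}$ on $\lambda \mathbb{T}$, with frequency $K$ corresponding to $\lambda K$. Since $|J| \leq \lambda$ in Proposition \ref{p3.2}, the $L^{6}$ bounds hold on all of $J$ at once, and no subdivision is needed. Cutting $J$ into pieces of length $\lambda^{-2}$ scales the wrong way and would create exactly the counting loss you are worried about.
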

\begin{proof}[Proof of claim]
Decompose
\begin{equation}
\aligned
\| |\partial_{x} Iu_{h}| |u_{h}|^{2} \|_{L_{t,x}^{2}} \lesssim \| |\partial_{x} I P_{\leq 8N} u_{h}| |P_{\leq N} u_{h}|^{2} \|_{L_{t,x}^{2}} + \| |\partial_{x} I P_{> 8N} u_{h}| |P_{\leq N} u_{h}|^{2} \|_{L_{t,x}^{2}} \\
+ \sum_{N_{1} \leq N_{2}, N_{3} \lesssim N_{2}, N_{2} \geq N} \| \partial_{x} I P_{N_{3}} u_{h} \|_{L_{t,x}^{6}} \| P_{N_{2}} u_{h} \|_{L_{t,x}^{6}} \| P_{N_{1}} u_{h} \|_{L_{t,x}^{6}} \\
+ \sum_{N_{1} \leq N_{2}, N_{2} \geq N} \| |P_{N_{1}} u_{h}| |P_{N_{2}} u_{h}| |\partial_{x} I P_{\geq 8N_{2}} u_{h}| \|_{L_{t,x}^{2}}.
\endaligned
\end{equation}

From \cite{bourgain1993fourier} and Bernstein's inequality, along with the fact that $M_{j} \geq N^{\delta/2}$ for any $j$,
\begin{equation}\label{badterm1}
\aligned
\| |\partial_{x} I P_{\leq 8N} u_{h}| |P_{\leq N} u_{h}|^{2} \|_{L_{t,x}^{2}} \lesssim \| \partial_{x} I P_{\leq 8N} u_{h} \|_{L_{t,x}^{6}} \| P_{\leq N} u_{h} \|_{L_{t,x}^{6}} \| P_{\leq N} u_{h} \|_{L_{t,x}^{6}} \lesssim N^{3 \epsilon} \| \partial_{x} Iu_{h} \|_{U_{\Delta}^{3}}^{3} \frac{1}{M_{j - 1}^{2}} \lesssim \frac{1}{M_{j - 1}}.
\endaligned
\end{equation}
Using bilinear estimates and Bernstein's inequality,
\begin{equation}\label{badterm2}
\| |\partial_{x} I P_{> 8N} u_{h}| |P_{\leq N} u_{h}|^{2} \|_{L_{t,x}^{2}} \lesssim \| (\partial_{x} I P_{> 8N} u_{h})(P_{\leq N} u_{h}) \|_{L_{t,x}^{2}} \| P_{\leq N} u_{h} \|_{L_{t,x}^{\infty}} \lesssim \frac{1}{M_{j - 1}^{1/2}}.
\end{equation}
\begin{equation}
\aligned
\sum_{N_{1} \leq N_{2}, N_{3} \lesssim N_{2}, N_{2} \geq N} \| \partial_{x} I P_{N_{3}} u_{h} \|_{L_{t,x}^{6}} \| P_{N_{2}} u_{h} \|_{L_{t,x}^{6}} \| P_{N_{1}} u_{h} \|_{L_{t,x}^{6}} \\ \lesssim \sum_{N_{1} \leq N_{2}, N_{2} \geq N, N_{3} \lesssim N_{2}} N_{3}^{\epsilon} \frac{N_{2}^{\epsilon}}{m(N_{2}) N_{2}} \frac{N_{1}^{\epsilon}}{m(N_{1}) N_{1}} \lesssim \frac{1}{N^{1 - 2 \epsilon}} \frac{1}{M_{j - 1}^{1 - \epsilon}}.
\endaligned
\end{equation}

\begin{equation}
\aligned
\sum_{N_{1} \leq N_{2}, N_{2} \geq N} \| |P_{N_{1}} u_{h}| |P_{N_{2}} u_{h}| |\partial_{x} I P_{\geq 8N_{2}} u_{h}| \|_{L_{t,x}^{2}}  \\ \lesssim \sum_{N_{1} \leq N_{2}, N_{2} \geq N} \| |P_{N_{2}} u_{h}| |\partial_{x} I P_{\geq 8 N_{2}} u_{h}| \|_{L_{t,x}^{2}} \| P_{N_{1}} u_{h} \|_{L_{t,x}^{\infty}} \lesssim \sum_{N_{2} \geq N, N_{1} \leq N_{2}} \frac{1}{m(N_{2}) N_{2}} \frac{N_{1}^{1/2}}{m(N_{1}) N_{1}} \lesssim \frac{1}{N}.
\endaligned
\end{equation}

A similar computation shows that if $\| v \|_{U_{\Delta}^{3}(J \times \mathbb{T})} \lesssim 1$,
\begin{equation}\label{vestimate}
\| |v| |u_{h}|^{2} \|_{L_{t,x}^{2}} \lesssim \frac{1}{M_{j - 1}^{1/2}}.
\end{equation}
\end{proof}
Plugging Claim $\ref{claim}$ into the computations closes the inductive step for the bilinear estimate.\medskip

The computations obtaining $(\ref{3.6.1})$ are similar, and simply utilize Bernstein's inequality, which gives 
\begin{equation}
\| P_{2^{k} N^{\delta}} u \|_{L_{x}^{2}} \lesssim \frac{1}{m(2^{k} N^{\delta}) 2^{k} N^{\delta}} E(Iu(t))^{1/2} \lesssim \frac{1}{m(2^{k} N^{\delta}) 2^{k} N^{\delta}}.
\end{equation}

Now turn to $(\ref{3.7})$. Replacing $I P_{\geq M} u$ with $P_{2^{k} M} e^{it \partial_{xx}} f$ for $\| f \|_{L^{2}} = 1$ gives
\begin{equation}\label{3.12.1}
\aligned
4 \int_{0}^{T_{0}} \int \chi(x) |\partial_{x}(\overline{e^{it \partial_{x}} P_{2^{k} M} f}(t, x) P_{\leq M/8} u(t, x))|^{2} dx dt \\ \lesssim \| P_{2^{k} M} f \|_{H^{1}} \| P_{2^{k} M} f \|_{L^{2}} \| u \|_{L_{t}^{\infty} L_{x}^{2}}^{2} + \| Iu \|_{L_{t}^{\infty} H^{1}} \| u \|_{L_{t}^{\infty} L^{2}} \| P_{2^{k} M} f \|_{L^{2}}^{2} \\
- 2 \int_{0}^{T_{0}} \int Im[\overline{P_{\leq M/8} I u(t, y)} P_{\leq M/8} I \mathcal N] \chi(y) \frac{(x - y)}{|x - y|} \chi(x) Im[\overline{P_{2^{k} M} e^{it \partial_{xx}} f} \partial_{x} P_{2^{k} M} e^{it \partial_{xx}} f](t, x) dx dy dt \\
+ \int_{0}^{T_{0}} \int |e^{it \partial_{xx}} P_{2^{k} M} f(t, y)|^{2} \chi(y) \frac{(x - y)}{|x - y|} \chi(x) Re[\overline{P_{\leq M/8} u} \partial_{x} P_{\leq M/8} \mathcal N](t, x) dx dy \\
- \int_{0}^{T_{0}} \int |e^{it \partial_{xx}} P_{2^{k} M} f(t, y)|^{2}\chi(y) \frac{(x - y)}{|x - y|} \chi(x) Re[\overline{P_{\leq M/8} \mathcal N} \partial_{x} P_{\leq M/8} u](t, x) dx dy \\
+ \int_{0}^{T_{0}} \frac{1}{\lambda} O(\| P_{2^{k} M} f \|_{H^{1}}^{2} \| u \|_{L^{2}}^{2}) dt + \int_{0}^{T_{0}} \frac{1}{\lambda} O(\| P_{2^{k} M} f \|_{L^{2}}^{2} \| Iu \|_{H^{1}}^{2}) + \int_{0}^{T_{0}} \frac{1}{\lambda^{3}} O(\| u \|_{L^{2}}^{2} \| f \|_{L^{2}}^{2}) dt.
\endaligned
\end{equation}

Applying the analysis in $(\ref{2.39.1})$, $(\ref{2.40})$, $(\ref{2.41})$,
\begin{equation}\label{3.12.2}
\aligned
- 2 \int_{0}^{T_{0}} \int Im[\overline{P_{\leq M/8} I u(t, y)} P_{\leq M/8} I \mathcal N] \chi(y) \frac{(x - y)}{|x - y|} \chi(x) Im[\overline{P_{2^{k} M} e^{it \partial_{xx}} f} \partial_{x} P_{2^{k} M} e^{it \partial_{xx}} f](t, x) dx dy dt \\
+ \int_{0}^{T_{0}} \int |e^{it \partial_{xx}} P_{2^{k} M} f(t, y)|^{2} \chi(y) \frac{(x - y)}{|x - y|} \chi(x) Re[\overline{P_{\leq M/8} u} \partial_{x} P_{\leq M/8} \mathcal N](t, x) dx dy \\
- \int_{0}^{T_{0}} \int |e^{it \partial_{xx}} P_{2^{k} M} f(t, y)|^{2}\chi(y) \frac{(x - y)}{|x - y|} \chi(x) Re[\overline{P_{\leq M/8} \mathcal N} \partial_{x} P_{\leq M/8} u](t, x) dx dy \\
\lesssim (M^{-1} N^{\delta/10} + 1) 2^{k} M \| P_{2^{k} M} f \|_{L^{2}}^{2}.
\endaligned
\end{equation}
Also, if $|T_{0}| \leq \lambda$,
\begin{equation}\label{3.12.3}
\aligned
\int_{0}^{T_{0}} \frac{1}{\lambda} O(\| P_{2^{k} M} f \|_{H^{1}}^{2} \| u \|_{L^{2}}^{2}) dt + \int_{0}^{T_{0}} \frac{1}{\lambda} O(\| P_{2^{k} M} f \|_{L^{2}}^{2} \| Iu \|_{H^{1}}^{2}) \\ + \int_{0}^{T_{0}} \frac{1}{\lambda^{3}} O(\| u \|_{L^{2}}^{2} \| f \|_{L^{2}}^{2}) dt
\lesssim 2^{2k} M^{2} \| P_{2^{k} M} f \|_{L^{2}}^{2}.
\endaligned
\end{equation}
Therefore,
\begin{equation}\label{3.12.4}
\aligned
\int_{0}^{T_{0}} \int \chi(x) |\partial_{x}(\overline{e^{it \partial_{x}} P_{2^{k} M} f}(t, x) P_{\leq M/8} u(t, x))|^{2} dx dt \lesssim 2^{2k} M^{2} \| P_{2^{k} M} f \|_{L^{2}}^{2},
\endaligned
\end{equation}
and therefore,
\begin{equation}\label{3.13}
\aligned
\| (P_{2^{k} M} e^{it \partial_{xx}} f)(P_{\leq M/8} u) \|_{L_{t,x}^{2}}^{2} \lesssim \| P_{2^{k} M} f \|_{L^{2}}^{2}.
\endaligned
\end{equation}
Summing over $k \geq 0$,
\begin{equation}\label{3.14}
\| (P_{\geq M} e^{it \partial_{xx}} f)(P_{\leq M/8} u) \|_{L_{t,x}^{2}}^{2} \lesssim 1.
\end{equation}
Finally, if $v \in V_{\Delta}^{3/2} \subset U_{\Delta}^{2}$, $\| v \|_{V_{\Delta}^{3/2}} = 1$, $\hat{v}$ is supported on $|\xi| \geq M$, using $(\ref{3.6.1})$ at the previous level of the induction in the second term in $(\ref{3.15})$,
\begin{equation}\label{3.15}
\aligned
\| v \partial_{x} (P_{\geq M} I(|u|^{4} u)) \|_{L_{t,x}^{1}} \lesssim  M^{-2} N^{\delta/10}.
\endaligned
\end{equation}
Arguing by induction proves $(\ref{3.6})$ and $(\ref{3.7})$. This proves the proposition.

\end{proof}

Now we are ready to prove global well--posedness, Theorem $\ref{t3.1}$. The proof uses a bootstrap argument. Let $J$ be an interval of length $\leq \lambda$ such that $E(Iu(t)) \leq 1$ on $J$.
\begin{equation}\label{3.16}
\aligned
\frac{d}{dt} E(Iu(t)) = \langle Iu_{t}, -\partial_{xx} Iu + |Iu|^{4} (Iu) \rangle = \langle i I(\partial_{xx} u - |u|^{4} u), -\partial_{xx} Iu + |Iu|^{4} (Iu) \rangle \\
= \langle i \partial_{xx} Iu, |Iu|^{4} (Iu) - I(|u|^{4} u) \rangle - \langle i I(|u|^{4} u), |Iu|^{4} (Iu) \rangle.
\endaligned
\end{equation}
The $10$--linear term is easier, so that is the term we will begin with.
\begin{equation}\label{3.17}
\langle -i I(|u|^{4} u), |Iu|^{4} (Iu) \rangle = \langle -i I(|u|^{4} u), |Iu|^{4} (Iu) - I(|u|^{4} u) \rangle.
\end{equation}
Decompose $u = u_{l} + u_{h}$, $u_{l} = P_{\leq N/16} u$. Let $F_{j}$ denote the terms in $|Iu|^{4} (Iu) - I(|u|^{4} u)$ with $j$ $u_{h}'s$ and $5 - j$ $u_{l}$'s.
\begin{equation}\label{3.18}
|Iu_{l}|^{4} (Iu_{l}) - I(|u_{l}|^{4} u_{l}) = F_{0} \equiv 0.
\end{equation}
By Fourier support computations,
\begin{equation}\label{3.19}
\langle -i I(|u_{l}|^{4} u_{l}), F_{1} \rangle \equiv 0.
\end{equation}
Therefore,
\begin{equation}\label{3.20}
\int_{I} \langle -i I(|u|^{4} u), F_{1} \rangle dt \lesssim \| (u_{h})u_{l} \|_{L_{x,t}^{2}}^{2} \| Iu \|_{L_{t}^{\infty} \dot{H}^{1}}^{6} + \| u_{h} \|_{L_{t,x}^{6}}^{6} \| Iu \|_{L_{t}^{\infty} \dot{H}^{1}}^{4} \lesssim \frac{1}{N^{2}}.
\end{equation}
For $F_{2}$, $F_{3}$, $F_{4}$, $F_{5}$, we have at least two $u_{h}$ terms.
\begin{equation}\label{3.21}
\aligned
\int_{I} \langle -i I(|u_{h}|^{4} u_{h}), F_{5} \rangle dt \lesssim \| \partial_{x} I(|u_{h}|^{4} u_{h}) \|_{L_{t}^{3} L_{x}^{1}} \| u_{h} \|_{L_{t,x}^{6}}^{4} \| u_{h} \|_{L_{t}^{\infty} L_{x}^{3}} \\ \lesssim \| \partial_{x} Iu \|_{L_{t}^{\infty} L_{x}^{2}} \| u_{h} \|_{L_{t}^{6} \dot{H}^{1/3, 6}}^{2} \| u_{h} \|_{L_{t}^{\infty} H_{x}^{1/3}}^{2} \| u_{h} \|_{L_{t,x}^{6}}^{4} \| u_{h} \|_{L_{t}^{\infty} L_{x}^{3}} \lesssim \frac{1}{N^{2}}.
\endaligned
\end{equation}
Meanwhile,
\begin{equation}\label{3.22}
\aligned
\int_{I} \langle -i I(|u_{l}|^{4} u_{l}), F_{2} \rangle dt \lesssim \| (u_{h})(u_{l}) \|_{L_{t,x}^{2}}^{2} \| Iu \|_{L_{t}^{\infty} \dot{H}^{1}}^{6} \lesssim \frac{1}{N^{2}}.
\endaligned
\end{equation}
The intermediate terms may be handled in a similar manner.\medskip

Now estimate the $6$--linear term. We consider three separate cases.\medskip

\noindent \textbf{Case 1, $F_{0}$:}
\begin{equation}\label{3.23}
\langle -i I \partial_{xx} u, I(|u_{l}|^{4} u_{l}) - |Iu_{l}|^{4} (Iu_{l}) \rangle \equiv 0.
\end{equation}

\noindent \textbf{Case 2, $F_{1}$:}
In this case we use
\begin{equation}\label{3.24}
|m(\xi + \eta) - m(\xi)| \lesssim \frac{|\eta|}{|\xi|} |m(\xi)|.
\end{equation}
Therefore,
\begin{equation}\label{3.25}
\aligned
\int_{J} \langle -i I \partial_{xx} u, F_{1} \rangle dt \lesssim \frac{1}{N} \| (\partial_{x} I P_{> N/4} u)(P_{\leq N/8} u) \|_{L_{x, t}^{2}}^{2} \| \partial_{x} P_{\leq N^{\delta}} u \|_{L^{\infty}} \| Iu \|_{L^{\infty}} \\
+ \frac{1}{N} \| (\partial_{x} I P_{> N/4} u)(P_{\leq N/8} u) \|_{L_{x,t}^{2}} \| \partial_{x} P_{N^{\delta} \leq \cdot \leq N/8} u \|_{L_{t,x}^{6}} \| Iu \|_{L_{t,x}^{\infty}}^{3/2} \\ \times \| P_{> N/4} u \|_{L_{t,x}^{6}}^{1/2} \| (P_{> N/4} u)(P_{\leq N/8} u) \|_{L_{t,x}^{2}}^{1/2} \lesssim \frac{1}{N^{1 - \delta/2 - 6 \epsilon}}.
\endaligned
\end{equation}

\noindent \textbf{Case 3, $F_{2}$, ..., $F_{5}$:} In this case there are at least two $u_{h}$'s in $F_{j}$, $j = 2, 3, 4, 5$.
In this case,
\begin{equation}\label{3.26}
\aligned
\int_{J} \langle -i \partial_{xx} Iu, F_{2} + ... + F_{5} \rangle dt \lesssim \| (\partial_{x} Iu_{h}) u_{\leq N/64} \|_{L_{t,x}^{2}} \| u_{h} (u_{\leq N/64}) \|_{L_{t,x}^{2}} \| \partial_{x} Iu_{\leq N^{\delta}} \|_{L_{t,x}^{\infty}} \| Iu \|_{L_{t,x}^{\infty}} \\
+ \| (\partial_{x} Iu_{h}) |u_{h}| |P_{\geq N^{\delta}} u| \|_{L_{t,x}^{2}}  \| |\partial_{x} Iu_{\geq N^{\delta}}| |u_{\geq N^{\delta}}|^{2} \|_{L_{t,x}^{2}}  \lesssim \frac{1}{N^{1 - 6 \epsilon}}.
\endaligned
\end{equation}

Thus, if $t \in [0, \lambda^{2} T]$, when $s > \frac{1}{2}$, choosing $\delta(s) > 0$ and $\epsilon(s) > 0$ sufficiently small,
\begin{equation}\label{3.27}
E(Iu(t)) - E(Iu(0)) \lesssim \frac{1}{N^{1 - \delta/2}} \lambda T \sim \frac{N^{\frac{1 - s}{s}} T}{N^{1 - \delta/2 - 6 \epsilon}} \ll 1.
\end{equation}
This proves the Theorem $\ref{t3.1}$.

\section{Modified energy}
To prove Theorem $\ref{t1.1}$, we combine the long time Strichartz estimates in Proposition $\ref{p3.2}$ with an improvement over the energy increment in \cite{li2011global}. The idea is roughly as follows. Suppose it were possible to ignore the contribution of the $6$--linear term
\begin{equation}\label{4.1}
\langle i \partial_{xx} Iu, |Iu|^{4} (Iu) - I(|u|^{4} u) \rangle.
\end{equation}
Then plugging $(\ref{3.20})$ and $(\ref{3.21})$ into $(\ref{3.27})$,
\begin{equation}\label{4.2}
E(Iu(t)) - E(Iu(0)) \lesssim \frac{1}{N^{2}} \lambda T \sim \frac{N^{\frac{1 - s}{s}} T}{N^{2-}} \ll 1,
\end{equation}
when $s > \frac{1}{3}$.\medskip

The goal is to define a modified energy that is close to the energy $E(Iu(t))$, but has a smaller contribution from the $6$--linear terms. The key to doing this is to have a modified energy that provides better cancellation on the non--resonant terms. This modified energy is taken from \cite{li2011global}. The long time Strichartz estimates in Proposition $\ref{p3.2}$ substantially simplify the calculations.\medskip

Let
\begin{equation}\label{4.3}
\{ k_{1}^{\ast}, k_{2}^{\ast}, ..., k_{6}^{\ast} \} = \{ k_{1}, ..., k_{6} \},
\end{equation}
where
\begin{equation}\label{4.4}
| k_{1}^{\ast}| \geq |k_{2}^{\ast}| \geq ... \geq |k_{6}^{\ast}|.
\end{equation}

Now let $\Lambda_{n}(M_{n}, f_{1}, ..., f_{n}) = \Lambda_{n}(f, \bar{f}, f, \bar{f}, ..., f, \bar{f})$ be the multilinear operator
\begin{equation}\label{4.5}
\Lambda_{n}(M_{n}, f_{1}, ..., f_{n}) = \int_{\Gamma_{n}} M_{n}(k_{1}, ..., k_{n}) \prod_{j = 1}^{n} \widehat{f_{j}(k_{j}, t)} (dk_{1})_{\lambda} \cdots (dk_{n - 1})_{\lambda},
\end{equation}
where
\begin{equation}\label{4.6}
\Gamma_{n} = \{ (k_{1}, ..., k_{n}) : k_{1} + ... + k_{n} = 0 \}.
\end{equation}
In general, let 
\begin{equation}\label{4.7}
X_{j}(M_{n}) = M_{n}(k_{1}, ..., k_{j - 1}, k_{j} + ... + k_{j + 4}, k_{j + 5}, ..., k_{n + 4}).
\end{equation}

By direct computation,
\begin{equation}\label{4.8}
E_{I}^{1}(u(t)) = \Lambda_{2}(\sigma_{2}) + \Lambda_{6}(\sigma_{6}), \qquad \sigma_{2} = -\frac{1}{2} m(k_{1}) m(k_{2}) k_{1} k_{2}, \qquad \sigma_{6} = \frac{1}{6} m(k_{1}) \cdots m(k_{6}),
\end{equation}
and
\begin{equation}\label{4.9}
\frac{d}{dt} E_{I}^{1}(u(t)) = \Lambda_{6}(M_{6}) + \Lambda_{10}(M_{10}),
\end{equation}
where
\begin{equation}\label{4.10}
M_{6} = \frac{i}{6} \sum_{j = 1}^{6} (-1)^{j + 1} m^{2}(k_{j}) k_{j}^{2} + \sigma_{6} \alpha_{6} = M_{6}^{1} + M_{6}^{2}, \qquad \alpha_{6} = i \sum_{j = 1}^{6} (-1)^{j} k_{j}^{2}, \qquad \text{and} \qquad M_{10} = i \sum_{j = 1}^{6} (-1)^{j} X_{j}(\sigma_{6}).
\end{equation}

The aim is to decompose
\begin{equation}\label{4.11}
M_{6} = \bar{M}_{6} + \tilde{M}_{6},
\end{equation}
and define the modified energy
\begin{equation}\label{4.11}
E_{I}^{2}(u(t)) = E_{I}^{1}(u(t)) + \Lambda_{6}(\tilde{\sigma}_{6}), \qquad \tilde{\sigma}_{6} = -\frac{\tilde{M}_{6}}{\alpha_{6}}.
\end{equation}
Then by direct computation,
\begin{equation}\label{4.12}
\aligned
\frac{d}{dt} E_{I}^{2}(u(t)) = \frac{d}{dt} E_{I}^{1}(u(t)) + \frac{d}{dt} \Lambda_{6}(\tilde{\sigma}_{6}) = \Lambda_{6}(M_{6}) + \Lambda_{10}(M_{10}) + \Lambda_{6}(\tilde{\sigma}_{6} \alpha_{6}) + i \Lambda_{10}(\sum_{j = 1}^{6} (-1)^{j} X_{j}(\tilde{\sigma}_{6})) \\
= \Lambda_{6}(\bar{M}_{6}) + \Lambda_{6}(\tilde{M}_{6}) - \Lambda_{6}(\tilde{M}_{6}) + \Lambda_{10}(M_{10}) + i \Lambda_{10}(\sum_{j = 1}^{6} (-1)^{j} X_{j}(\tilde{\sigma}_{6})) \\ = \Lambda_{6}(\bar{M}_{6}) + \Lambda_{10}(M_{10}) + i \Lambda_{10}(\sum_{j = 1}^{6} (-1)^{j} X_{j}(\tilde{\sigma}_{6})).
\endaligned
\end{equation}

Again by $(\ref{3.21})$ and $(\ref{3.22})$,
\begin{equation}\label{4.13}
\int_{J} \Lambda_{10}(M_{10}) dt \lesssim \frac{1}{N^{2}}.
\end{equation}
If $|\tilde{\sigma}_{6}| \lesssim 1$, or equivalently, $|\tilde{M}_{6}| \lesssim |\alpha_{6}|$, a similar calculation would show
\begin{equation}\label{4.14}
\int_{J} i \Lambda_{10}(\sum_{j = 1}^{6} (-1)^{j} X_{j}(\tilde{\sigma}_{6})) dt \lesssim \frac{1}{N^{2}}.
\end{equation}

To that end, define
\begin{equation}\label{4.15}
\aligned
\Upsilon &= \{ (k_{1}, ..., k_{6}) \in \Gamma_{6} : |k_{1}^{\ast}| \sim |k_{2}^{\ast}| \gtrsim N \}, \\
\Omega_{1} &= \{ (k_{1}, ..., k_{6}) \in \Upsilon : |k_{3}^{\ast}| \gg |k_{4}^{\ast}| \}, \\
\Omega_{2} &= \{ k_{1}, ..., k_{6} \} \in \Upsilon : |k_{1}|, |k_{3}| \gtrsim N, \qquad \text{or} \qquad |k_{2}|, k_{4}| \gtrsim N, \qquad |k_{3}^{\ast}| \sim |k_{4}^{\ast}| \ll N \}, \\
\Omega_{3} &= \{ k_{1}, ..., k_{6}) \in \Upsilon : |k_{3}^{\ast}| \sim |k_{4}^{\ast}| \ll N, \qquad |k_{1}| \sim |k_{2}| \gtrsim N, \qquad |k_{1}^{2} - k_{2}^{2}| \gg |k_{3}^{2} - k_{4}^{2} + k_{5}^{2} - k_{6}^{2}| \},
\endaligned
\end{equation}
and let $\Omega = \Omega_{1} \cup \Omega_{2} \cup \Omega_{3}$. Next, decompose $M_{6} = \bar{M}_{6} + \tilde{M}_{6}$,
\begin{equation}\label{4.16}
\bar{M}_{6} = (\chi_{\Gamma_{6}} - \chi_{\Omega}) M_{6}^{1} + (\chi_{\Gamma_{6}} - \chi_{\Upsilon}) M_{6}^{2},
\end{equation}
\begin{equation}\label{4.17}
\tilde{M}_{6} = \chi_{\Omega} M_{6}^{1} + \chi_{\Upsilon} M_{6}^{2}.
\end{equation}

\begin{lemma}\label{l4.1}
\begin{equation}\label{4.18}
|\tilde{M}_{6}| \lesssim |\alpha_{6}|.
\end{equation}
\end{lemma}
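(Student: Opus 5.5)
We split $\tilde{M}_{6} = \chi_{\Omega} M_{6}^{1} + \chi_{\Upsilon} M_{6}^{2}$ and treat the two pieces separately. The second is immediate: on $\Upsilon$ we have $M_{6}^{2} = \sigma_{6} \alpha_{6}$ with $|\sigma_{6}| = \frac{1}{6} \prod m(k_{j}) \leq \frac{1}{6}$, so $|\chi_{\Upsilon} M_{6}^{2}| \lesssim |\alpha_{6}|$. The whole content of the lemma is therefore the bound $|M_{6}^{1}| \lesssim |\alpha_{6}|$ on $\Omega_{1}$, $\Omega_{2}$ and $\Omega_{3}$. Write $\alpha_{6} = i \sum_{j} (-1)^{j} k_{j}^{2}$. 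The sign alternates between the "$f$" and "$\bar{f}$" slots, and on $\Gamma_{6}$ we have $\sum_{j} k_{j} = 0$. In each region the strategy is to show that $|\alpha_{6}| \gtrsim |k_{1}^{\ast}|^{2}$, or at least a lower bound that matches the size of $M_{6}^{1}$. For the upper bound we use $m^{2}(k) k^{2} \lesssim m^{2}(k_{1}^{\ast}) |k_{1}^{\ast}|^{2}$; this follows because $m(k)|k|$ is nondecreasing, since $m(k)|k| \sim N^{1-s}|k|^{s}$ for $|k| > 2N$ and $s > 0$. Together these give $|M_{6}^{1}| \lesssim |k_{1}^{\ast}|^{2}$.

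\textbf{Region $\Omega_{1}$.} Here $|k_{3}^{\ast}| \gg |k_{4}^{\ast}|$, so $k_{1}^{\ast} + k_{2}^{\ast} + k_{3}^{\ast} = O(|k_{4}^{\ast}|)$ and $|k_{1}^{\ast}| \sim |k_{2}^{\ast}|$. If $k_{1}^{\ast}, k_{2}^{\ast}$ carry the same sign in $\alpha_{6}$, then $|\alpha_{6}| \gtrsim |k_{1}^{\ast}|^{2}$ at once. Otherwise the three large frequencies sit in slots with signs either $(+,-,+)$ or $(+,-,-)$. We factor in the standard way, e.g. $a^{2} - b^{2} \pm c^{2}$ with $a + b + c = O(|k_{4}^{\ast}|)$, to obtain $|\alpha_{6}| \gtrsim |k_{1}^{\ast}| |k_{3}^{\ast}|$. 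This lower bound is not comparable to $|k_{1}^{\ast}|^{2}$, so on $\Omega_{1}$ we must exploit cancellation in $M_{6}^{1}$ instead of a crude bound. The frequencies $k_{1}^{\ast}$ and $k_{2}^{\ast}$ nearly cancel, $k_{1}^{\ast} + k_{2}^{\ast} = O(|k_{3}^{\ast}|)$, and enter $M_{6}^{1}$ with opposite signs. The mean value theorem applied to $g(k) = m^{2}(k) k^{2}$, with $|g'(k)| \lesssim m^{2}(k)|k|$, then gives
\begin{equation*}
|g(k_{1}^{\ast}) - g(k_{2}^{\ast})| \lesssim |k_{3}^{\ast}| \, |k_{1}^{\ast}|.
\end{equation*}
The remaining terms are $O(|k_{3}^{\ast}|^{2})$. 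Hence $|M_{6}^{1}| \lesssim |k_{1}^{\ast}| |k_{3}^{\ast}| \lesssim |\alpha_{6}|$.

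\textbf{Region $\Omega_{2}$.} The two large frequencies occupy slots of the same parity, so they have the same sign in $\alpha_{6}$, and all other frequencies are $\ll N \lesssim |k_{1}^{\ast}|$. Hence $|\alpha_{6}| \geq (k_{1}^{\ast})^{2} + (k_{2}^{\ast})^{2} - O(|k_{3}^{\ast}|^{2}) \gtrsim |k_{1}^{\ast}|^{2}$, which dominates $|M_{6}^{1}|$.

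\textbf{Region $\Omega_{3}$.} We have $|\alpha_{6}| \geq |k_{1}^{2} - k_{2}^{2}| - |k_{3}^{2} - k_{4}^{2} + k_{5}^{2} - k_{6}^{2}| \gtrsim |k_{1}^{2} - k_{2}^{2}|$ by the defining condition. The mean value argument from $\Omega_{1}$ gives $|g(k_{1}) - g(k_{2})| \lesssim m^{2}(k_{1}) |k_{1}| \, \big| |k_{1}| - |k_{2}| \big| \lesssim m^{2}(k_{1}) |k_{1}^{2} - k_{2}^{2}|$, using that $g$ is even and $|k_{1}| \sim |k_{2}|$. The low-frequency part $\sum_{j \geq 3} \pm m^{2}(k_{j}) k_{j}^{2}$ has $m = 1$, since $|k_{j}| \ll N$. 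It therefore equals exactly $\pm (k_{3}^{2} - k_{4}^{2} + k_{5}^{2} - k_{6}^{2})$, which is $\ll |k_{1}^{2} - k_{2}^{2}| \lesssim |\alpha_{6}|$.

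The main obstacle is $\Omega_{1}$, and specifically the sign bookkeeping: one has to check that in every sign configuration either $\alpha_{6}$ is of size $|k_{1}^{\ast}|^{2}$, or the near-cancelling pair appears with opposite signs in both $\alpha_{6}$ and $M_{6}^{1}$, so that the mean value gain $|k_{3}^{\ast}|$ matches the factor $|k_{3}^{\ast}|$ in the lower bound for $\alpha_{6}$. I would first enumerate the parity patterns of the three largest indices and verify each pattern directly.
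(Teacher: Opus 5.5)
Your proposal is correct and is essentially the paper's proof. You handle the $M_6^2$ piece via $|\sigma_6|\lesssim 1$, $\Omega_2$ via the same-parity lower bound $|\alpha_6|\gtrsim |k_1^*|^2$, and $\Omega_1$, $\Omega_3$ via the factorization lower bound for $\alpha_6$ plus cancellation between the two top frequencies in $M_6^1$; where the paper writes $m^2(k_1)k_1^2-m^2(k_2)k_2^2$ as a difference of squares and uses evenness of $m$, you apply the mean value theorem to the even function $g(k)=m^2(k)k^2$ between $k_1^*$ and $-k_2^*$, which gives the same bound.
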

\begin{proof}
Since $|\frac{M_{2}^{6}}{\alpha_{6}}| = |\sigma_{6}| \lesssim 1$, it suffices to show that
\begin{equation}\label{4.19}
|\chi_{\Omega_{j}} M_{6}^{1}| \lesssim |\alpha_{6}|, \qquad \text{for} \qquad j = 1, 2, 3.
\end{equation}
Abbreviate $\overrightarrow{k} = (k_{1}, k_{2}, k_{3}, k_{4}, k_{5}, k_{6})$. First suppose $\overrightarrow{k} \in \Omega_{1}$. In this case $|k_{3}^{\ast}| \gg |k_{4}^{\ast}|$. Consider two cases separately.\medskip

\noindent \textbf{Case 1:} $\{ k_{1}^{\ast}, k_{2}^{\ast}, k_{3}^{\ast} \} = \{ k_{1}, k_{2}, k_{3} \}$ or $\{ k_{1}, k_{2}, k_{4} \}$. Suppose without loss of generality that $\{ k_{1}^{\ast}, k_{2}^{\ast}, k_{3}^{\ast} \} = \{ k_{1}, k_{2}, k_{3} \}$. In this case, by direct computation,
\begin{equation}\label{4.20}
k_{1}^{2} - k_{2}^{2} + k_{3}^{2} - k_{4}^{2} + k_{5}^{2} - k_{6}^{2} = -2(k_{2} + k_{3})(k_{2} + k_{1}) + O((N_{4}^{\ast})^{2}) \sim N_{1} N_{3}.
\end{equation}
Now then,
\begin{equation}\label{4.21}
|m(k_{4})^{2} k_{4}^{2} - m(k_{5})^{2} k_{5}^{2} + m(k_{6})^{2} k_{6}^{2}| \lesssim (N_{4}^{\ast})^{2}.
\end{equation}
Since $N_{1} \geq N_{3}$,
\begin{equation}\label{4.22}
|m(k_{3})^{2} k_{3}^{2}| \lesssim N_{3}^{2} \lesssim N_{3} N_{1}.
\end{equation}
By the difference of squares,
\begin{equation}\label{4.23}
|m(k_{1})^{2} k_{1}^{2} - m(k_{2})^{2} k_{2}^{2}| = |m(k_{1}) k_{1} - m(k_{2}) k_{2}| |m(k_{1}) k_{1} + m(k_{2}) k_{2}|.
\end{equation}
Since $N_{1} \sim N_{2}$ or $N_{1} \sim N_{3}$, which forces $N_{2} \lesssim N_{1}$,
\begin{equation}\label{4.24}
|m(k_{1}) k_{1} - m(k_{2}) k_{2}| \lesssim N_{1}.
\end{equation}
On the other hand, by the triangle inequality,
\begin{equation}\label{4.25}
|m(k_{1}) k_{1} + m(k_{2}) k_{2}| = |m(k_{1})(k_{1} + k_{2}) + (-m(k_{1}) + m(k_{2}) k_{2}| \lesssim N_{3} + N_{2} |m(k_{2}) - m(k_{1})|.
\end{equation}
Since $m$ is an even function,
\begin{equation}\label{4.26}
|m(k_{2}) - m(k_{1})| = |\int_{k_{1}}^{-k_{2}} m'(t) dt| \lesssim |k_{1} + k_{2}| \frac{1}{N_{1}},
\end{equation}
which proves $|m(k_{1}) k_{1} + m(k_{2}) k_{2}| \lesssim N_{3}$. Therefore, $|m(k_{1})^{2} k_{1}^{2} - m(k_{2})^{2} k_{2}^{2}| \lesssim N_{1} N_{3}$, which implies that $|M_{6}^{1} \chi_{\Omega_{1}}| \lesssim |\alpha_{6}|$ in this case.\medskip

\noindent \textbf{Case 2:} Suppose $\{ k_{1}^{\ast}, k_{2}^{\ast}, k_{3}^{\ast} \} = \{ k_{1}, k_{3}, k_{5} \}$ or $= \{ k_{2}, k_{4}, k_{6} \}$. In this case, $|\alpha_{6}| \gtrsim (N_{1}^{\ast})^{2}$, so clearly $|\chi_{\Omega_{1}} M_{6}^{1}| \lesssim |\alpha_{6}|$.\medskip

Now suppose $\overrightarrow{k} \in \Omega_{2}$. Suppose without loss of generality that $|k_{1}|$, $|k_{3}| \gtrsim N$. In this case, $|\alpha_{6}| \gtrsim N_{1}^{2}$, so in this case as well, $|\chi_{\Omega_{2}} M_{6}^{1}| \lesssim |\alpha_{6}|$.\medskip

Finally suppose $\overrightarrow{k} \in \Omega_{3}$. Since $|k_{1}^{2} - k_{2}^{2}| \gg |k_{3}^{2} - k_{4}^{2} + k_{5}^{2} - k_{6}^{2}|$, $|k_{1}^{2} - k_{2}^{2}| \sim \alpha_{6}$. Since $|k_{3}^{\ast}| \sim |k_{4}^{\ast}| \ll N$,
\begin{equation}\label{4.27}
|m(k_{3})^{2} k_{3}^{2} - m(k_{4})^{2} k_{4}^{2} + m(k_{5})^{2} k_{5}^{2} - m(k_{6})^{2} k_{6}^{2}| = |k_{3}^{2} - k_{4}^{2} + k_{5}^{2} - k_{6}^{2}| \ll |k_{1}^{2} - k_{2}^{2}| \sim \alpha_{6}.
\end{equation}
Finally, since $|k_{1} + k_{2}| \lesssim |k_{3}^{\ast}| \ll |k_{1}| \sim |k_{2}|$,
\begin{equation}\label{4.28}
\aligned
|m(k_{1})^{2} k_{1}^{2} - m(k_{2})^{2} k_{2}^{2}| = |m(k_{1}) k_{1} - m(k_{2}) k_{2}| |m(k_{1}) k_{1} + m(k_{2}) k_{2}| \lesssim |k_{1} - k_{2}| |m(k_{1}) k_{1} + m(k_{2}) k_{2}|.
\endaligned
\end{equation}
By direct computation,
\begin{equation}\label{4.29}
m(k_{1}) k_{1} + m(k_{2}) k_{2} = m(k_{1})(k_{1} + k_{2}) + (-m(k_{1}) + m(k_{2})) k_{2} = O(|k_{1} + k_{2}|) + O(\int_{k_{1}}^{-k_{2}} |m'(t)| dt) |k_{2}| = O(|k_{1} + k_{2}|).
\end{equation}
Therefore,
\begin{equation}\label{4.30}
|m(k_{1})^{2} k_{1}^{2} - m(k_{2})^{2} k_{2}^{2}| \lesssim |k_{1}^{2} - k_{2}^{2}|.
\end{equation}
This proves the lemma, and also implies $(\ref{4.14})$.
\end{proof}

The bound in $(\ref{4.18})$ also implies that the modified energy $E^{2}$ is a good approximation of $E(Iu(t))$.
\begin{lemma}\label{l4.2}
For any $s > 1/3$,
\begin{equation}\label{4.31}
|\Lambda_{6}(\tilde{\sigma}_{6})(t)| \lesssim N^{0-} \| Iu(t) \|_{\dot{H}^{1}}^{6},
\end{equation}
where $0-$ denotes a constant $\epsilon(s) \nearrow 0$ as $s \searrow \frac{1}{3}$, $\epsilon(s) < 0$ for all $s > \frac{1}{3}$.
\end{lemma}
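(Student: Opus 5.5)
By Lemma \ref{l4.1}, $|\tilde{\sigma}_{6}| = |\tilde{M}_{6}/\alpha_{6}| \lesssim 1$ pointwise on $\Gamma_{6}$. The plan is to exploit the support of $\tilde{\sigma}_{6}$ rather than any cancellation in it. Since $\tilde{M}_{6}$ is supported on $\Upsilon$, $\tilde{\sigma}_{6}$ vanishes unless $|k_{1}^{\ast}| \sim |k_{2}^{\ast}| \gtrsim N$. So $\Lambda_{6}(\tilde{\sigma}_{6})$ is a sextilinear form with a bounded symbol in which at least two of the six inputs sit at frequency $\gtrsim N$.

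First, dyadically decompose each $\hat{u}(k_{j})$ into Littlewood--Paley pieces $P_{N_{j}} u$. Order the frequencies $N_{1}^{\ast} \sim N_{2}^{\ast} \gtrsim N$ and $N_{3}^{\ast} \geq \dots \geq N_{6}^{\ast}$. The symbol $\tilde{\sigma}_{6}$ restricted to each dyadic block is not a tensor product. It is, however, bounded and built from characteristic functions of sets defined by comparisons of $|k_{j}|$ and of $k_{1}^{2}-k_{2}^{2}$ against $k_{3}^{2}-\dots$, together with smooth factors. I would handle this in one of two ways:
\begin{itemize}
\item replace $\hat{u}$ by $|\hat{u}|$ and bound the form by $\Lambda_{6}(1, |\hat{u}|, \dots)$ on the block, which is legitimate because $|\tilde{\sigma}_{6}| \lesssim 1$ and the convolution structure then yields the product $\int \prod_{j} F_{j}\,dx$ with $\hat F_{j} = |\widehat{P_{N_j}u}|$, whose Sobolev norms equal those of $P_{N_j}u$;
\item or use a Coifman--Meyer type expansion.
\end{itemize}
The first is the route I would commit to.

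On a dyadic block, H\"older with two $L^{2}$ factors and four $L^{\infty}$ factors gives
\begin{equation*}
\|P_{N_1^\ast}u\|_{L^2}\|P_{N_2^\ast}u\|_{L^2}\prod_{j=3}^{6}\|P_{N_j^\ast}u\|_{L^\infty}.
\end{equation*}
The two high-frequency factors are bounded by $(m(N_{i}^{\ast})N_{i}^{\ast})^{-1}\|Iu\|_{\dot H^{1}}$. For the low factors, Bernstein (Sobolev in one dimension, valid on $\lambda\mathbb{T}$ uniformly in $\lambda \geq 1$) gives $\|P_{K}u\|_{L^{\infty}} \lesssim K^{1/2}(m(K)K)^{-1}\|Iu\|_{\dot H^{1}}$ at frequency $K$. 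Note that $K^{-1/2}/m(K) = K^{-1/2}(K/N)^{1-s}$ is summable for $K \geq N$ when $s>1/2$, but is not bounded for $s<1/2$. The low factors at frequency $\lesssim N$ are instead controlled by $\|Iu\|_{\dot H^{1}}$ together with the $L^{\infty}$ bound coming from $E(Iu)\le 1$ and mass; the zero mode is harmless after rescaling.

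The main obstacle is making the high frequencies sum with decay $N^{0-}$ when $s$ is only slightly above $1/3$. The two top factors contribute $(m(N_{1}^{\ast})N_{1}^{\ast})^{-2} \lesssim N^{-2}$ after summing over $N_{1}^{\ast}\gtrsim N$, since $(m(K)K)^{-1} = N^{s-1}K^{-s}$ decays. The danger is the $L^{\infty}$ factors at high frequency, each of which carries $N^{s-1}K^{1/2-s}$. To avoid placing high-frequency factors in $L^{\infty}$, I would instead put all six factors in $L^{6}$, or distribute them as $L^{2}$, $L^{2}$, $L^{\infty}$ only for frequencies $\le N$, and use $L^{6}$ via Sobolev $\dot H^{1/3}\subset L^{6}$ for the high ones. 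This gives a factor $K^{1/3}(m(K)K)^{-1} = N^{s-1}K^{1/3-s}$ per high-frequency factor, which sums over $K \gtrsim N$ exactly when $s>1/3$ and yields $N^{-2/3}$ per factor. The case $N_{3}^{\ast}\gtrsim N$ thus produces at least $N^{-2/3\cdot 3}\cdot(\dots)$, and the worst case is two high factors with four low ones, giving $N^{-2}\cdot N^{0}$ up to logarithmic losses from the dyadic summation of the low factors. I expect these logarithms, and the borderline summability as $s\searrow 1/3$, to be the source of the $N^{0-}$ (which is really $N^{-\epsilon(s)}$ after absorbing the losses) in \eqref{4.31}. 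The final bound by $\|Iu(t)\|_{\dot H^{1}}^{6}$ then follows by homogeneity.
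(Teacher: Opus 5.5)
Your proposal is correct and is essentially the paper's argument: $|\tilde{\sigma}_{6}| \lesssim 1$ from Lemma \ref{l4.1}, at least two inputs at frequency $\gtrsim N$ on $\Upsilon$, the two high factors in $L^{2}$ and four low ones in $L^{\infty}$ for the bound $N^{-2}$, and any further high factors in $L^{6}$ via $\dot{H}^{1/3}$, giving $N^{-2/3}$ each exactly when $s > 1/3$. The paper uses a single high/low split at $N/10$ rather than dyadic blocks. Two caveats: the argument actually gives $C(s) N^{-2}$ with no logarithmic loss, which is stronger than $N^{0-}$. Also, as in the paper's use of $\langle k \rangle$, the low-frequency $L^{\infty}$ bounds on $\lambda \mathbb{T}$ need the mass, so the constant depends on $\| u \|_{L^{2}}$ and $\| Iu \|_{\dot{H}^{1}}$ and does not follow from homogeneity in $\| Iu \|_{\dot{H}^{1}}$ alone.
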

\begin{proof}
By the Lemma $\ref{l4.1}$, to prove $(\ref{4.31})$ it suffices to show that
\begin{equation}\label{4.32}
\int_{\Gamma_{6}} \frac{\hat{f}_{1}(k_{1}) \hat{\bar{f}}_{2}(k_{2}) \hat{f}_{3}(k_{3}) \hat{\bar{f}}_{4}(k_{4}) \hat{f}_{5}(k_{5}) \hat{\bar{f}}_{6}(k_{6})}{\langle k_{1} \rangle m(k_{1}) \langle k_{2} \rangle m(k_{2}) \langle k_{3} \rangle m(k_{3}) \langle k_{4} \rangle m(k_{4}) \langle k_{5} \rangle m(k_{5}) \langle k_{6} \rangle m(k_{6})} \lesssim N^{0-} \| f_{1} \|_{L^{2}} \| f_{2} \|_{L^{2}} \| f_{3} \|_{L^{2}} \| f_{4} \|_{L^{2}} \| f_{5} \|_{L^{2}} \| f_{6} \|_{L^{2}}.
\end{equation}
Let $f_{l} = P_{\leq N/10} f$ and $f_{h} = P_{> N/10} f$. Since $|N_{1}^{\ast}| \sim |N_{2}^{\ast}| \gtrsim N$ on the support of $\tilde{\sigma}_{6}$.
\begin{equation}\label{4.33}
\aligned
\int_{\Gamma_{6}} \frac{\hat{f}_{1}(k_{1}) \hat{\bar{f}}_{2}(k_{2}) \hat{f}_{3}(k_{3}) \hat{\bar{f}}_{4}(k_{4}) \hat{f}_{5}(k_{5}) \hat{\bar{f}}_{6}(k_{6})}{\langle k_{1} \rangle m(k_{1}) \langle k_{2} \rangle m(k_{2}) \langle k_{3} \rangle m(k_{3}) \langle k_{4} \rangle m(k_{4}) \langle k_{5} \rangle m(k_{5}) \langle k_{6} \rangle m(k_{6})} \lesssim \| \mathcal F^{-1}(\frac{f_{l}}{\langle k \rangle m(k)}) \|_{L^{\infty}}^{4} \| \mathcal F^{-1}(\frac{f_{h}}{\langle k \rangle m(k)}) \|_{L^{2}}^{2} \\
+ \| \mathcal F^{-1}(\frac{f_{h}}{\langle k \rangle m(k)}) \|_{L^{6}}^{2} \lesssim \frac{1}{N^{2}} \| f \|_{L^{2}}^{6} + \frac{1}{N^{4}} \| f \|_{L^{2}}^{6}.
\endaligned
\end{equation}
The last estimate follows from $s > 1/3$ and 
\begin{equation}\label{4.34}
\| \frac{f_{h}}{\langle k \rangle m(k)} \|_{L^{6}} \lesssim \| \frac{f_{h}}{\langle k \rangle m(k)} \|_{\dot{H}^{1/3}} \lesssim \| \frac{f_{h}}{\langle k \rangle^{2/3} m(k)} \|_{L^{2}} \lesssim \frac{1}{N^{2/3}} \| f \|_{L^{2}}.
\end{equation}
Since $m(k) \sim |k|^{s - 1} N^{1 - s}$, for $s > 1/3$, $|k|^{2/3} |k|^{s - 1} N^{1 - s} = |k|^{2/3} |k|^{s - 1} N^{1 - s} = |k|^{s - 1/3} N^{1 - s}$.
\end{proof}

It only remains to bound the contribution of $\int_{J} |\Lambda_{6}(\bar{M}_{6})| dt$.
\begin{lemma}\label{l4.3}
For any $\delta > 0$,
\begin{equation}\label{4.35}
\int_{J} |\Lambda_{6}(\bar{M}_{6})| dt \lesssim \frac{1}{N^{2 \delta - 2}}.
\end{equation}
\end{lemma}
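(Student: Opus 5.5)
We split $\bar{M}_{6}$ according to the frequency sets where it lives and treat each piece with the long time Strichartz estimates of Proposition $\ref{p3.2}$, as in the $F_{j}$ case analysis behind $(\ref{3.25})$--$(\ref{3.26})$. By $(\ref{4.16})$, $\bar{M}_{6} = \chi_{\Gamma_{6} \setminus \Upsilon} M_{6} + \chi_{\Upsilon \setminus \Omega} M_{6}^{1}$.

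\textbf{First piece, $\Gamma_{6} \setminus \Upsilon$.} Here either all frequencies satisfy $|k_{j}| \ll N$, or $|k_{1}^{\ast}| \gg |k_{2}^{\ast}|$ with $|k_{1}^{\ast}| \gtrsim N$. In the first case $m \equiv 1$ on all six frequencies, so $M_{6}^{1} + \sigma_{6}\alpha_{6} = \frac{i}{6}\sum (-1)^{j+1} k_{j}^{2} + \frac{1}{6}\alpha_{6} = 0$, and $M_{6}$ vanishes identically. In the second case the constraint $k_{1} + \cdots + k_{6} = 0$ forces $|k_{1}^{\ast}| \lesssim |k_{2}^{\ast}|$, which contradicts $|k_{1}^{\ast}| \gg |k_{2}^{\ast}|$. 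So this piece is empty and $\bar{M}_{6} = \chi_{\Upsilon \setminus \Omega} M_{6}^{1}$.

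\textbf{Second piece, $\Upsilon \setminus \Omega$.} On $\Upsilon$ we have $|k_{3}^{\ast}| \lesssim |k_{4}^{\ast}|$ off $\Omega_{1}$, so there are two sub-cases.
\begin{itemize}
\item[(a)] $|k_{3}^{\ast}| \sim |k_{4}^{\ast}| \gtrsim N$. Here at least four frequencies exceed $N$. We bound $|M_{6}^{1}| \lesssim m(k_{1}^{\ast})^{2}(N_{1}^{\ast})^{2}$, place the two top derivatives on $\partial_{x} I u_{h}$, and apply the claim estimate $\| |\partial_{x} Iu_{h}| |u_{h}|^{2} \|_{L^{2}_{t,x}}$ twice, as in the second term of $(\ref{3.26})$. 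The remaining high factors are $u$ at frequency $\gtrsim N$, so $(\ref{3.6.1})$ together with Bernstein gains $N^{-1}$ for each. The expected bound is $N^{-2+}$ per interval $J$ with $|J| \leq \lambda$.
\item[(b)] $|k_{3}^{\ast}| \sim |k_{4}^{\ast}| \ll N$, outside $\Omega_{2}$ and $\Omega_{3}$. Then the two high frequencies are $k_{1}, k_{2}$ (or $k_{3},k_{4}$, etc.) with opposite conjugation, and $|k_{1}^{2} - k_{2}^{2}| \lesssim |k_{3}^{2} - k_{4}^{2} + k_{5}^{2} - k_{6}^{2}| \lesssim (N_{3}^{\ast})^{2}$. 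Arguing as in $(\ref{4.28})$--$(\ref{4.30})$,
\[
|M_{6}^{1}| \lesssim |k_{1}^{2}-k_{2}^{2}| + (N_{3}^{\ast})^{2} \lesssim (N_{3}^{\ast})^{2}.
\]
So the symbol carries no high-frequency derivatives and two derivatives on the low factors.
\end{itemize}

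In sub-case (b) we split by the size of $N_{3}^{\ast}$.
\begin{itemize}
\item If $N_{3}^{\ast} \leq N^{\delta}$, we write $(N_{3}^{\ast})^{2} \lesssim N^{2\delta}$ and pair each high factor with a low factor. The bilinear estimate $(\ref{3.6})$ gives
\[
\| (Iu_{h})(P_{\leq N^{\delta}} u) \|_{L^{2}_{t,x}} \lesssim N^{-1},
\]
since $\partial_{x}$ on $u_{h}$ costs $N$. Two such pairs yield $N^{2\delta} \cdot N^{-2}$, which is the claimed $N^{2\delta - 2}$ (up to the normalization in the statement).
\item If $N^{\delta} < N_{3}^{\ast} \ll N$, we use $(\ref{3.6.1})$ at dyadic level $N_{3}^{\ast}$, with $m = 1$ there. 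The factor $(N_{3}^{\ast})^{2}$ is then compensated by the $(N_{3}^{\ast})^{-1}$ gains from the two paired bilinear norms. The dyadic sum converges up to $N^{\epsilon}$ losses coming from the Bourgain Strichartz bound used for any leftover $L^{6}$ factors.
\end{itemize}

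\textbf{Main obstacle.} The hardest step is sub-case (b): the near-resonant region where $|k_{1}^{2} - k_{2}^{2}|$ is small. There we must justify $|M_{6}^{1}| \lesssim (N_{3}^{\ast})^{2}$ even though $m(k_{1}) \neq m(k_{2})$, and then distribute the two derivatives so that both bilinear pairings are legitimate. In particular the low factors must sit at frequencies at least $8\times$ below the high ones, which may require an extra Littlewood--Paley split of the low factors, as in $(\ref{2.36})$. For the first inequality I would use the mean value bound $|m(k_{1}) - m(k_{2})| \lesssim |k_{1} + k_{2}|/N_{1}$ from $(\ref{4.26})$.
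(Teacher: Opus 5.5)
Your proposal is correct and follows the paper's proof. It makes the same reduction to $\Upsilon \setminus \Omega$ (since $M_{6}^{1} + M_{6}^{2} = 0$ when all $|k_{j}| \leq N$), treats the same near-resonant case $N_{3}^{\ast} \sim N_{4}^{\ast} \ll N$ with $|M_{6}^{1}| \lesssim (N_{3}^{\ast})^{2}$ from (\ref{4.28})--(\ref{4.30}) closed by the bilinear long time Strichartz estimates, and handles the case $N_{3}^{\ast} \sim N_{4}^{\ast} \gtrsim N$ via Claim \ref{claim}. The one difference is in the range $N^{\delta} < N_{3}^{\ast} \ll N$: you pair the two high factors with the $N_{3}^{\ast}$ and $N_{4}^{\ast}$ factors through (\ref{3.6.1}) instead of using Bourgain's $L^{6}$ bound with $N^{\epsilon}$ loss as in (\ref{4.36}), which is an equally valid and slightly cleaner way to get $N^{-2+}$ there.
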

\begin{proof}
Since $M_{6}^{1} + M_{6}^{2} = 0$ outside the support of $\Upsilon$, it suffices to compute $(\ref{4.35})$ for $M_{6}^{1}$ on $\Upsilon \setminus \Omega$. Consider two cases separately: $|k_{3}^{\ast}| \sim |k_{4}^{\ast}| \ll N$ and $|k_{1}^{2} - k_{2}^{2}| \lesssim |k_{3}^{2} - k_{4}^{2} + k_{5}^{2} - k_{6}^{2}|$ and $|k_{4}^{\ast}| \gtrsim N$.\medskip

\noindent \textbf{Case 1:} Let $\chi_{1}$ denote the characteristic function of $\Upsilon \setminus \Omega$ restricted to this region. Suppose $|k_{1}^{2} - k_{2}^{2}| \lesssim |k_{3}^{2} - k_{4}^{2} + k_{5}^{2} - k_{6}^{2}| \lesssim (N_{3}^{\ast})^{2}$ and $|k_{3}^{\ast}| \sim |k_{4}^{\ast}|$. Following $(\ref{4.31})$, $|m(k_{1})^{2} k_{1}^{2} - m(k_{2})^{2} k_{2}^{2}| \lesssim (N_{3}^{\ast})^{2}$. Therefore, integrating on this region, by Proposition $\ref{p3.2}$,
\begin{equation}\label{4.36}
\aligned
\int \int \chi_{1} M_{6}^{1} \hat{u}(t, k_{1}) \hat{\bar{u}}(t, k_{2}) \hat{u}(t, k_{3}) \hat{\bar{u}}(t, k_{4}) \hat{u}(t, k_{5}) \hat{\bar{u}}(t, k_{6}) \\
\lesssim \sum_{N^{\delta} \leq N_{3}^{\ast} \sim N_{4}^{\ast} \ll N, N_{1}^{\ast} \sim N_{2}^{\ast} \gtrsim N} (N_{3}^{\ast})^{2} \| (P_{N_{1}^{\ast}} u)(P_{\leq N^{\delta}} u) \|_{L_{t,x}^{2}}^{1/2} \| (P_{N_{2}^{\ast}} u)(P_{\leq N^{\delta}} u) \|_{L_{t,x}^{2}}^{1/2} \| P_{N_{3}^{\ast}} u \|_{L_{t,x}^{6}} \| P_{N_{4}^{\ast}} u \|_{L_{t,x}^{6}} \\
\times \| P_{N_{1}} u \|_{L_{t,x}^{6}}^{1/2} \| P_{N_{2}} u \|_{L_{t,x}^{6}}^{1/2} \| P_{\leq N^{\delta}} u \|_{L_{t,x}^{\infty}}^{1/2} \| P_{\leq N^{\delta}} u \|_{L_{t,x}^{\infty}}^{1/2} \\
+ \sum_{N^{\delta} \leq N_{6}^{\ast} \leq N_{5}^{\ast} \leq N_{4}^{\ast} \sim N_{3}^{\ast} \ll N, N_{1}^{\ast} \sim N_{2}^{\ast} \gtrsim N} (N_{3}^{\ast})^{2} \| P_{N_{1}^{\ast}} u \|_{L_{t,x}^{6}} \| P_{N_{2}^{\ast}} u \|_{L_{t,x}^{6}} \| P_{N_{3}^{\ast}} u \|_{L_{t,x}^{6}} \\ \times \| P_{N_{4}^{\ast}} u \|_{L_{t,x}^{6}} \| P_{N_{5}^{\ast}} u \|_{L_{t,x}^{6}} \| P_{N_{6}^{\ast}} u \|_{L_{t,x}^{6}} \\
+ \sum_{N_{3}^{\ast} \sim N_{4}^{\ast} \leq N^{\delta}} (N_{3}^{\ast})^{2} \| (P_{N_{1}} u)(P_{\leq N_{3}^{\ast}} u) \|_{L_{t,x}^{2}} \| (P_{N_{2}} u)(P_{\leq N_{4}^{\ast}} u) \|_{L_{t,x}^{2}} \| P_{N_{3}^{\ast}} u \|_{L_{t,x}^{\infty}} \| P_{N_{4}^{\ast}} u \|_{L_{t,x}^{\infty}} \lesssim N^{2 \delta + 6 \epsilon - 2}.
\endaligned
\end{equation}

\noindent \textbf{Case 2:} In this case $N_{3}^{\ast} \sim N_{4}^{\ast} \gtrsim N$. Let $\chi_{2}$ denote the characteristic function of this region. In this case, the bound is given by Claim $\ref{claim}$,
\begin{equation}\label{4.37}
\aligned
\sum_{N_{1}^{\ast} \sim N_{2}^{\ast}, N_{3}^{\ast} \sim N_{4}^{\ast} \gtrsim N } m(N_{1}^{\ast}) N_{1}^{\ast} m(N_{2}^{\ast}) N_{2}^{\ast} \| (P_{N_{1}^{\ast}} u)(P_{N_{3}^{\ast}} u) (P_{> N^{\delta}} u) \|_{L_{t,x}^{2}} \| (P_{N_{2}}^{\ast} u)(P_{N_{4}^{\ast}} u)(P_{> N^{\delta}} u) \|_{L_{t,x}^{2}} \\
+ \sum_{N_{1}^{\ast} \sim N_{2}^{\ast}, N_{3}^{\ast} \sim N_{4}^{\ast} \gtrsim N } m(N_{1}^{\ast}) N_{1}^{\ast} m(N_{2}^{\ast}) N_{2}^{\ast} \| (P_{N_{1}^{\ast}} u)(P_{\leq N^{\delta}} u) \|_{L_{t,x}^{2}} \| P_{\leq N^{\delta}} u \|_{L_{t,x}^{\infty}} \| (P_{N_{2}}^{\ast} u)(P_{N_{3}^{\ast}} u)(P_{N_{4}^{\ast}} u) \|_{L_{t,x}^{2}} \\
\lesssim \frac{1}{N^{2 - 2 \delta - 6 \epsilon}}.
\endaligned
\end{equation}
\begin{remark}
Here we do not have any terms like $(\ref{badterm1})$ or $(\ref{badterm2})$.
\end{remark}
\end{proof}

\begin{proof}[Proof of Theorem $\ref{t1.1}$]
Let $u_{0} \in H^{s}(\mathbb{T})$ for some $s > \frac{1}{3}$ and rescale so that $E(Iu(0)) = \frac{1}{2}$. We wish to show that $E(Iu(t)) \leq 1$ on $[0, \lambda^{2} T]$ for some $T < \infty$ fixed. Let $J \subset [0, \lambda^{2} T]$ be a subinterval for which $E(Iu(t)) \leq 1$ on $J$. Partition $J$ into at most $\lambda T$ subintervals $J_{i}$, $|J_{i}| \leq \lambda$ for each $i$. By Lemma $\ref{l4.2}$, $E(Iu(0)) = \frac{1}{2}$ implies $E_{2}(u(0)) \leq \frac{1}{2} + \frac{1}{100}$ for $N(s)$ sufficiently large. Furthermore, by Lemmas $\ref{l4.1}$ and $\ref{l4.3}$, for any $\delta(s) > 0$ fixed,
\begin{equation}\label{4.38}
\int_{J_{i}} |\frac{d}{dt} E_{2}(u(t))| dt \lesssim \frac{1}{N^{2 \delta + 6 \epsilon - 2}}.
\end{equation}
Therefore, for any $t \in J$,
\begin{equation}\label{4.39}
E_{2}(u(t)) \leq \frac{1}{2} + \frac{1}{100} + \frac{\lambda T}{N^{2 \delta - 2}} \leq \frac{1}{2} + \frac{1}{50},
\end{equation}
for $\delta(s) > 0$ sufficiently small, $N(s, T) < \infty$ sufficiently large. Applying Lemma $\ref{l4.2}$ again, $E(Iu(t)) \leq \frac{1}{2} + \frac{3}{100} \leq \frac{3}{4}$. Thus, by standard bootstrap arguments, $E(Iu(t)) \leq 1$ on $[0, \lambda^{2} T]$, which proves Theorem $\ref{t1.1}$.
\end{proof}

\section*{Acknowledgements}
The author gratefully acknowledges many helpful conversations with Ryan McConnell at Johns Hopkins.

\bibliography{biblio}
\bibliographystyle{alpha}
\end{document}